\documentclass[pdflatex,sn-mathphys-num]{sn-jnl}% Math and Physical Sciences Numbered Reference Style
\usepackage{graphicx}%
\usepackage{multirow}%
\usepackage{amsmath,amssymb,amsfonts}%
\usepackage{amsthm}%
\usepackage{mathrsfs}%
\usepackage[title]{appendix}%
\usepackage{xcolor}%
\usepackage{textcomp}%
\usepackage{manyfoot}%
\usepackage{booktabs}%
\usepackage{algorithm}%
\usepackage{algorithmicx}%
\usepackage{algpseudocode}%
\usepackage{listings}%

\usepackage{graphicx}
\usepackage{subcaption}
\usepackage{array}
\usepackage{comment}

\usepackage{booktabs}
\usepackage{siunitx}
\theoremstyle{thmstyleone}%
\newtheorem{theorem}{Theorem}%  meant for continuous numbers
\newtheorem{assumption}[theorem]{Assumption}
\newtheorem{property}[theorem]{Property}
\newtheorem{lemma}[theorem]{Lemma}

\theoremstyle{thmstyletwo}%
\newtheorem{remark}{Remark}%

\theoremstyle{thmstylethree}%

\begin{document}

\title[Article Title]{LegONet: Plug-and-Play Structure-Preserving Neural Operator Blocks for Compositional PDE Learning}

%%=============================================================%%
%% GivenName	-> \fnm{Joergen W.}
%% Particle	-> \spfx{van der} -> surname prefix
%% FamilyName	-> \sur{Ploeg}
%% Suffix	-> \sfx{IV}
%% \author*[1,2]{\fnm{Joergen W.} \spfx{van der} \sur{Ploeg} 
%%  \sfx{IV}}\email{iauthor@gmail.com}
%%=============================================================%%

\author[1]{\fnm{Jiahao} \sur{Zhang}}
\author[1]{\fnm{Yueqi} \sur{Wang}}
\equalcont{These authors contributed equally to this work.}

\author*[1,2]{\fnm{Guang} \sur{Lin}}\email{guanglin@purdue.edu}

\affil[1]{\orgdiv{Department of Mathematics}, \orgname{Purdue University},
\orgaddress{\city{West Lafayette}, \state{IN}, \postcode{47907}, \country{USA}}}

\affil[2]{\orgdiv{School of Mechanical Engineering}, \orgname{Purdue University},
\orgaddress{\city{West Lafayette}, \state{IN}, \postcode{47907}, \country{USA}}}

%%==================================%%
%% Sample for unstructured abstract %%
%%==================================%%

\abstract{
Learned PDE solvers are often trained as monolithic surrogates for a specific equation, boundary condition and discretization. This makes them difficult to reuse when mechanisms change and it can limit stability under long-horizon rollout. We introduce Lego-like Operator Network (LegONet), a compositional framework that builds PDE solvers from plug-and-play, structure-preserving operator blocks defined on shared boundary-adapted spectral representations. LegONet separates boundary handling from mechanism learning, satisfying boundary conditions by construction. It also separates mechanism learning from time integration, enabling pretrained blocks to be assembled into new solvers without retraining. We also derive a finite-horizon error decomposition that separates block mismatch from splitting error and provides mechanism-level diagnostics for long-horizon predictions. Across ten time-dependent PDEs, LegONet delivers accurate closed-loop rollouts with improved stability under cross-PDE recombination and boundary reconfiguration. More broadly, this modular formulation suggests a path from task-specific neural solvers towards plug-and-play operator libraries for scientific computing.
}

\keywords{Operator Learning, Neural Operator, Structure-Preserving Learning, Operator Splitting, Compositional Modeling, Trajectory-Free Training}

%%\pacs[JEL Classification]{D8, H51}

%%\pacs[MSC Classification]{35A01, 65L10, 65L12, 65L20, 65L70}

\maketitle

%733 words

Scientific machine learning is reshaping how we model and simulate partial differential equations (PDEs), complementing
discretization-based solvers that remain the standard when accuracy and stability guarantees are required
\cite{leveque2007finite,strikwerda2004finite,hughes2003finite,brenner2008mathematical,boyd2001chebyshev}.
A key promise of learned solvers is amortization: once trained, they can accelerate repeated solves, design loops, and control tasks
\cite{raissi2019physics,karniadakis2021physics,kovachki2024operator}.
In practice, however, the main bottleneck is rarely a single forward evaluation. It is reconfiguration. Equations change as operators are added, removed or retuned. Boundary conditions vary across instances. Long-horizon rollout is often needed to resolve stiff, multiscale or turbulent regimes. Yet most learned PDE solvers remain difficult to reuse under such changes.

Two paradigms have shaped current practice.
Physics-informed neural networks (PINNs) impose PDE and boundary constraints through residual penalties and typically optimize per instance, which
limits reuse across operators and configurations \cite{raissi2019physics}.
Operator-learning architectures such as DeepONet and Fourier neural operator (FNO) learn mappings between function spaces from trajectory data
\cite{lu2019deeponet,li2020fourier,kovachki2023neural,pfaff2020learning,li2024physics,hao2023gnot,ye2024pdeformer}.
These models can deliver fast inference once trained, but their learned dynamics are usually encoded in a single end-to-end map.
Boundary handling, discretization and operator identity therefore become entangled, and plug-and-play reconfiguration is rare.
Moreover, reliable long-horizon rollout remains challenging when the learned vector field is not explicitly tied to mechanisms that control stability or invariants,
especially under stiffness or non-commuting multi-physics coupling \cite{wang2023long,sharma2023stiff,wang2024understanding}.

A separate line of work improves qualitative stability by hard-wiring structure into learned dynamics.
Energy-based and structure-preserving formulations parameterize scalar generators and recover vector fields through analytic relations,
including Hamiltonian and Lagrangian neural networks and their dissipative extensions
\cite{yu2018deep,greydanus2019hamiltonian,cranmer2020lagrangian,sosanya2022dissipative,desai2021port,roth2025stable,zhang2022gfinns,bouziani2024structure,eidnes2024pseudo}.
These models can encode conservation or dissipation more robustly than unconstrained time-steppers, but they are usually designed as monolithic surrogates for a specific system. As a result, they offer limited support for compositional reuse when operators are reconfigured, and they provide little guidance on whether rollout failures arise from a
particular mechanism approximation or from the time-integration scheme.

We introduce LegONet, a compositional framework for time-dependent PDEs that replaces monolithic neural solvers with a plug-and-play library of
structure-preserving operator blocks coupled through a shared coefficient representation (Fig.~\ref{fig:compare}).
Our central premise is simple: plug-and-play learned PDE solvers require an explicit layer between PDE specification and time integration. That layer should provide a boundary-compatible state representation that all blocks can share, together with single-purpose operator primitives that can be selected, recombined and diagnosed without retraining an entire model.

LegONet is built around two separations. The first separates boundary handling from mechanism learning. The second separates mechanism learning from time integration. We begin from a decomposition of the target PDE into a sum of mechanisms,
\begin{equation}\label{eq:intro_pde_sum}
u_t(\cdot,t)=\sum_{i=1}^{N_{\mathrm{blk}}} L_i\big(u(\cdot,t)\big),
\end{equation}
and represent the homogeneous component of the solution on a boundary-adapted spectral baseplate. For non-homogeneous boundary data, we apply a lifting $u=u_{\mathrm{lift}}+u_0$ so that $u_0$ satisfies homogeneous constraints, and approximate
\begin{equation}\label{eq:intro_coeff_rep}
u_0(\cdot,t)\approx\sum_{k=1}^{K} a_k(t)\,\phi_k^{(b)}(\cdot),
\qquad
\mathbf{a}(t):=[a_1(t),\cdots,a_K(t)]^\top\in\mathbb{R}^K.
\end{equation}
This construction fixes $\mathbf{a}(t)$ as a shared coefficient state on the chosen baseplate. All blocks act on this same state representation, while boundary conditions are enforced by construction through the basis $\{\phi_k^{(b)}\}_{k=1}^K$ and lifting $u_{\mathrm{lift}}$.

On this shared representation, LegONet implements each mechanism by a structured coefficient-space vector field induced from scalar generators and fixed
structure operators,
\begin{equation}\label{eq:intro_block_form}
F_i^{\boldsymbol{\theta}}(\mathbf{a})
=
-\,G_{i}\,\nabla_{\mathbf{a}} E_{i}^{a,\boldsymbol{\theta}}(\mathbf{a})
\;+\;
J_{i}\,\nabla_{\mathbf{a}} H_{i}^{a,\boldsymbol{\theta}}(\mathbf{a})
\;+\;
R_{i}^a(\mathbf{a}).
\end{equation}
Here $G_i$ and $J_i$ are baseplate-consistent operators that encode dissipative or conservative structure on the retained modes, while
$E_i^{a,\boldsymbol{\theta}}$ and $H_i^{a,\boldsymbol{\theta}}$ parameterize scalar generators. The residual term $R_i^a$ accommodates contributions outside the selected variational forms, such as explicit forcing or constraints.
This yields plug-and-play operator blocks for mechanisms such as diffusion, transport and reaction, all defined on the same coefficient state.

LegONet also separates training from deployment. Blocks are pretrained offline by instantaneous operator matching in coefficient space:
we sample admissible states $\mathbf{a}$, evaluate a trusted discretization to obtain reference targets, and learn each block so that
$F_i^{\boldsymbol{\theta}}(\mathbf{a})$ matches the reference mechanism update without trajectory fitting. At deployment, a new PDE instance is created by selecting a baseplate, choosing the relevant blocks and advancing the resulting reduced dynamics by symmetric Strang splitting.
Reconfiguring operators or boundary settings therefore becomes a selection-and-assembly problem rather than a retraining problem. This modular view also makes long-horizon behavior more interpretable: our finite-horizon analysis separates rollout error into block mismatch and splitting error, allowing failures to be attributed to the learned mechanisms or to the composition scheme.
Fig.~\ref{fig:workflow} summarizes the full workflow. More broadly, by exposing plug-and-play blocks on a common coefficient representation, LegONet suggests a path toward community-built libraries of interoperable operators that could mature into practical scientific-computing packages.

\section*{Results}\label{sec:Experiments}

LegONet is evaluated as a compositional solver assembled from pretrained mechanism blocks on a shared coefficient representation (Fig.~\ref{fig:workflow}).
Across all experiments, each target PDE is decomposed into a sum of mechanisms instantiated on a boundary-adapted baseplate and advanced by symmetric Strang composition in coefficient space.
Reference trajectories are generated on the same trial space, with the same projection operators and the same splitting schedule. The only difference is that exact operators are used in place of learned blocks. This design isolates the effect of block learning from differences in discretization or rollout protocol.

\begin{figure}[H]
  \centering

  \begin{subfigure}[t]{\linewidth}
    \centering
    \includegraphics[width=\linewidth,
    trim={0cm 0cm 0cm 0.0cm},
    clip]{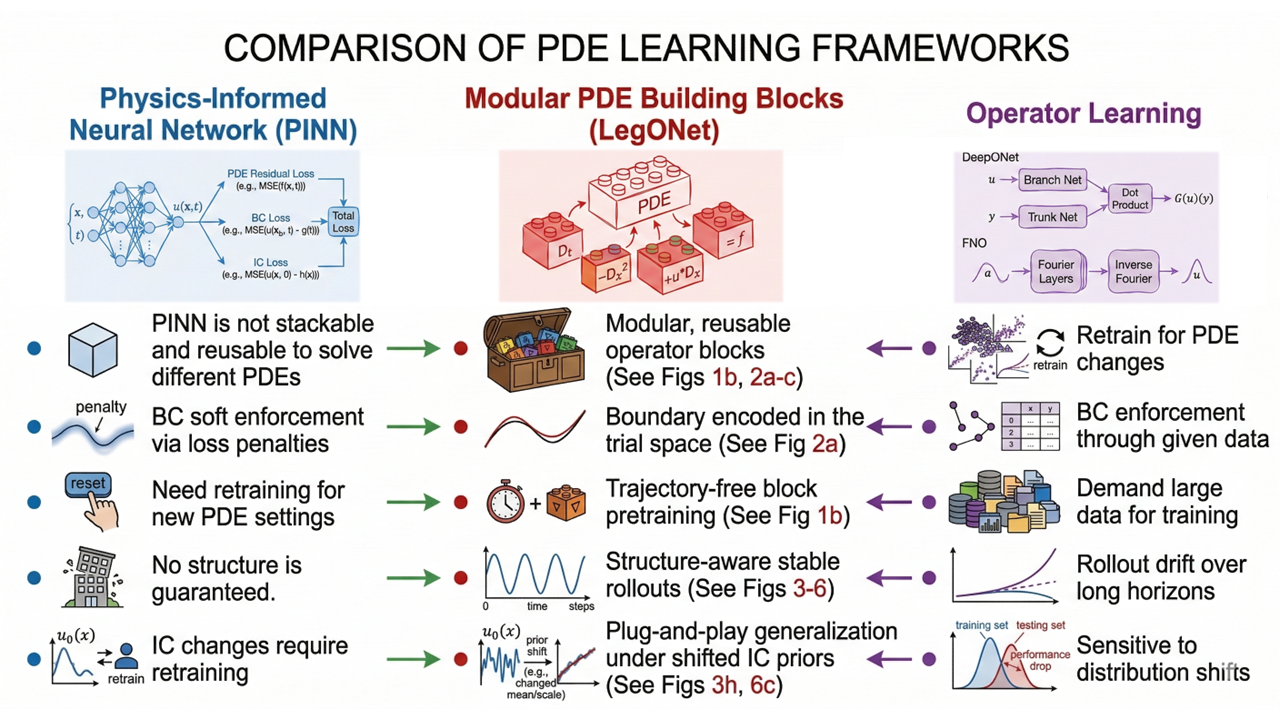}
    \caption{\textbf{Comparing neural PDE paradigms}: from monolithic solvers to plug-and-play modular PDE building blocks.
}
    \label{fig:compare}
  \end{subfigure}

  \vspace{6pt}

  \begin{subfigure}[t]{\linewidth}
    \centering
    \includegraphics[width=\linewidth]{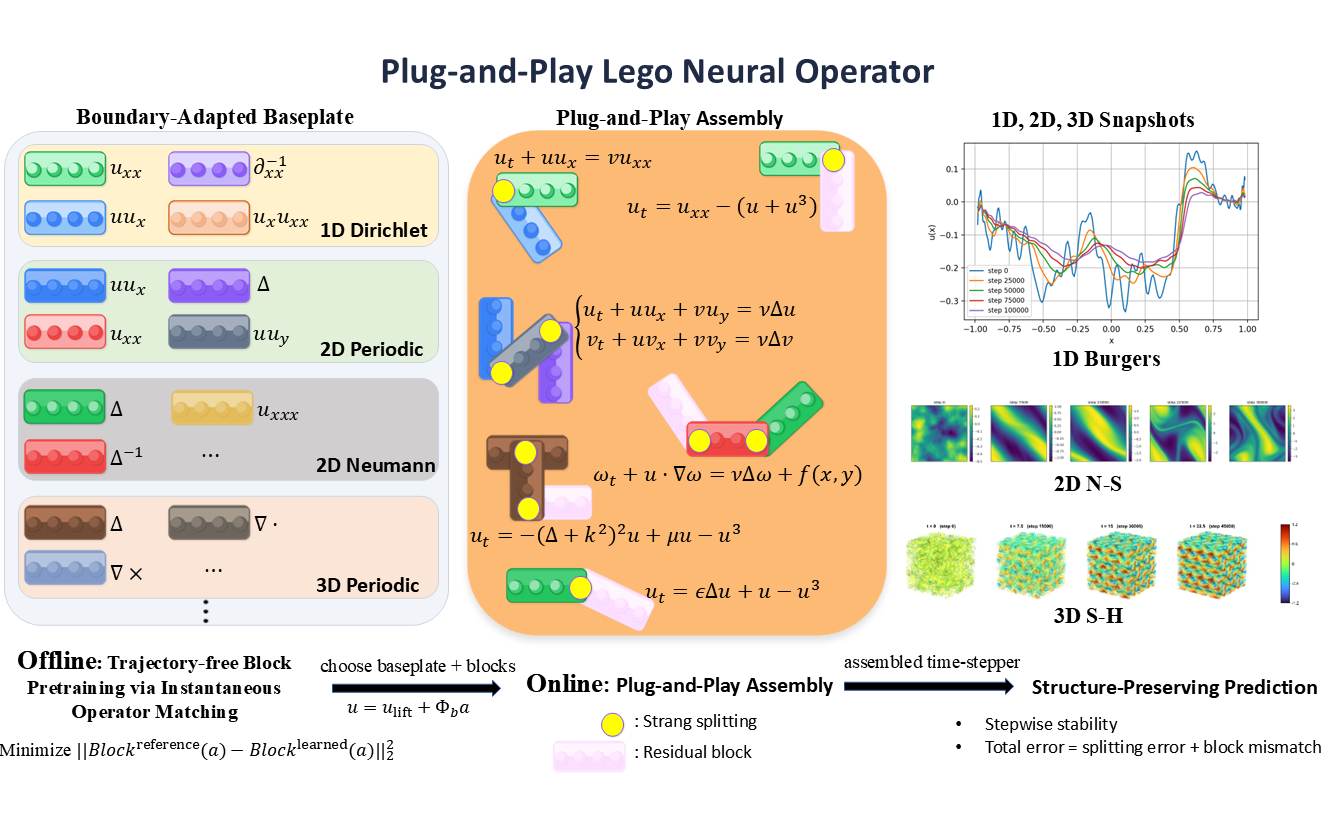}
    \caption{\textbf{LegONet pipeline}:
    blocks are pretrained offline by instantaneous operator matching on a boundary-adapted coefficient baseplate, then selected and composed at deployment to form a time-stepper for new PDE instances.}
    \label{fig:workflow}
  \end{subfigure}

  \caption{\textbf{From monolithic neural solvers to modular operator blocks.}
  Fig. \ref{fig:compare}: conceptual comparison of operator learning, physics-informed optimization and LegONet.
  Fig. \ref{fig:workflow}: LegONet workflow.}
  \label{fig:lego_compare_workflow}
\end{figure}

Trajectory accuracy is quantified on the common evaluation nodes $\{x_q,w_q\}_{q=1}^{Q}$ by the weighted relative $L^2$ error
\begin{equation}\label{eq:err_metrics}
\mathrm{rel}(\mathbf p,\mathbf t)
=\frac{\|\mathbf p-\mathbf t\|_{w,2}}{\|\mathbf t\|_{w,2}},
\qquad
\|\mathbf u\|_{w,2}^2
=\sum_{q=1}^{Q} w_q\,|u_q|^2,
\end{equation}
together with the normalized pointwise error profile
$e(x_q)=(p_q-t_q)/\|\mathbf t\|_{w,2}$,
where $\mathbf p=(p_q)_{q=1}^{Q}$ and $\mathbf t=(t_q)_{q=1}^{Q}$ are the predicted and reference values evaluated at the same nodes.
Here and below, non-bold symbols such as $u(x,t)$ and $v(x,t)$ denote continuous fields, whereas bold symbols such as $\mathbf u(t)$ and $\mathbf v(t)$ denote their nodal evaluations on the common quadrature/grid nodes.
Beyond trajectory error, we report structure-aware diagnostics aligned with mechanism type, including energy drift for dissipative generators and invariant preservation for Hamiltonian blocks.
We also track a kinetic-energy diagnostic on the relevant field $v(\cdot,t)$, defined by
$E(t):=\tfrac12\|\mathbf v(t)\|_{w,2}^2$,
and summarize its deviation from the reference by the energy relative error
\[
\mathrm{relE}(t):=\frac{\big|E^{\mathrm{pred}}(t)-E^{\mathrm{ref}}(t)\big|}{\big|E^{\mathrm{ref}}(t)\big|}.
\]
These measurements are chosen to reflect the two sources of rollout error highlighted by Theorem~\ref{thm:main_structure_error}:
mismatch in the learned blocks and discretization error from symmetric composition.
Complementary per-task rollouts, error maps, and block-pretraining diagnostics are provided in Extended Data.

\subsection*{Benchmark scope: plug-and-play blocks across four baseplates and ten PDEs}

Fig.~\ref{fig:benchmark_big_combined1} provides an overview of the full benchmark suite and the plug-and-play modular PDE building blocks enabled by LegONet.
We construct four boundary-adapted baseplates—1D Dirichlet (Shen--Legendre), 2D periodic (Fourier), 2D Neumann (cosine), and 3D periodic (Fourier)—each equipped with fixed structure operators $(G,J)$ consistent with the trial space \footnote{On the 1D Shen baseplate, $M_{ij}=\langle\phi_i,\phi_j\rangle_{L^2}$ and $S_{ij}=\langle \partial_x\phi_i,\phi_j\rangle_{L^2}$, so $G=M^{-1}$ and $J=M^{-1}S$ realizes $\partial_x$ on the trial space.
In Fourier/cosine baseplates, $J_x$ and $J_y$ denote fixed coefficient-space representations of $\partial_x$ and $\partial_y$ on the retained modes.
Blocks $u_{xx}$ and $\Delta$ are E-blocks, $uu_x$ and $uu_y$ are H-blocks, and $\Delta^{-1}$ denotes the Poisson-inversion block.}. %\footnotemark.
On these shared coefficient representations, we pretrain plug-and-play blocks in the form of Eq.~\eqref{eq:intro_block_form}, including diffusion ($u_{xx}$ or $\Delta$), Hamiltonian transport ($uu_x$, $uu_y$), Poisson inversion ($\Delta^{-1}$), and higher-order reuse through repeated application.

Using only these primitives, we assemble solvers for ten time-dependent PDEs spanning one to three spatial dimensions, multiple boundary conditions and qualitatively different dynamics.
Fig.~\ref{fig:sum} reports end-to-end rollout accuracy, including worst-case and time-averaged relative errors over task-specific horizons chosen to probe representative regimes. Fig.~\ref{fig:benchmark_big_combined} then shows representative visual rollouts and normalized pointwise error maps, with color scales ranging from $10^{-4}$ down to $10^{-8}$ across cases.
Extended Data further reports the complete set of rollout visualizations and normalized error maps, including cases not shown in the main text.
Implementation details for baseplates, block pretraining, and equation-specific discretization settings are provided in Supplementary Information.

\subsection*{Plug-and-play time stepping: symmetric block composition without retraining}

Fig.~\ref{fig:strang} illustrates how pretrained blocks are assembled into a solver for each target PDE. For every system, we construct a symmetric Strang scheme in coefficient space, where each selected block contributes either a half-step ($\Delta t/2$) or a full step ($\Delta t$) update. Because all blocks act on the same coefficient state and are induced by fixed $(G,J)$ operators, the resulting composition is modular by construction. Changing the PDE therefore does not require retraining a monolithic model. It only requires selecting, reweighting and ordering the appropriate blocks.

This plug-and-play property is exercised repeatedly across the benchmark suite. We test operator recombination by adding or removing mechanisms such as transport or Poisson inversion, and we test boundary reconfiguration by switching baseplates while preserving the same block-based formulation. Together, these experiments evaluate the central claim of LegONet: the plug-and-play, structure-preserving blocks can support accurate and stable rollout across PDEs that differ in operator content, boundary conditions and dimensionality.

\subsection*{Case study I: coupling independently trained dissipation and transport blocks}

We first ask whether independently pretrained dissipative and Hamiltonian blocks remain compatible when they are assembled into a boundary-constrained solver. This is a basic test of the LegONet design: if plug-and-play blocks are to replace monolithic training, then separately learned mechanisms must still behave coherently after composition.

We consider 1D viscous Burgers with Dirichlet boundaries
\begin{equation}\label{eq:Burgers_1D_PDE_rewrite_res}
  u_t + u\,u_x = \nu u_{xx},\qquad x\in(-1,1),\qquad u(\pm1,t)=0,
\end{equation}
with $\nu=0.03$, time step $\Delta t=10^{-5}$, and final time $T=1$.
On the Shen baseplate, LegONet composes a diffusion block $u_{xx}$ represented as a dissipative generator $-G\nabla E^{a,\boldsymbol{\theta}}_{u_{xx}}$ and a transport block $uu_x$ represented as a Hamiltonian generator $J\nabla H^{a,\boldsymbol{\theta}}_{uu_x}$.
Here $E^{a,\boldsymbol{\theta}}_{u_{xx}}$ and $H^{a,\boldsymbol{\theta}}_{uu_{x}}$ are parameterized by small multilayer perceptrons with four hidden layers, width 128 and GELU activation.
We advance coefficients by Strang composition with one full diffusion step and two transport half-steps per macro-step (Fig.~\ref{fig:strang}).

\begin{figure}[H]
  \centering

  \begin{subfigure}[t]{\linewidth}
    \centering
    \includegraphics[width=\linewidth,trim={0cm 2cm 0cm 2cm},clip]{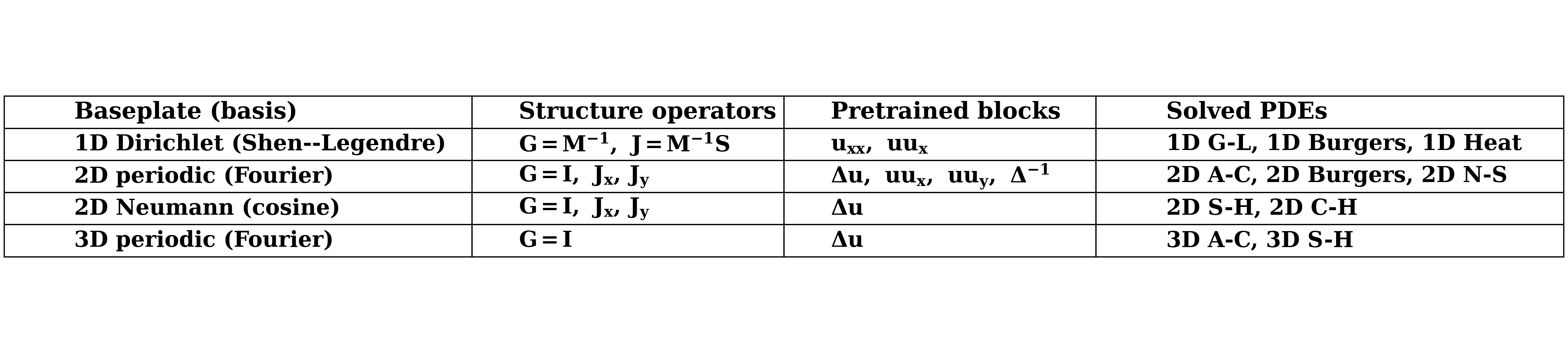}
    \caption{Baseplates and blocks.}\label{fig:block}
  \end{subfigure}\\[0.9ex]

  \begin{subfigure}[t]{\linewidth}
    \centering
    \includegraphics[width=\linewidth]{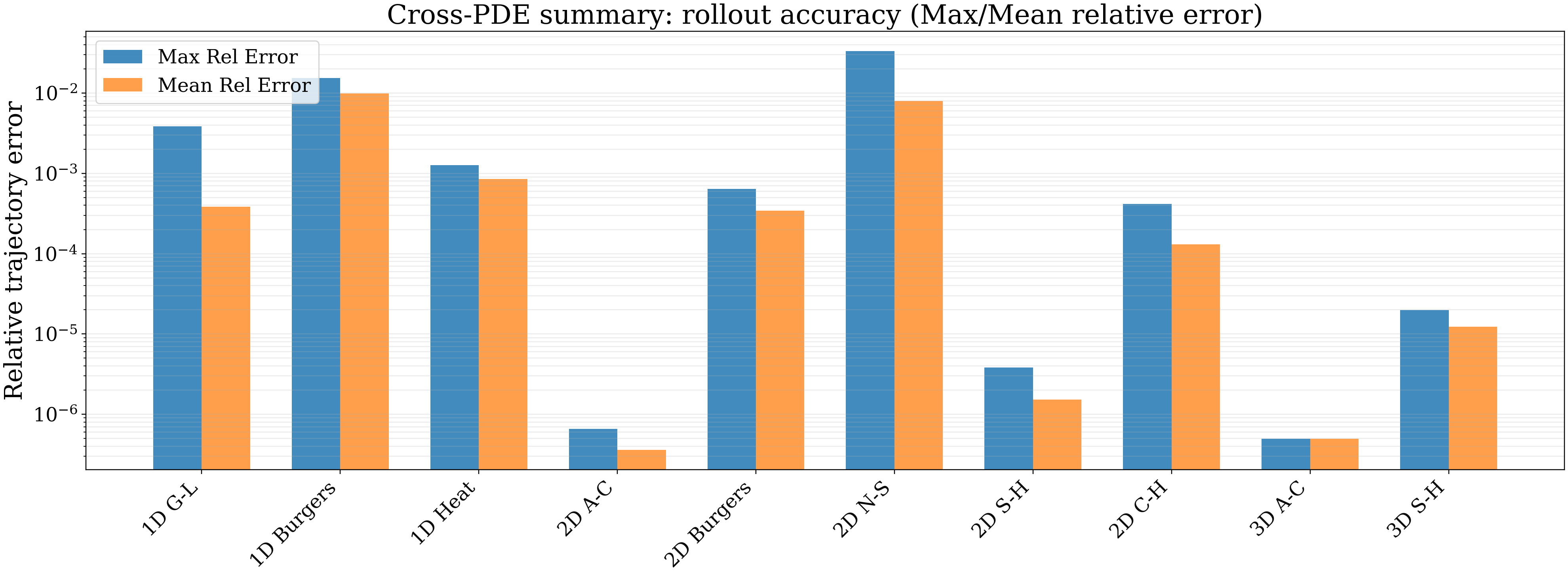}
    \caption{Cross-PDE rollout accuracy.}\label{fig:sum}
  \end{subfigure}\\[0.9ex]

  \begin{subfigure}[t]{\linewidth}
    \centering
    \includegraphics[width=\linewidth]{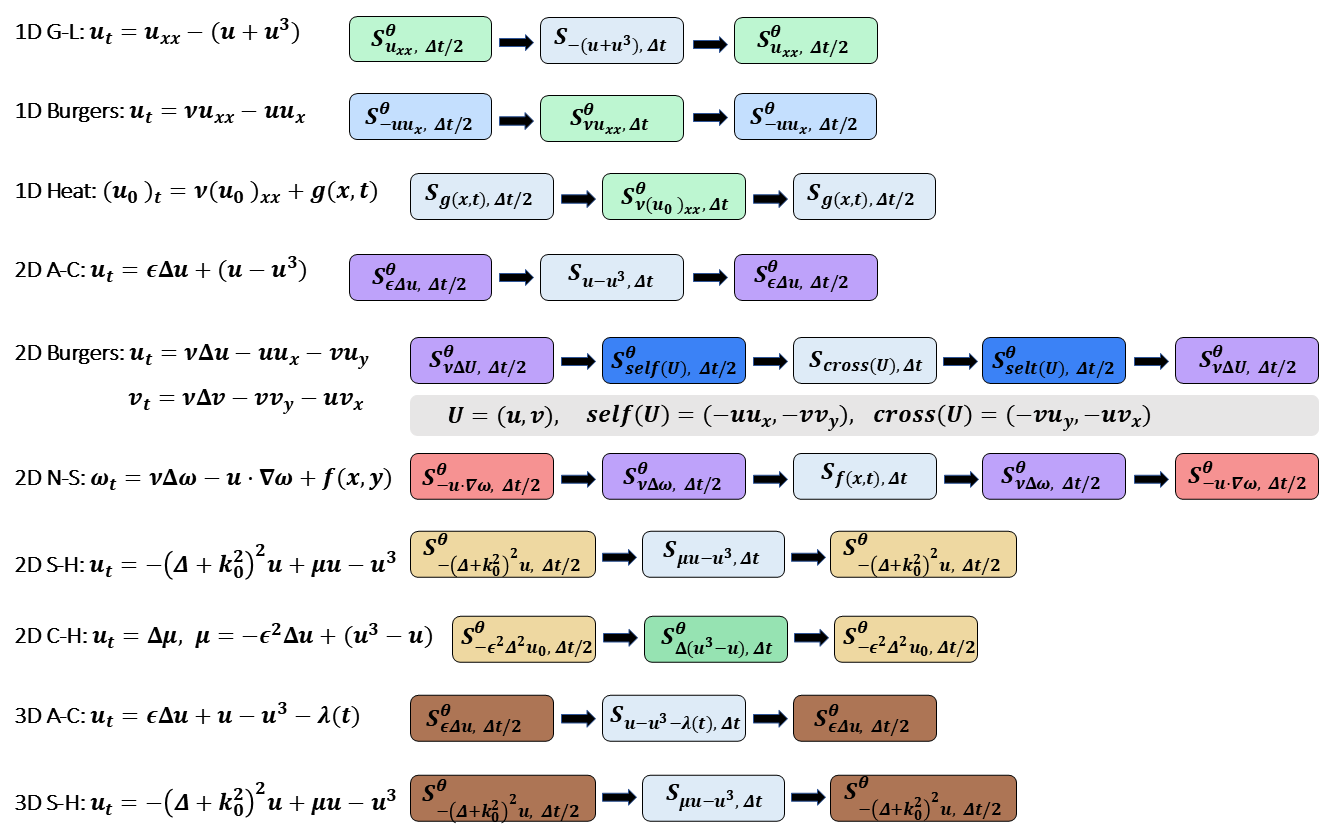}
    \caption{Strang composition recipes.}\label{fig:strang}
  \end{subfigure}

  \caption{\textbf{Overview of LegONet benchmarks.} We report four baseplates together with their structure-preserving operators, the pretrained operator blocks used on each coefficient interface, and the target PDEs solved in Fig. \ref{fig:block}. We also summarize cross-PDE rollout accuracy in Fig. \ref{fig:sum} and list the Strang-splitting composition order used for each experiment in Fig. \ref{fig:strang}.}
  \label{fig:benchmark_big_combined1}
\end{figure}

\begin{figure}[H]
  \centering

  \begin{minipage}[t]{0.49\linewidth}
    \centering
    \begin{minipage}[t]{0.49\linewidth}
      \centering
      \includegraphics[width=\linewidth]{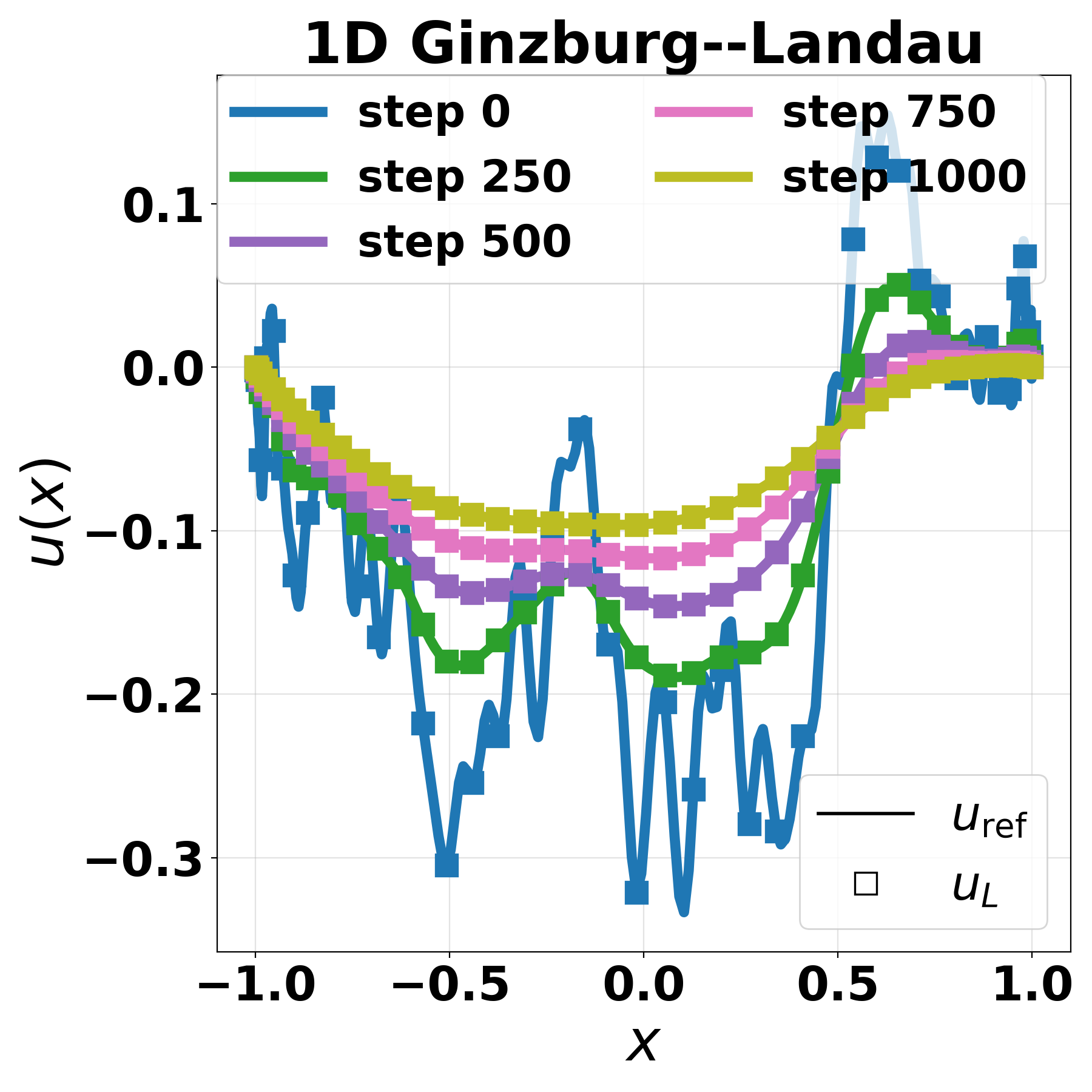}
    \end{minipage}\hfill
    \begin{minipage}[t]{0.49\linewidth}
      \centering
      \includegraphics[width=\linewidth]{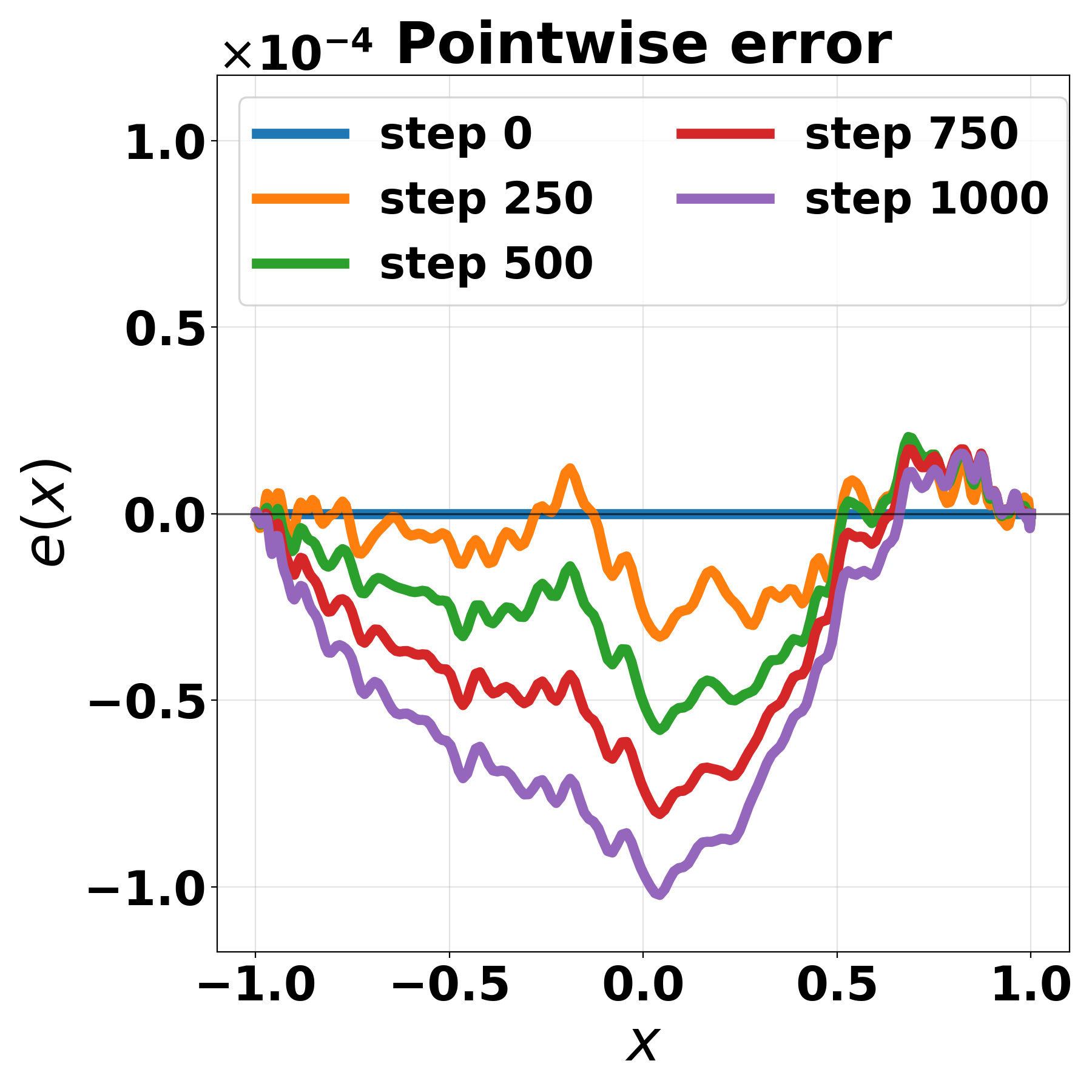}
    \end{minipage}

    \subcaption{1D Ginzburg--Landau
    }
    \label{fig:bench_gl_1d}
  \end{minipage}\hfill
  \begin{minipage}[t]{0.49\linewidth}
    \centering
    \begin{minipage}[t]{0.49\linewidth}
      \centering
      \includegraphics[width=\linewidth]{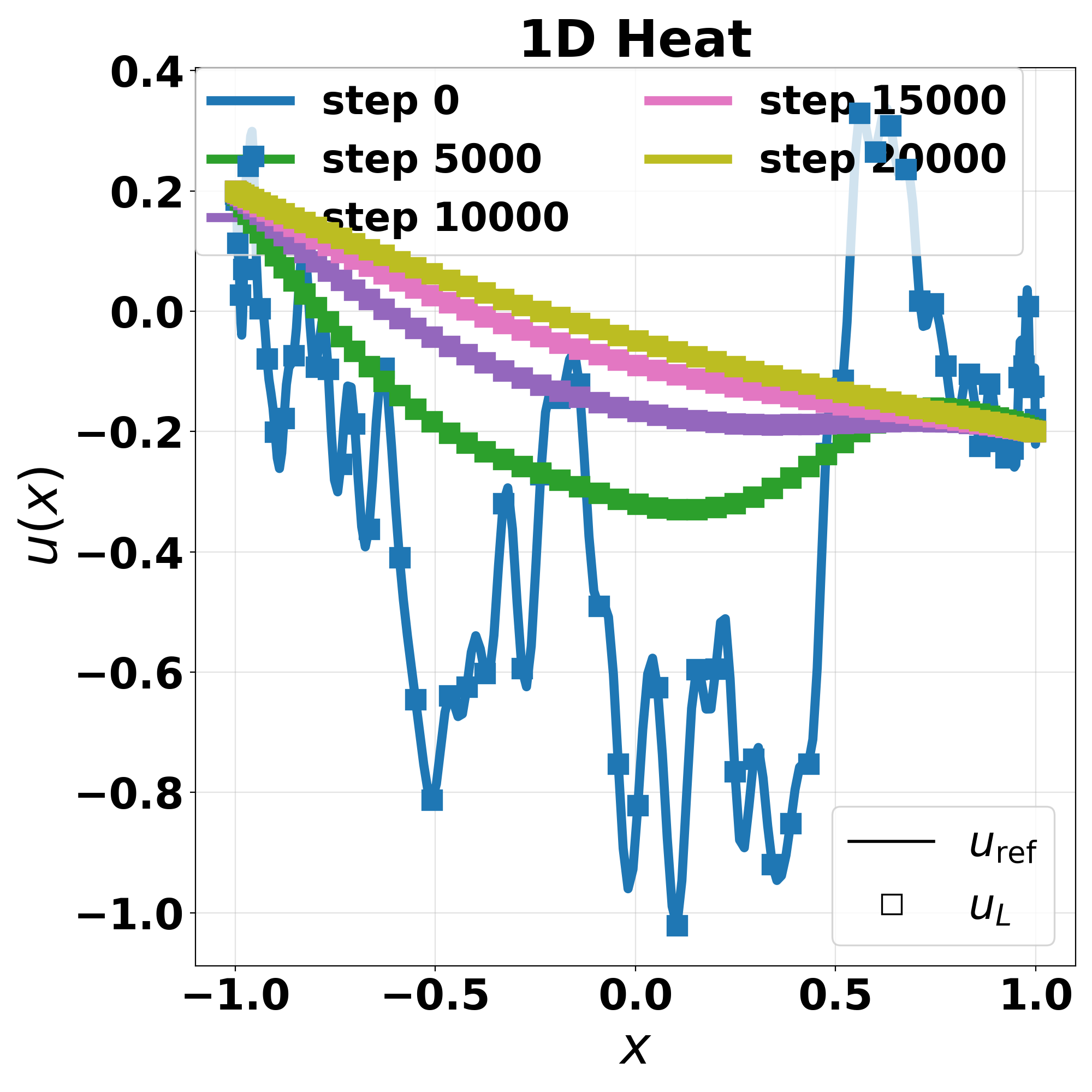}
    \end{minipage}\hfill
    \begin{minipage}[t]{0.49\linewidth}
      \centering
      \includegraphics[width=\linewidth]{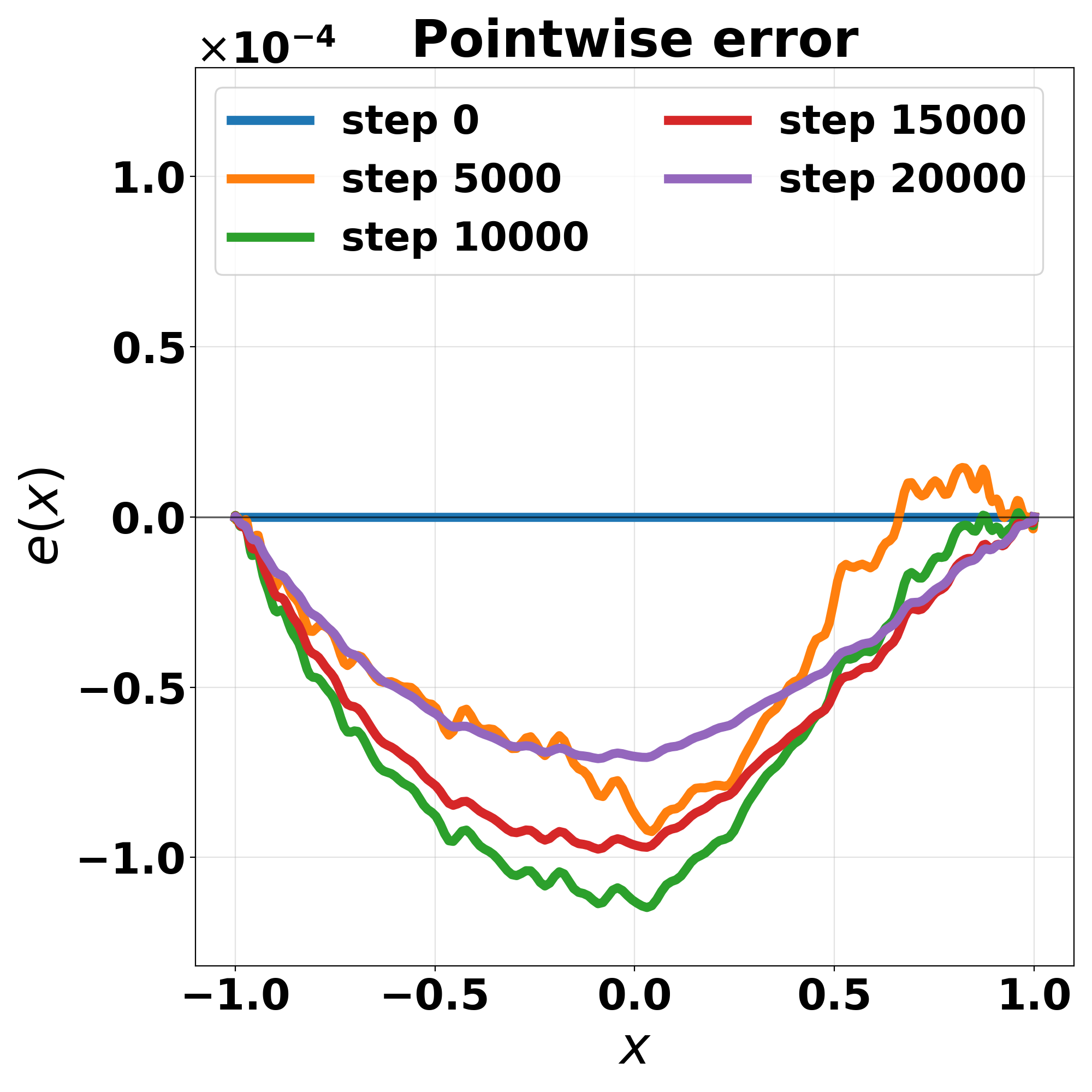}
    \end{minipage}

    \subcaption{1D heat 
    }
    \label{fig:bench_heat_1d}
  \end{minipage}

  \vspace{1.2ex}

  \begin{minipage}[t]{0.49\linewidth}
    \centering
    \includegraphics[width=\linewidth]{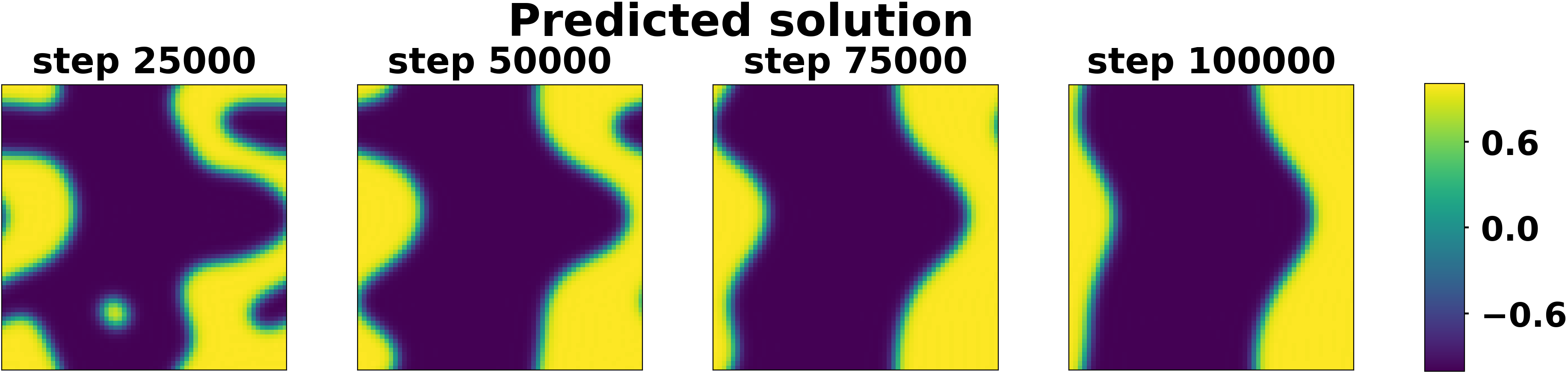}\\
    \includegraphics[width=\linewidth]{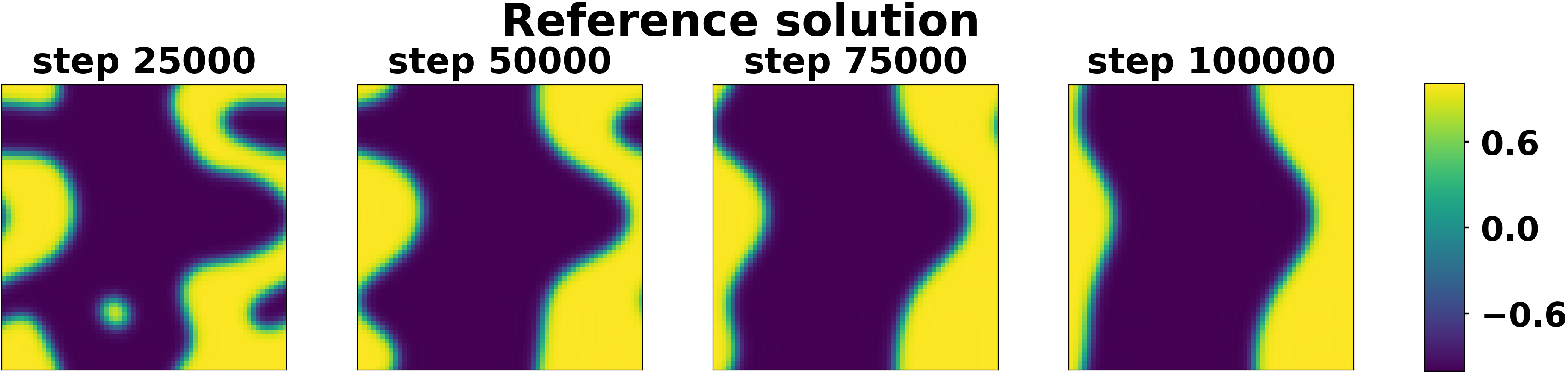}\\
    \includegraphics[width=\linewidth]{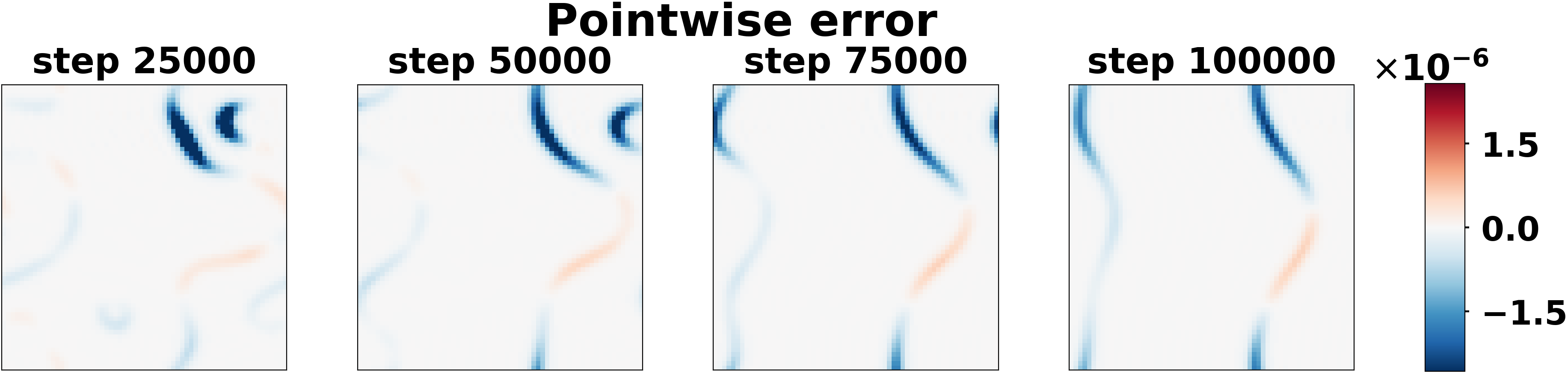}

    \subcaption{2D Allen--Cahn
    }
    \label{fig:bench_ac2d}
  \end{minipage}\hfill
  \begin{minipage}[t]{0.49\linewidth}
    \centering
    \includegraphics[width=\linewidth]{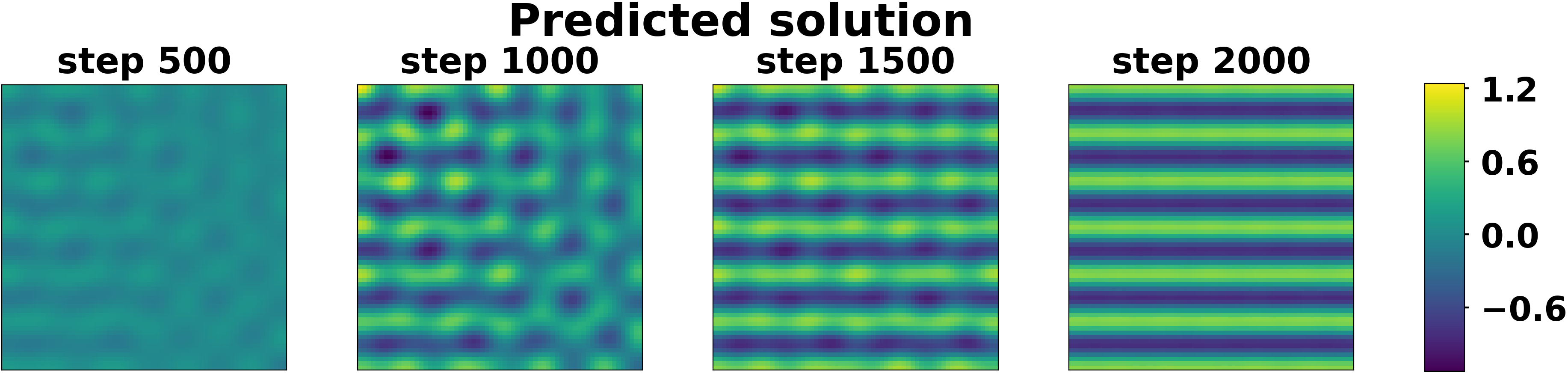}\\
    \includegraphics[width=\linewidth]{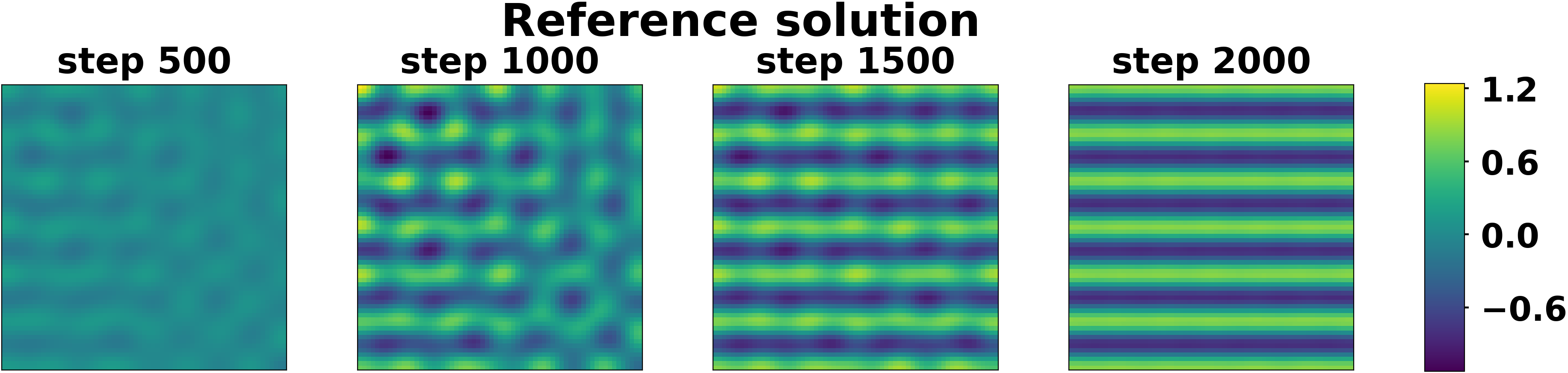}\\
    \includegraphics[width=\linewidth]{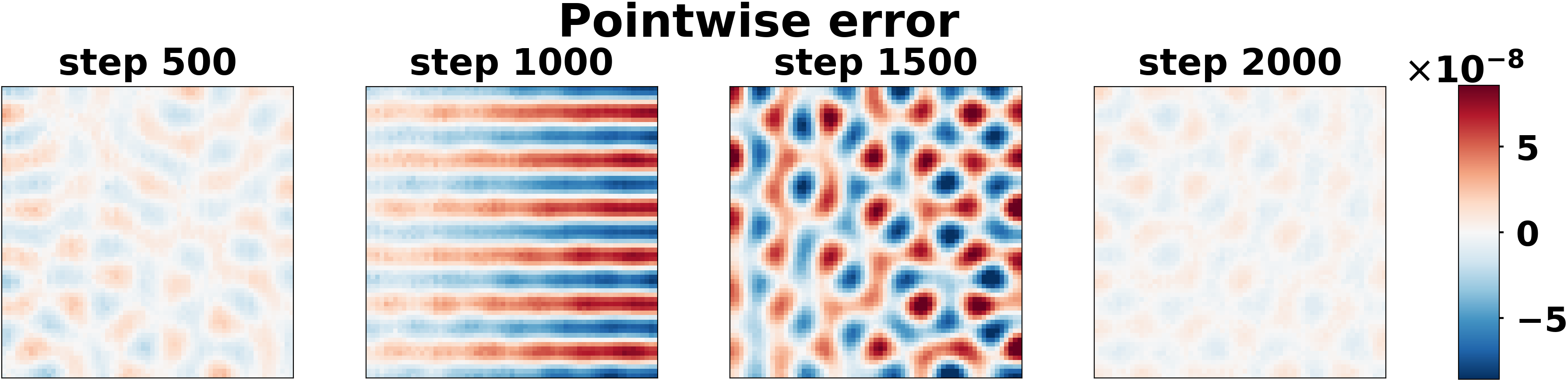}

    \subcaption{2D Swift--Hohenberg
    }
    \label{fig:bench_sh2d}
  \end{minipage}\\[1.0ex]

  \begin{minipage}[t]{0.49\linewidth}
    \centering
    \includegraphics[width=\linewidth]{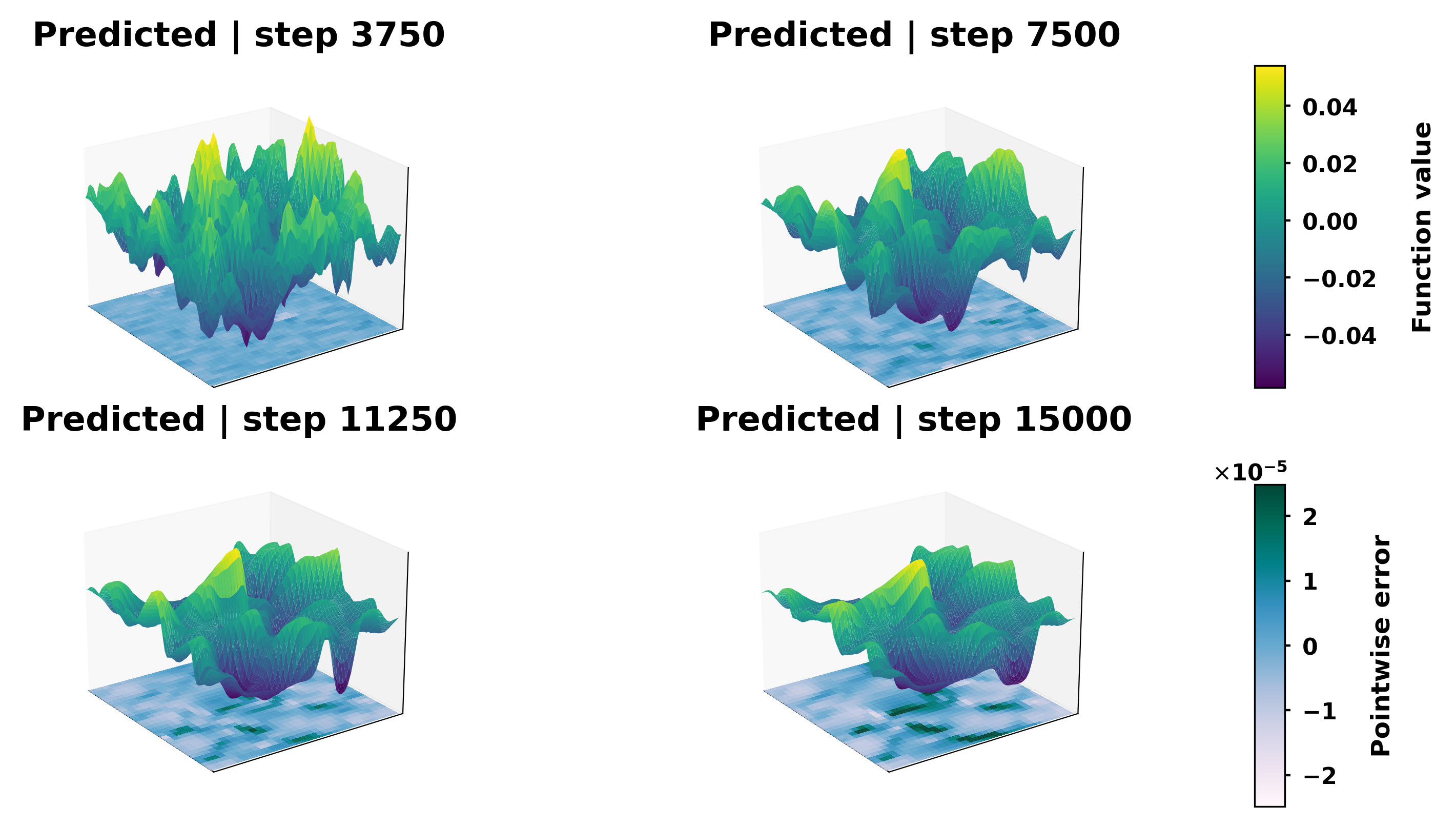}

    \subcaption{2D vector Burgers: $u$ component
    }
    \label{fig:bench_burgers2d_u}
  \end{minipage}\hfill
  \begin{minipage}[t]{0.49\linewidth}
    \centering
    \includegraphics[width=\linewidth]{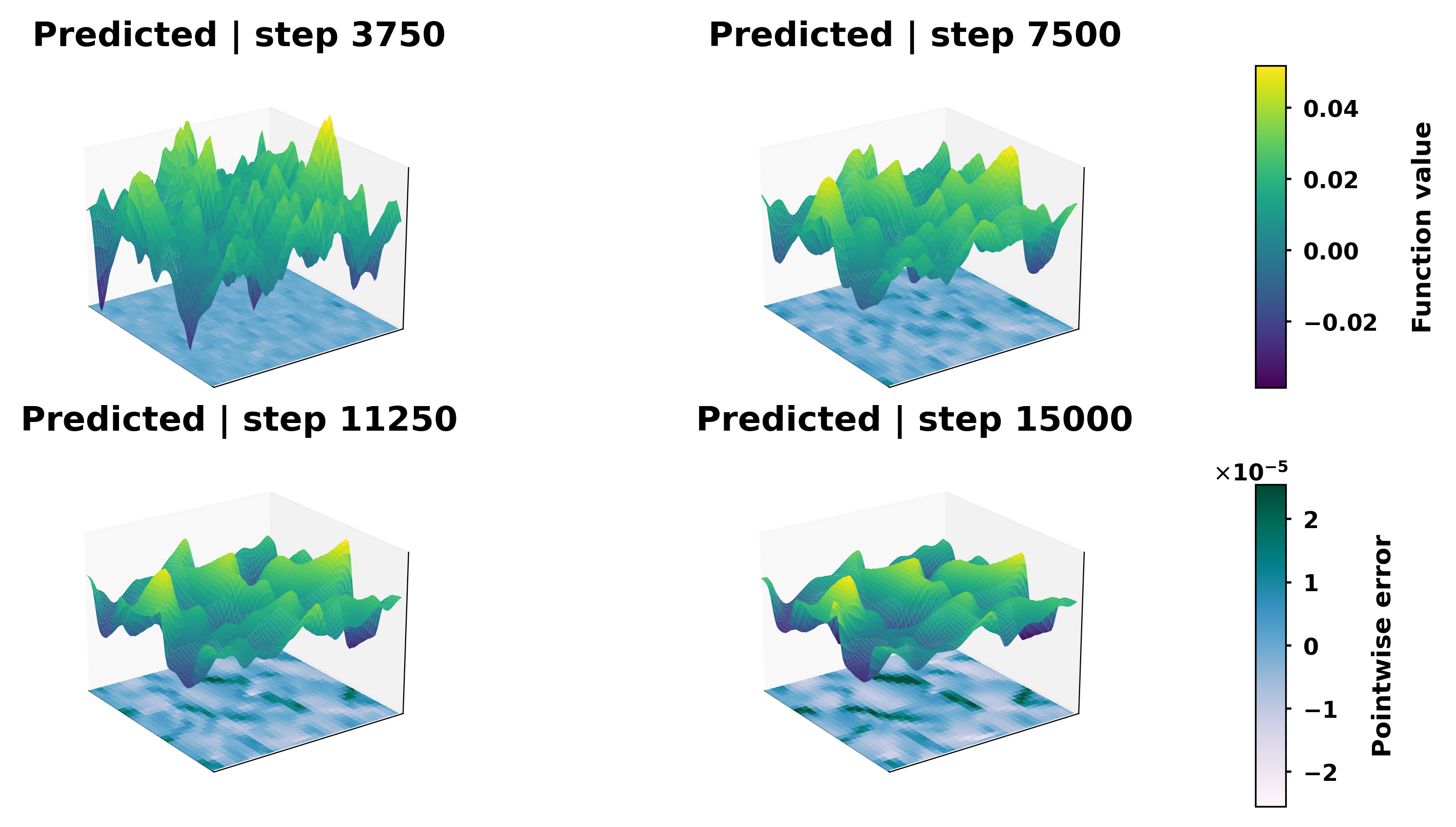}

    \subcaption{2D vector Burgers: $v$ component
    }
    \label{fig:bench_burgers2d_v}
  \end{minipage}\\[1.0ex]

    \begin{minipage}[t]{0.49\linewidth}
    \centering
    \includegraphics[width=\linewidth]{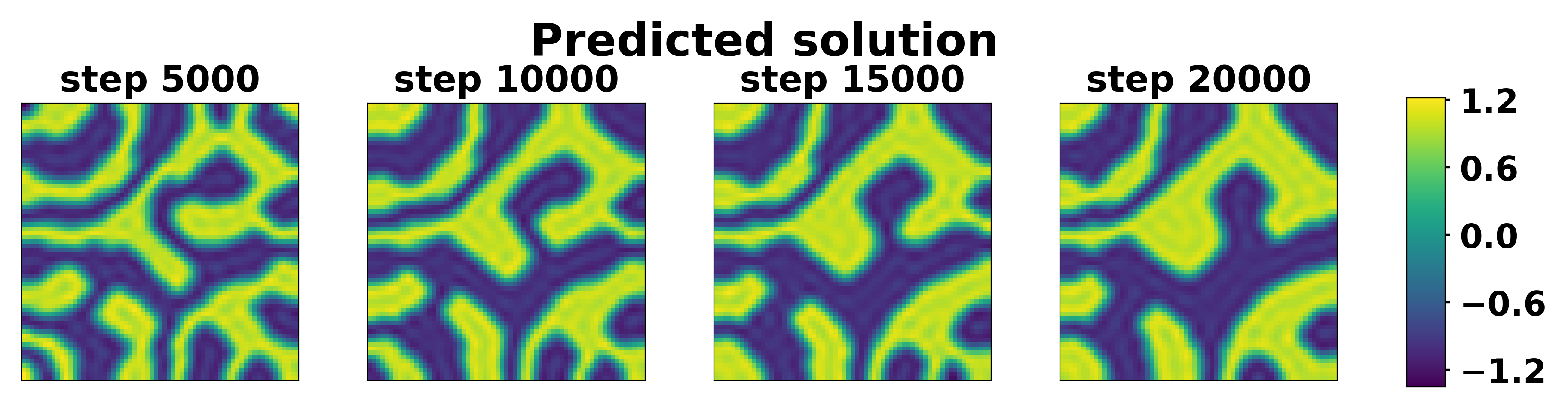}\\
    \includegraphics[width=\linewidth]{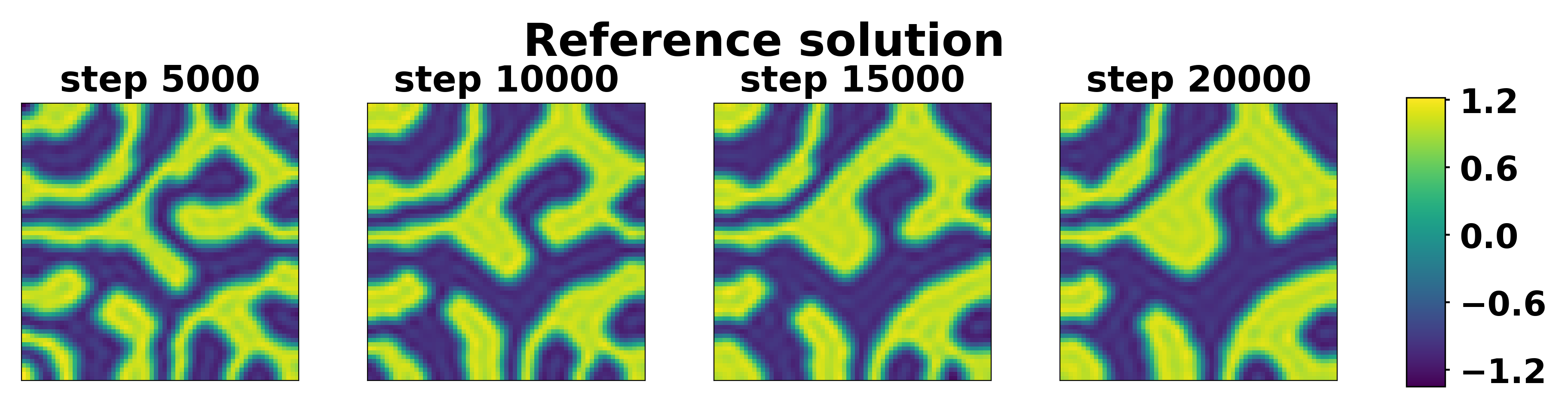}\\
    \includegraphics[width=\linewidth]{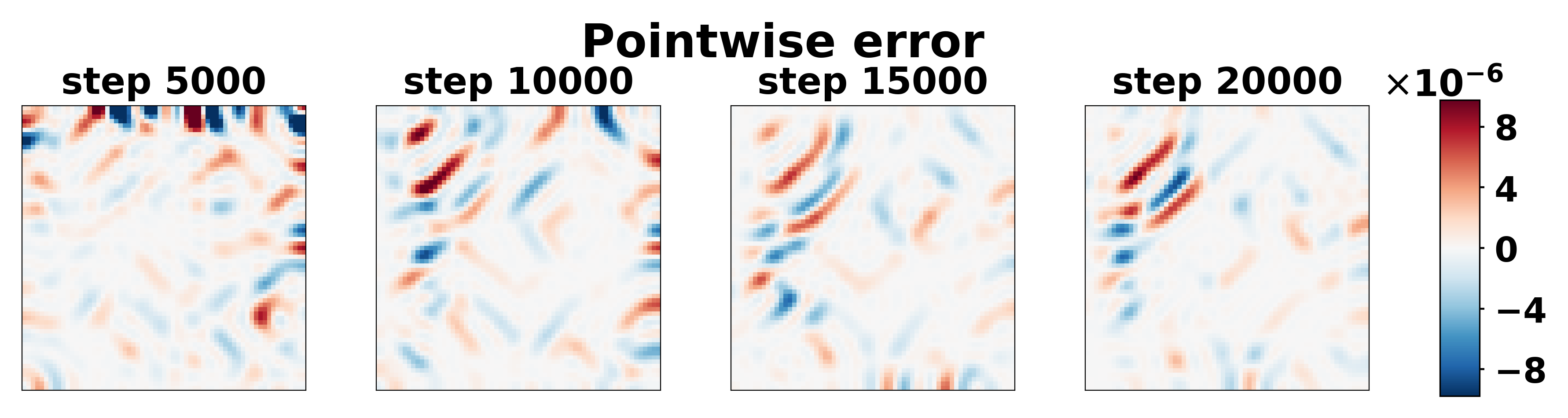}

    \subcaption{2D Cahn--Hilliard
    }
    \label{fig:bench_ch2d}
  \end{minipage}\hfill
  \begin{minipage}[t]{0.49\linewidth}
    \centering
    \includegraphics[width=\linewidth]{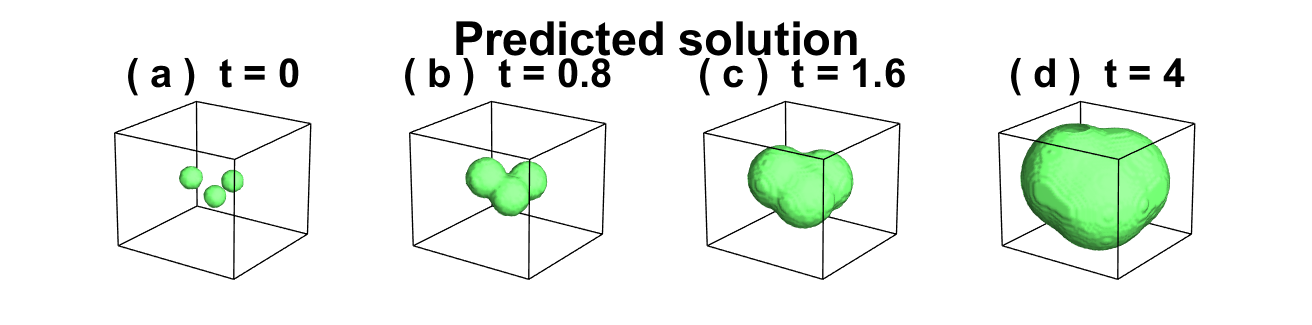}\\
    \includegraphics[width=\linewidth]{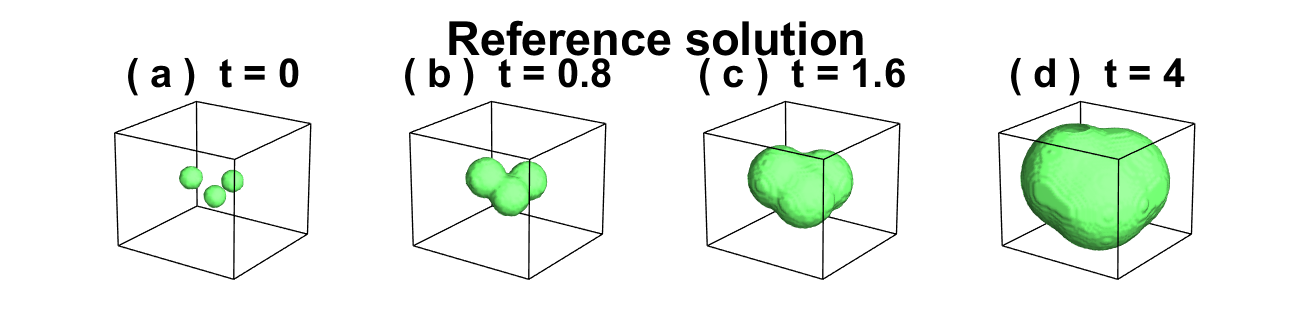}\\
    \includegraphics[width=0.95\linewidth]{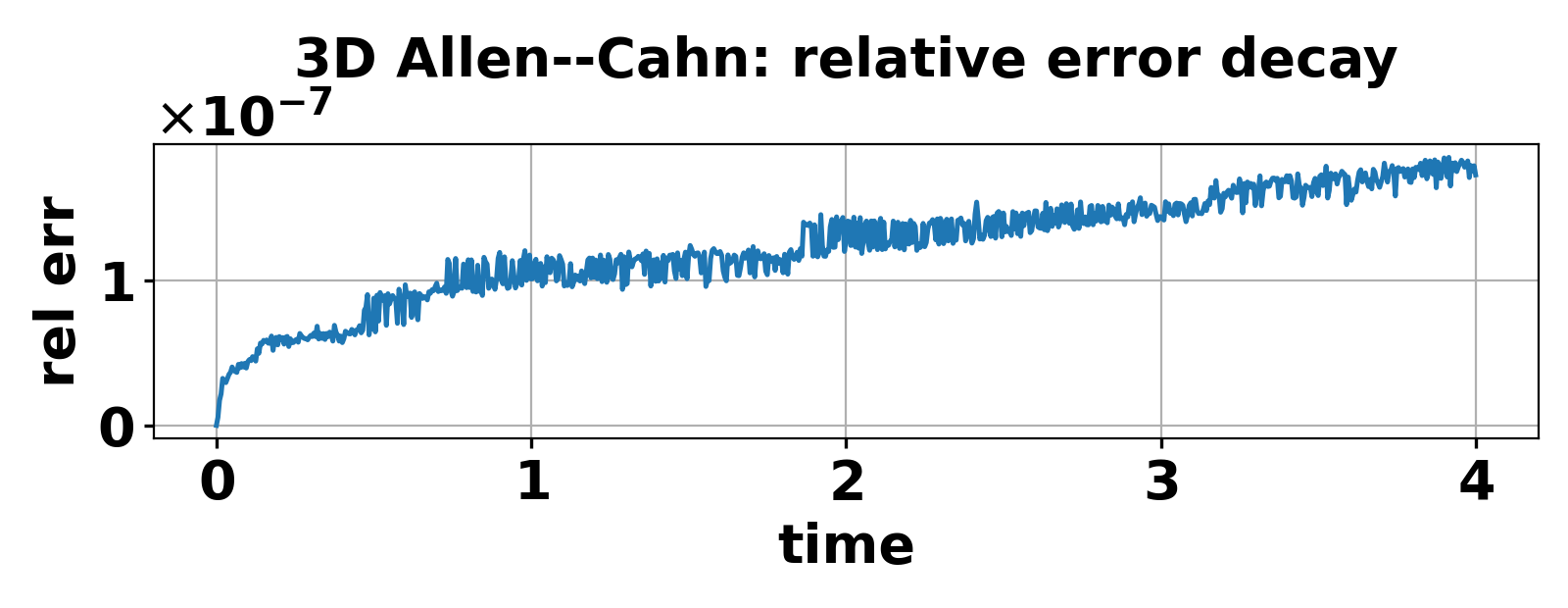}

    \subcaption{3D Allen--Cahn
    }
    \label{fig:bench_ac3d}
  \end{minipage}

  \caption{\textbf{Cross-PDE rollout snapshots.}
  Figs. \ref{fig:bench_gl_1d}--\ref{fig:bench_heat_1d} show 1D Dirichlet results;
  \ref{fig:bench_ac2d}--\ref{fig:bench_burgers2d_v} report 2D periodic Fourier and 2D Neumann cosine rollouts; and
  \ref{fig:bench_ch2d}--\ref{fig:bench_ac3d} summarize the 2D Cahn--Hilliard lifting test and the 3D Allen--Cahn case.
  Each panel reports predicted versus reference states and the associated pointwise error, with the 3D case additionally showing the error curve.}
  \label{fig:benchmark_big_combined}
\end{figure}

LegONet closely tracks the spectral reference throughout the rollout (Fig.~\ref{fig:burgers_overlay}). The normalized pointwise error also remains small over most of the domain (Fig.~\ref{fig:burgers_pointwise}). The largest visible discrepancies appear near the Dirichlet boundaries, where thin boundary layers and spectral truncation concentrate gradients and amplify local interpolation errors. Even in this setting, the predicted solution remains well aligned with the reference trajectory.

The structure diagnostics show that the intended blockwise behavior is preserved after composition. The diffusion block produces monotone energy dissipation, reflected by the nonnegative per-step decay $-\Delta E$ on the vertical axis, while the transport block preserves its Hamiltonian, with the per-half-step drift $\Delta H_x$ remaining at numerical precision across the two half-steps (Figs.~\ref{fig:burgers_dE_substep} and \ref{fig:burgers_dH_substep}).
These results are important because they show that the structure imposed during block pretraining is not lost when the blocks are assembled into a full solver. In other words, the composed dynamics remain interpretable at the level of mechanisms, rather than only at the level of end-to-end rollout.

We next compare LegONet against representative baselines from the two dominant paradigms: supervised operator-learning time-steppers (FNO and DeepONet) and a physics-informed space–time PINN, all trained under comparable parameter and supervision budgets (Supplementary Information).
Closed-loop weighted relative $L^2$ error and kinetic-energy diagnostics, defined by
$E(t):=\tfrac12\|\mathbf u(\cdot,t)\|_{w,2}^2$,
are shown in Figs.~\ref{fig:burgers_wrel} and \ref{fig:burgers_energy}.
%Closed-loop weighted relative $L^2$ error and kinetic-energy diagnostics are shown in Figs.~\ref{fig:burgers_wrel} and \ref{fig:burgers_energy}.
LegONet exhibits the most stable rollout and the smallest energy deviation. By contrast, the supervised operator learners accumulate drift over time, while the PINN baseline exhibits a persistent bias consistent with fitting a global space–time surrogate.

\subsection*{Case study II: turbulence requires structured primitives under long-horizon composition}

We next ask whether structured operator blocks remain important in a regime where long-horizon composition is genuinely challenging. Turbulent dynamics provide a stringent test: diffusion, nonlinear transport and Poisson inversion must interact over many time steps, and small errors can accumulate rapidly. If LegONet is to offer more than modularity alone, then its structure-preserving primitives should matter most in precisely this setting.

\begin{figure}[H]
  \centering

  % ---------------- Row 1 ----------------
  \begin{subfigure}[t]{0.49\linewidth}
    \centering
    \includegraphics[width=\linewidth]{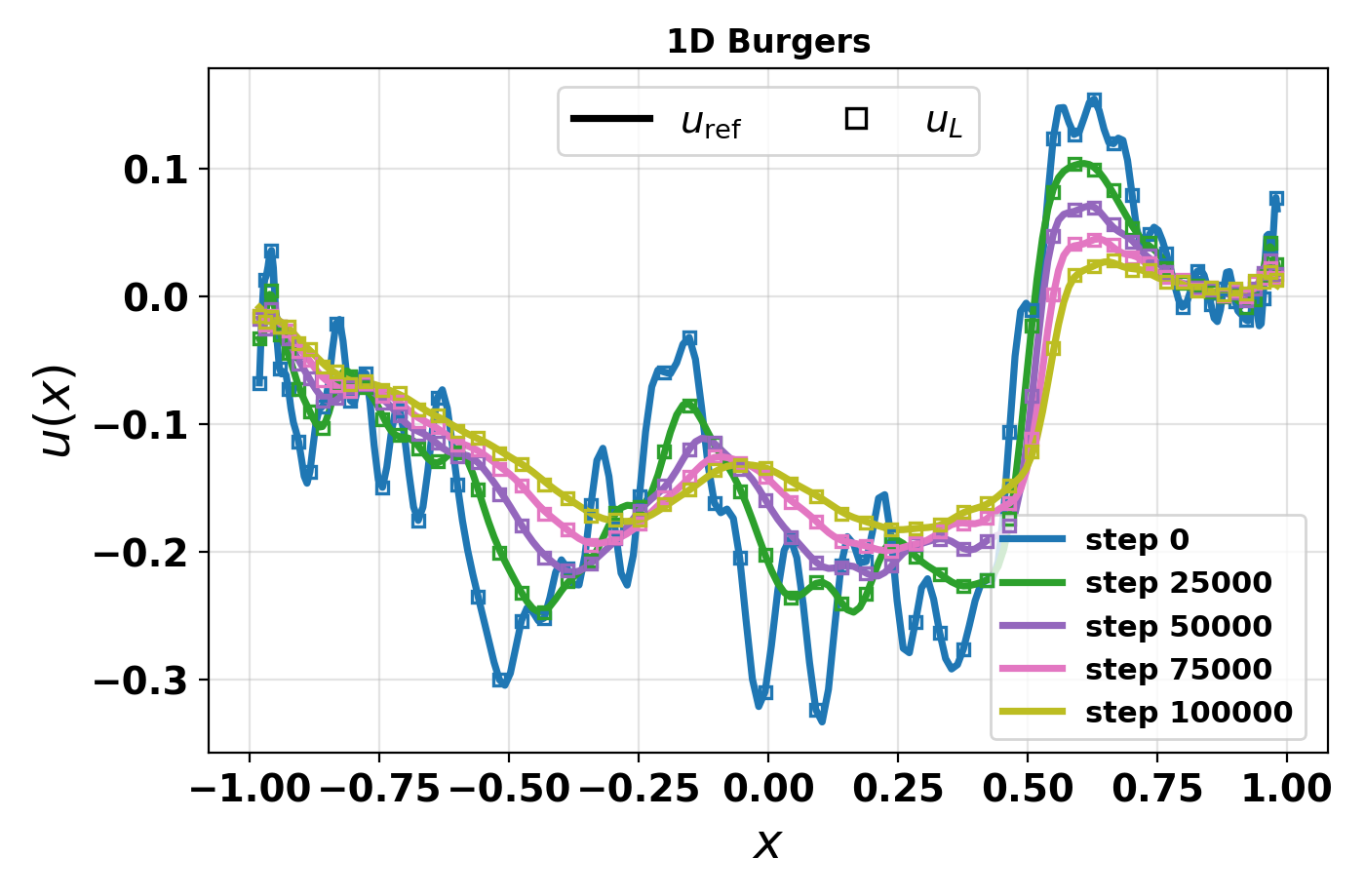}
    \caption{Reference vs.\ LegONet snapshots at selected times.}
    \label{fig:burgers_overlay}
  \end{subfigure}\hfill
  \begin{subfigure}[t]{0.49\linewidth}
    \centering
    \includegraphics[width=\linewidth]{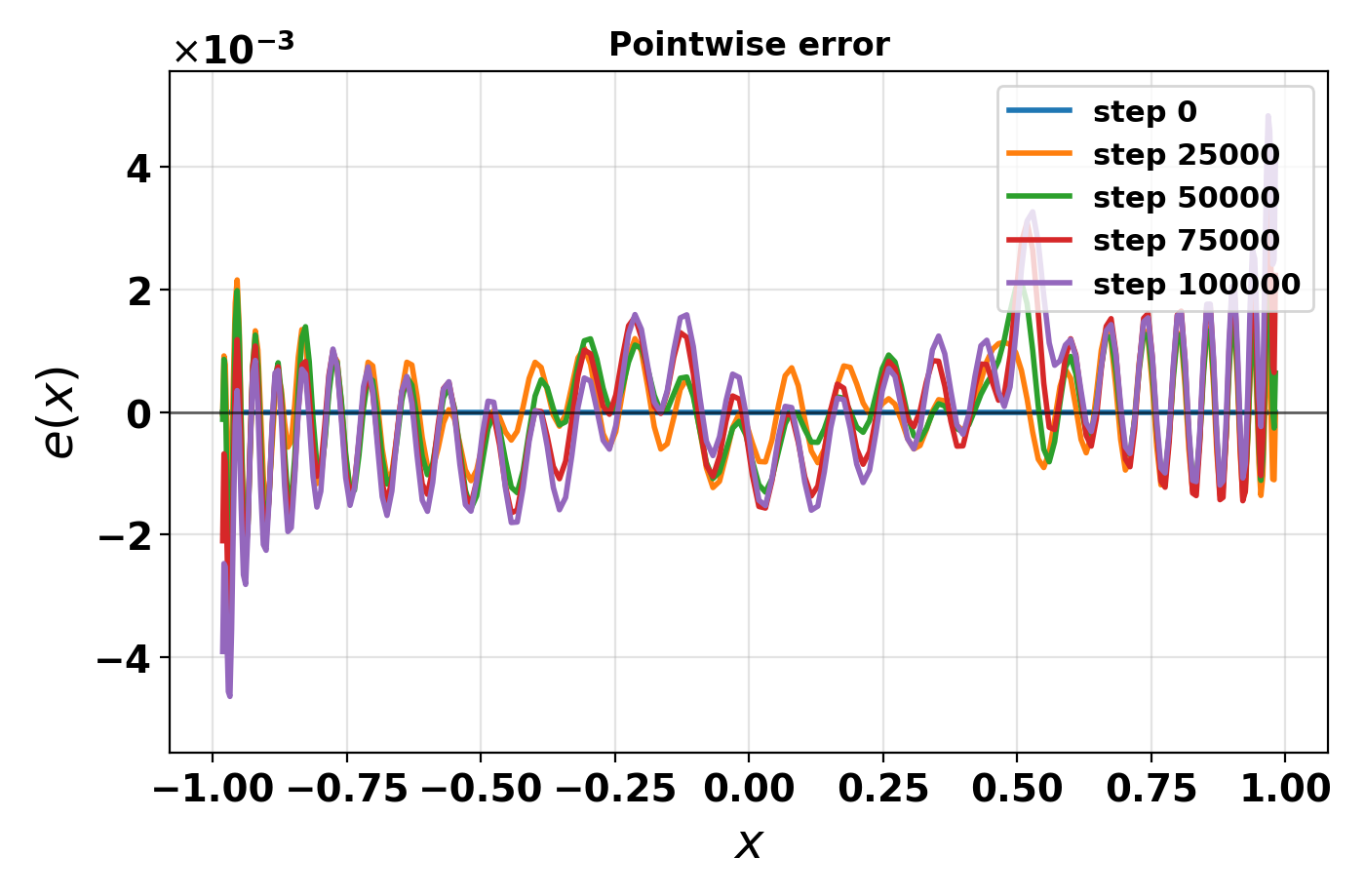}
    \caption{Normalized pointwise error $e(x)$ at selected times.}
    \label{fig:burgers_pointwise}
  \end{subfigure}

  \vspace{0.6em}

  % ---------------- Row 2 ----------------
  \begin{subfigure}[t]{0.49\linewidth}
    \centering
    \includegraphics[width=\linewidth]{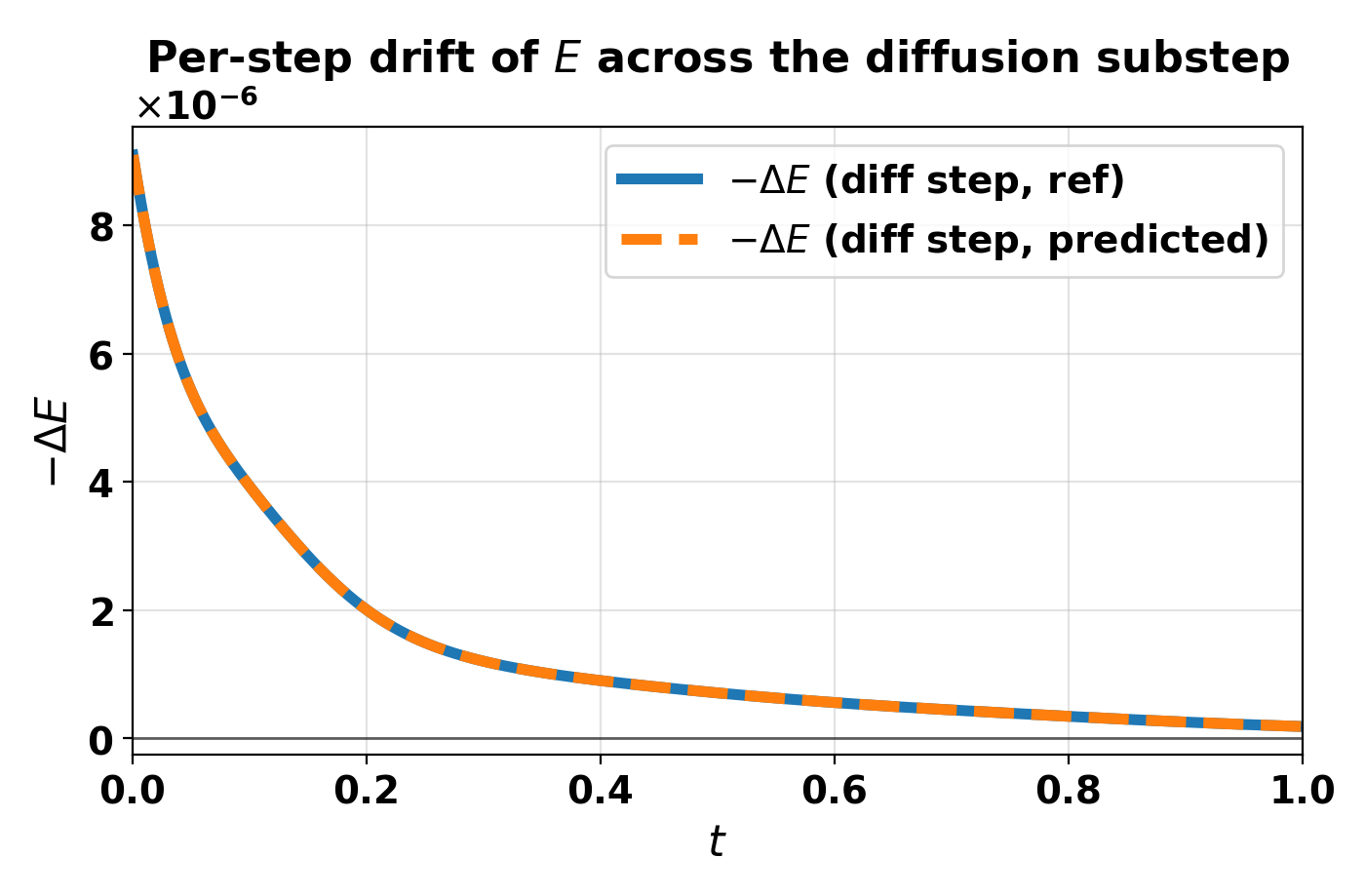}
    \caption{E-block diagnostic: $-\Delta E$ across the diffusion substep.}
    \label{fig:burgers_dE_substep}
  \end{subfigure}\hfill
  \begin{subfigure}[t]{0.49\linewidth}
    \centering
    \includegraphics[width=\linewidth]{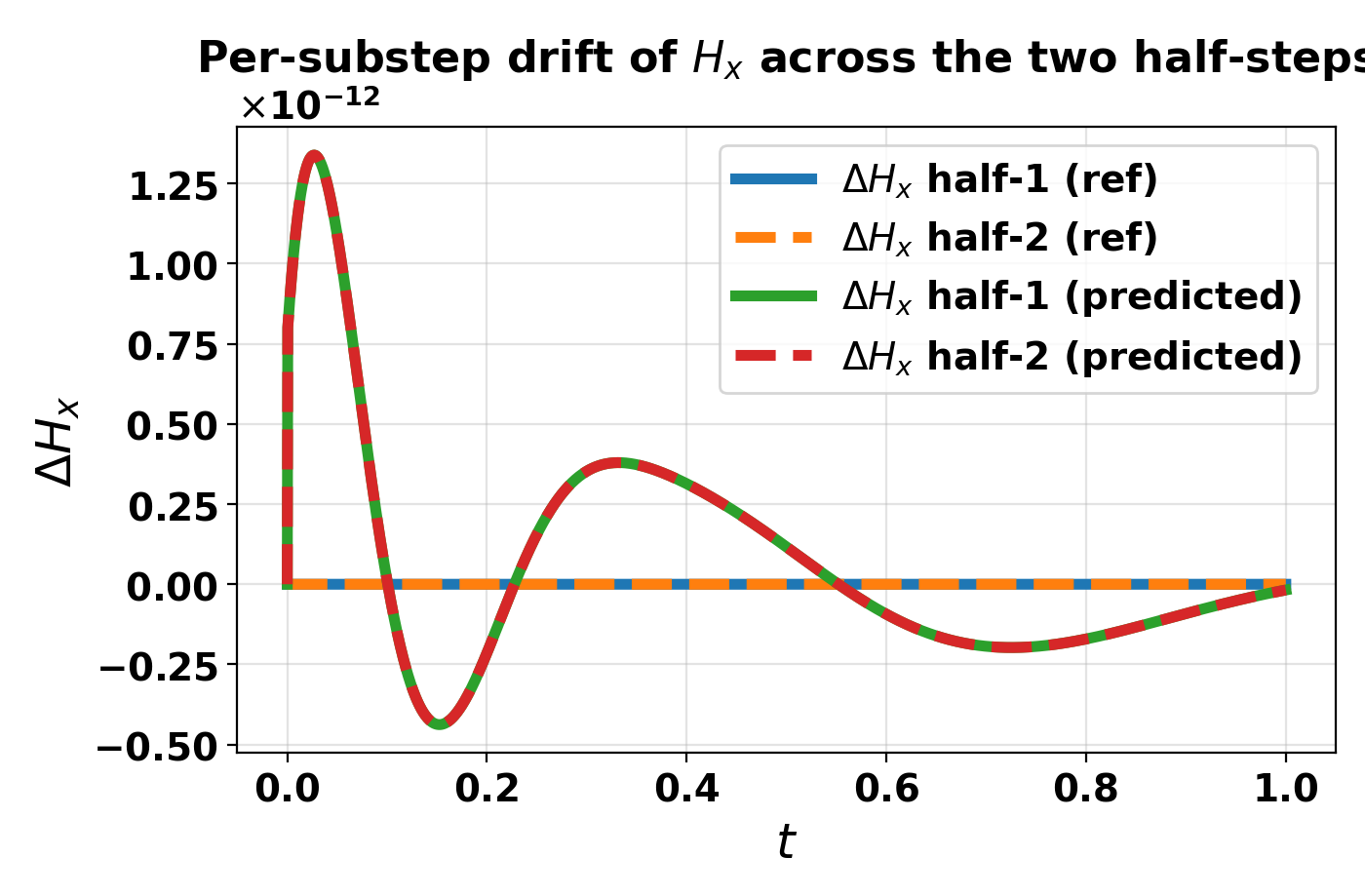}
    \caption{H-block diagnostic: per-substep drift of $H_x$ across the two half-steps.}
    \label{fig:burgers_dH_substep}
  \end{subfigure}

  \vspace{0.6em}

  % ---------------- Row 3 ----------------
  \begin{subfigure}[t]{0.49\linewidth}
    \centering
    \includegraphics[width=\linewidth]{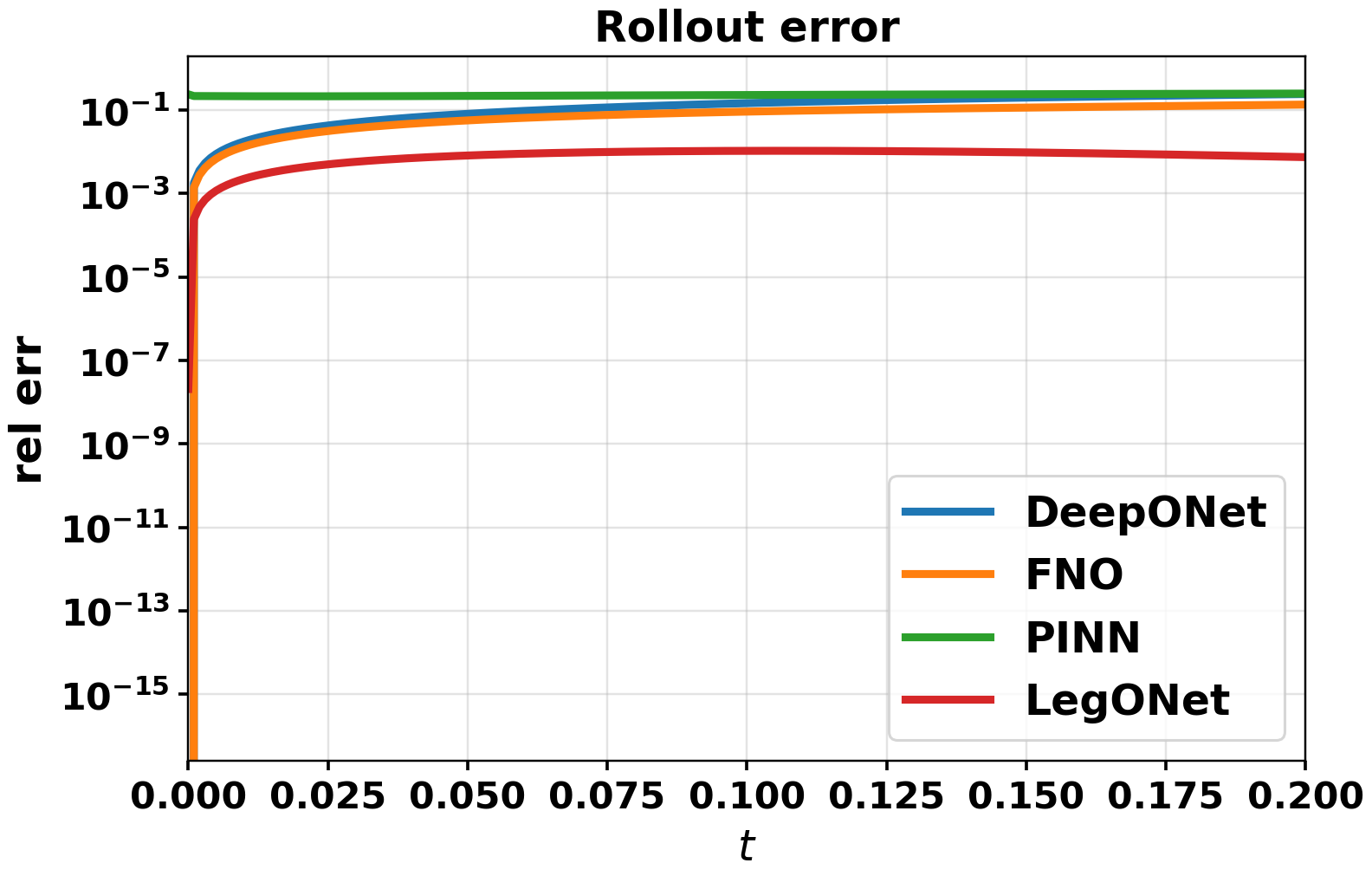}
    \caption{Closed-loop rollout error (weighted relative $L^2$).}
    \label{fig:burgers_wrel}
  \end{subfigure}\hfill
  \begin{subfigure}[t]{0.49\linewidth}
    \centering
    \includegraphics[width=\linewidth]{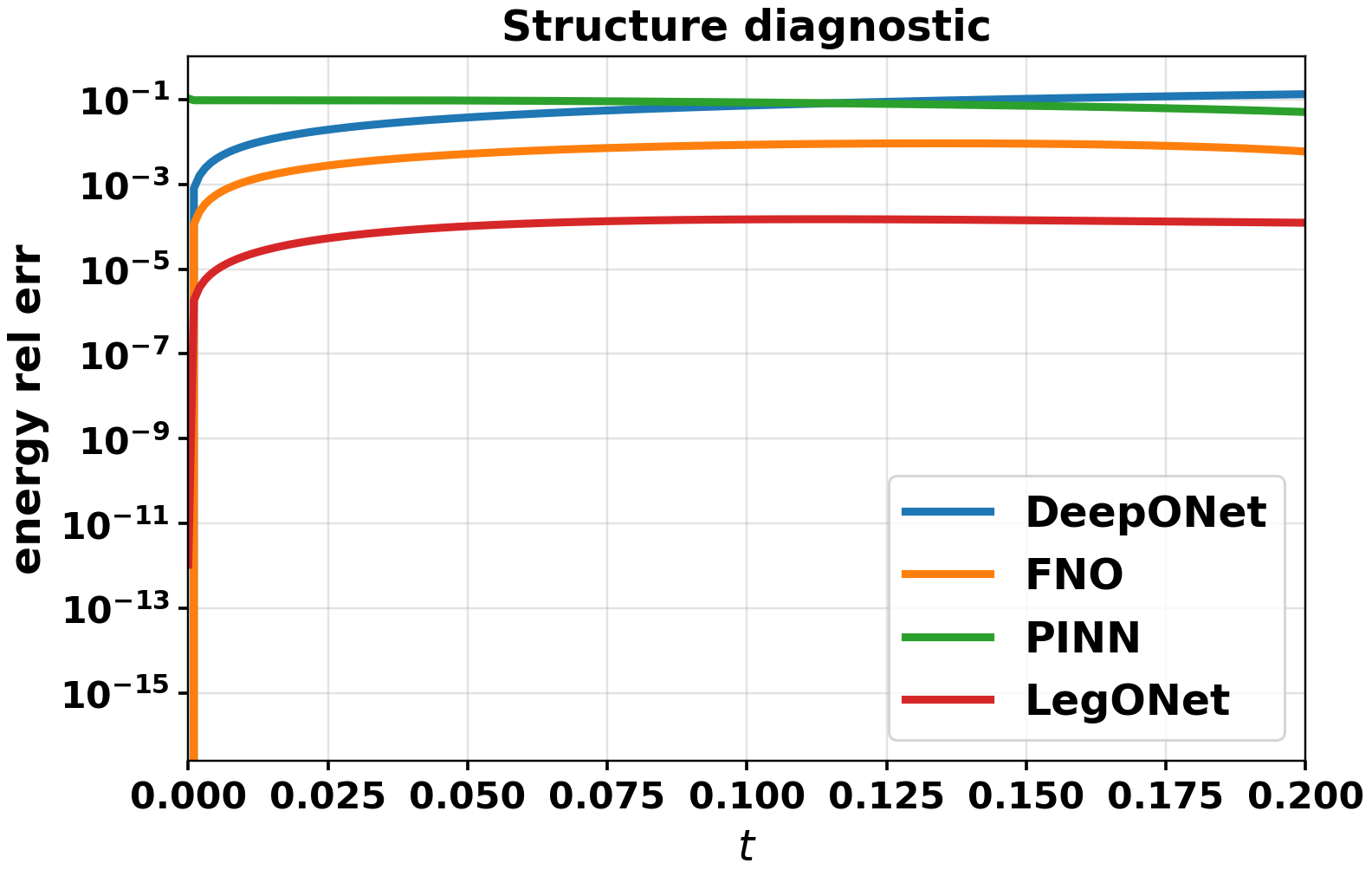}
    \caption{Relative energy error over time.}
    \label{fig:burgers_energy}
  \end{subfigure}

  \caption{\textbf{1D Burgers under Dirichlet boundaries.}
Top: reference and LegONet snapshots with normalized pointwise error.
Middle: substep diagnostics for the assembled E- and H-blocks.
Bottom: closed-loop rollout accuracy and energy diagnostic comparing LegONet with PINN, FNO, and DeepONet under the same evaluation protocol.}
  \label{fig:1d_burgers}
\end{figure}

We consider the 2D incompressible Navier--Stokes equation in vorticity form on the periodic torus $\Omega=[0,2\pi)^2$,
\begin{equation}\label{eq:NS_vort_rewrite_res}
  \omega_t + u\cdot\nabla\omega
  = \nu\,\Delta\omega + f(x,y),\qquad \nabla\cdot u=0,
\end{equation}
with periodic boundary conditions in both $x$ and $y$.
We parameterize the divergence-free velocity by a streamfunction $u=(\psi_y,-\psi_x)$ with
$-\Delta\psi=\omega$ on $\Omega$.
We use Kolmogorov forcing $f(x,y)=0.1(\sin(x+y)+\cos(x+y))$,
viscosity $\nu=10^{-4}$, step size $\Delta t=10^{-3}$, and final time $T=50$.
On the Fourier baseplate, LegONet assembles a Laplacian block $\Delta$, a Poisson-inversion block $\Delta^{-1}$, and transport primitives, and advances coefficients by symmetric splitting (Fig.~\ref{fig:strang}).

For the linear primitives, we use structured quadratic generators on retained modes:
$E_{\Delta}^{a,\boldsymbol{\theta}}(\mathbf a)=\tfrac12 \mathbf a^\top C_{\Delta}^{\boldsymbol{\theta}} \mathbf a$ with $F_{\Delta}^{\boldsymbol{\theta}}(\mathbf a)=-G\nabla_a E_{\Delta}^{a,\boldsymbol{\theta}}$,
and $H_{\Delta^{-1}}^{a,\boldsymbol{\theta}}(\mathbf a)=\tfrac12 \mathbf a^\top C_{\Delta^{-1}}^{\boldsymbol{\theta}} \mathbf a$ with $F_{\Delta^{-1}}^{\boldsymbol{\theta}}(\mathbf a)=\nabla_a H_{\Delta^{-1}}^{a,\boldsymbol{\theta}}$,
where $C_{\Delta}^{\boldsymbol{\theta}}$ and $C_{\Delta^{-1}}^{\boldsymbol{\theta}}$ are diagonal in the Fourier basis. This construction keeps the linear mechanisms aligned with the spectral structure of the baseplate while preserving the shared coefficient representation used for block composition.

Fig.~\ref{fig:ns_steps} compares predicted and reference vorticity fields together with normalized pointwise error at representative times. Despite the turbulent regime and the long rollout horizon, LegONet remains stable and accurate throughout the simulation, with relative error staying below $4\%$ over $T=50$. The predicted vortical structures remain visually well matched to the reference solution, indicating that the composed blocks preserve the dominant flow dynamics even after many thousands of update steps.

To isolate the role of structure, we replace the Laplacian E-block with a parameter-matched but unconstrained neural update, denoted LegONet-unconstrained.
This ablation removes the generator-induced form $-G\nabla E$ while keeping model capacity fixed. The effect is immediate: rollout error rises sharply and the energy diagnostic
$E(t):=\tfrac12\|\boldsymbol{\omega}(\cdot,t)\|_{w,2}^2$
exhibits substantially larger drift (Fig.~\ref{fig:ns_rollout_and_energy}).
Because parameter counts are identical, this degradation cannot be attributed to insufficient capacity. Instead, it shows that in turbulent long-horizon composition, stable behavior depends on mechanism-level structure rather than on expressive function approximation alone.

We also compare against supervised operator-learning baselines under the same closed-loop evaluation protocol. FNO and DeepONet are included, whereas PINN baselines are omitted because optimizing a global space–time surrogate in this chaotic regime does not produce stable rollouts (Supplementary Information). As shown in Fig.~\ref{fig:ns_rollout_and_energy}, LegONet maintains substantially better long-horizon stability than the supervised baselines. This experiment therefore serves as a strong test of the central claim of the paper: in demanding multimechanism settings, plug-and-play composition is not enough by itself. Structured primitives are essential.

\subsection*{Case study III: stiff higher-order operators reuse the same primitives across dimensions}
We finally test whether LegONet remains effective when operator complexity, stiffness and dimensionality increase at the same time. This is a demanding setting for compositional learning. Higher-order operators require repeated block application, stiff dynamics amplify rollout errors, and three-dimensional pattern formation places additional pressure on both stability and expressivity. If LegONet is to support plug-and-play scientific solvers, then its block-based formulation should remain effective under all three challenges.

We consider the 3D Swift--Hohenberg equation on the three-dimensional torus $\Omega=[0,2\pi)^3$, with periodic boundary conditions in each coordinate direction,
\begin{equation}\label{eq:SH3D_PDE_rewrite_res}
  u_t = -(\Delta+k_0^2)^2 u + \mu u - u^3,
\end{equation}
with $\mu=0.6$, $k_0=6$, time step $\Delta t=5\times10^{-4}$, and final time $T=30$.
The stiff linear term requires two successive Laplacian applications.
LegONet reuses the pretrained 3D Laplacian primitive in the same E-block form as in 2D, i.e., a dissipative update $-G\nabla_a E_{\Delta}^{a,\boldsymbol{\theta}}$ on the shared coefficient state,
and composes the repeated Laplacian calls with the cubic reaction under symmetric splitting (Fig.~\ref{fig:strang}).

\begin{figure}[H]
\centering
\begin{subfigure}[b]{0.63\textwidth}
  \centering
  \includegraphics[width=\linewidth,
    trim={0cm 1cm 0cm 0cm},
    clip]{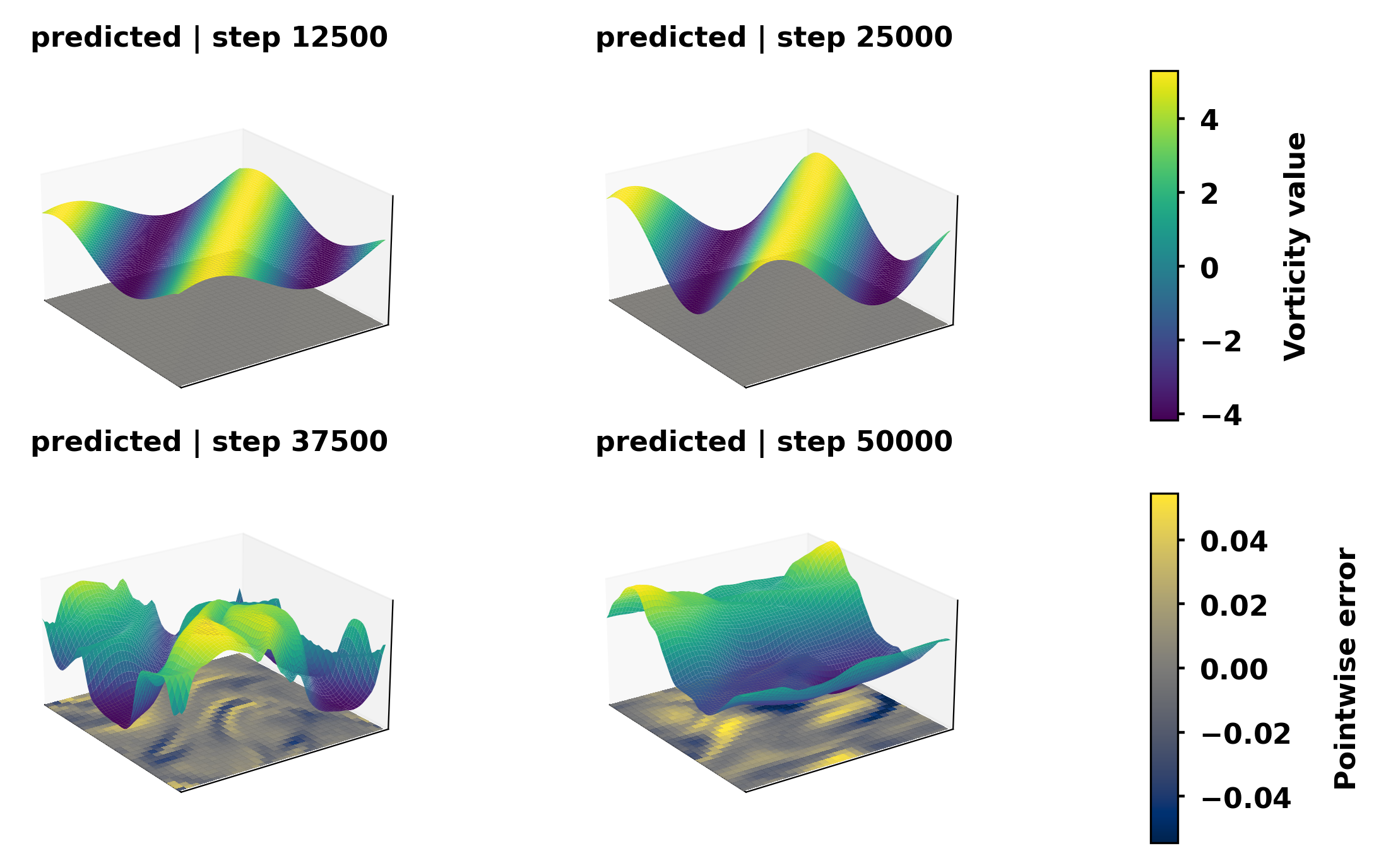}
  \caption{Vorticity snapshots (predicted / reference / normalized pointwise error) at steps $12500$, $25000$, $37500$, and $50000$.}
  \label{fig:ns_steps}
\end{subfigure}\hfill
\begin{subfigure}[b]{0.34\textwidth}
  \centering
  \includegraphics[width=\linewidth]{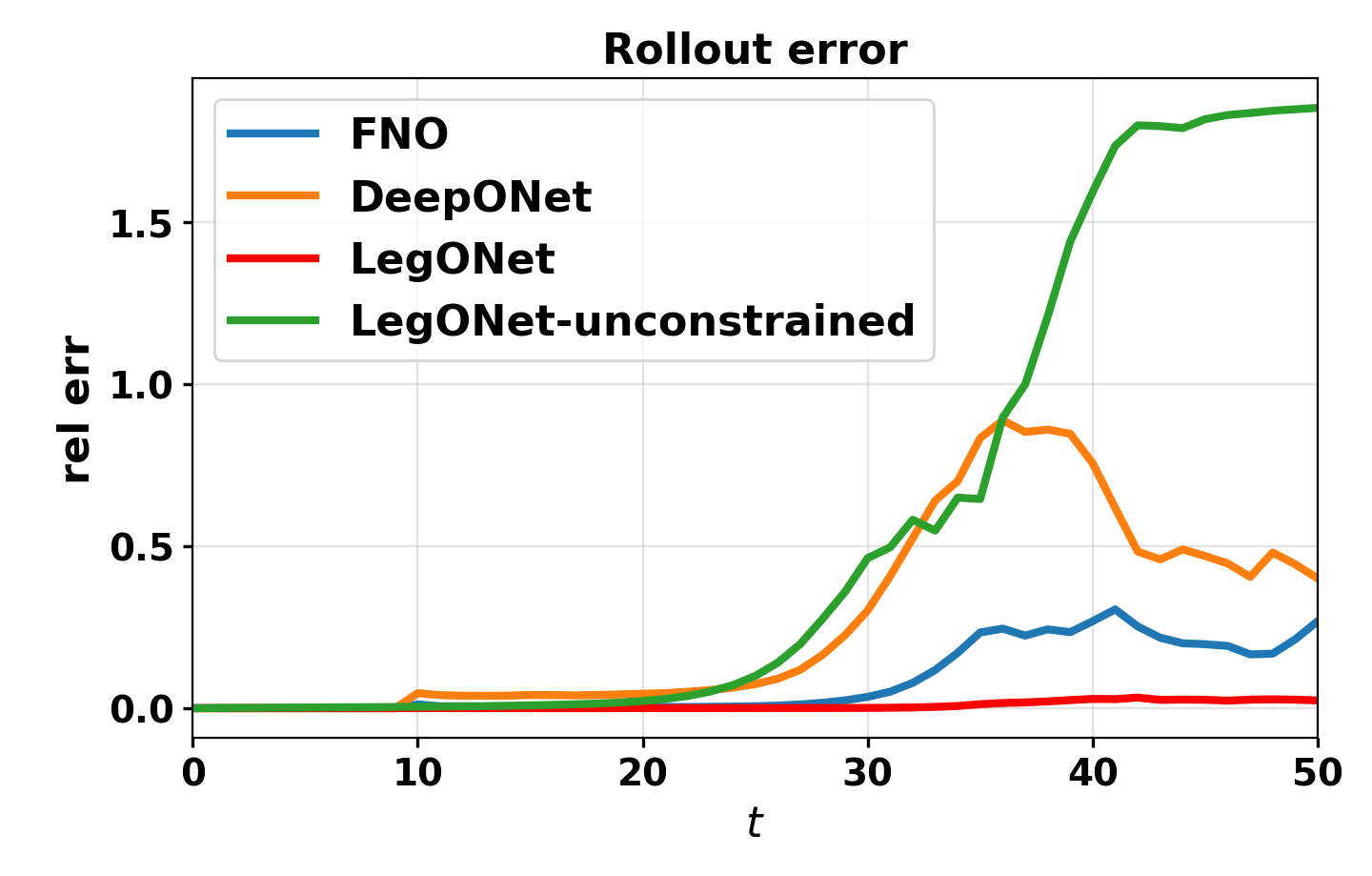}

  \vfill

  \includegraphics[width=\linewidth]{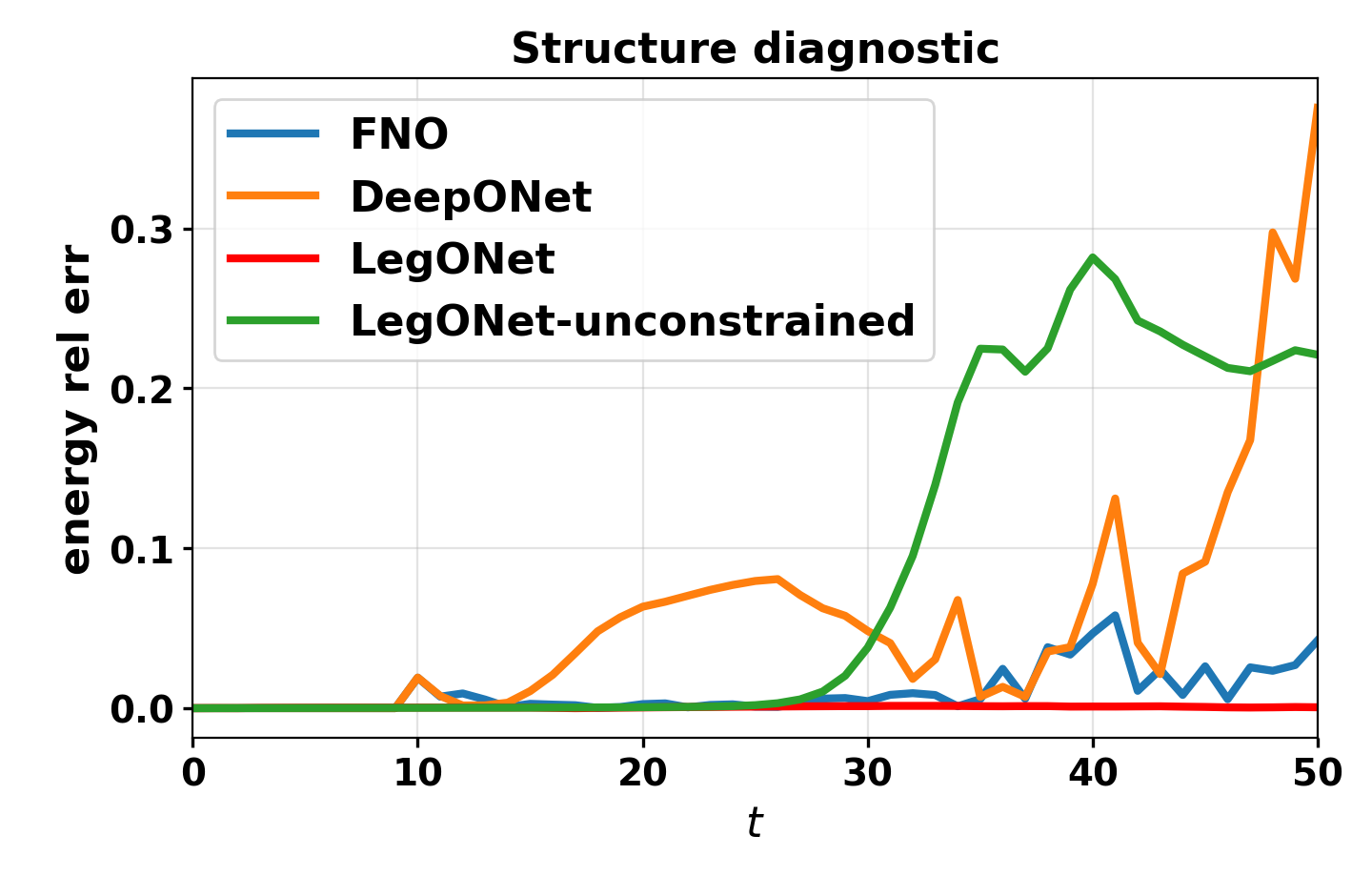}
  \caption{Closed-loop rollout error and energy diagnostic.}
  \label{fig:ns_rollout_and_energy}
\end{subfigure}
\caption{\textbf{2D forced Navier--Stokes: long-horizon turbulent rollouts.}
Left: predicted and reference vorticity snapshots with normalized pointwise error.
Right: closed-loop rollout error and energy diagnostic comparing LegONet, an unstructured diffusion-block ablation, FNO, and DeepONet over $t\in[0,50]$.}
\label{fig:ns_steps_and_relerr}
\end{figure}

Figs.~\ref{fig:sh3d_ref_iso} and \ref{fig:sh3d_learn_iso} compare multi-level isosurfaces of reference and LegONet solutions using identical levels and a shared color scale.
The two trajectories are visually well aligned throughout the rollout. In both cases, the solution evolves from random initial structure toward coherent pattern formation, and the resulting morphology is difficult to distinguish by eye. This agreement is notable because the solver handles a stiff higher-order operator through repeated composition of pretrained mechanism blocks on the shared representation.

We further examine robustness to shifts in the initial-condition distribution. Three settings are considered: in-distribution Gaussian fields, a mild out-of-distribution shift that increases high-frequency content (OOD1), and a stronger shift based on piecewise-constant fields with sharp interfaces (OOD2). 
The full construction of the OOD priors and sampling details are provided in the Supplementary Information.
As shown in Fig.~\ref{fig:sh3d_relerr}, rollout errors remain at the $10^{-5}$ level for the in-distribution setting and OOD1, and remain bounded at the $10^{-4}$ level for OOD2 at $T=30$. These results indicate that the reused primitives are not narrowly tied to a single initialization regime, but retain accuracy under moderate changes in the input prior.

Because this problem combines stiffness, higher-order dynamics and three spatial dimensions, not all baseline paradigms produce reliable closed-loop rollouts under the matched evaluation protocol. PINN and DeepONet baselines are therefore omitted, and we compare only against FNO. As summarized in Fig.~\ref{fig:sh3d_fno_vs_lego_bar}, FNO errors reach approximately $40\%$, whereas LegONet maintains accuracy at the $10^{-4}$ level over long horizons. This final experiment shows that once mechanism blocks are trained on a given baseplate, they can be composed repeatedly to realize more complex operators without retraining a monolithic solver, while preserving the same structured block design across dimensions.

\section*{Discussion}

This work introduces a compositional framework for PDE learning in which plug-and-play operator blocks are trained on a shared coefficient representation and assembled through structure-aware rollout. Rather than learning full trajectories end-to-end, LegONet decomposes a target PDE into mechanism-level components and represents each component by a plug-and-play block with an explicit structural role. Across dimensions, boundary conditions, and PDE families, this formulation supports accurate long-horizon rollout while preserving physically interpretable mechanisms.

A central implication of this formulation is that stability and generalization can be analyzed at the level of blocks rather than only at the level of full trajectories. The finite-horizon analysis separates rollout error into two parts: block mismatch and composition discretization error. The empirical diagnostics support this picture across dissipative, Hamiltonian and mixed systems. This blockwise view offers a degree of transparency that is difficult to obtain in monolithic operator learners, where stability typically emerges only indirectly from training data, regularization and architecture choice.

At the same time, the framework inherits several structural constraints. First, compositional reuse requires a compatible baseplate representation. Changes in geometry, boundary type or trial space therefore require the corresponding baseplate construction. Second, the mechanism library is necessarily finite. New nonlinearities, constraints, or nonlocal interactions require additional block pretraining. Third, although symmetric splitting improves stability, composition error can still accumulate under strong non-commutativity or stiff multi-physics coupling, potentially requiring smaller time steps or higher-order schemes. Finally, block pretraining depends on access to trusted reference operators, so inaccuracies in the underlying discretization can propagate into the learned primitives.

\begin{figure}[H]
\centering

\begin{subfigure}[t]{\textwidth}
  \centering
  \includegraphics[width=\linewidth]{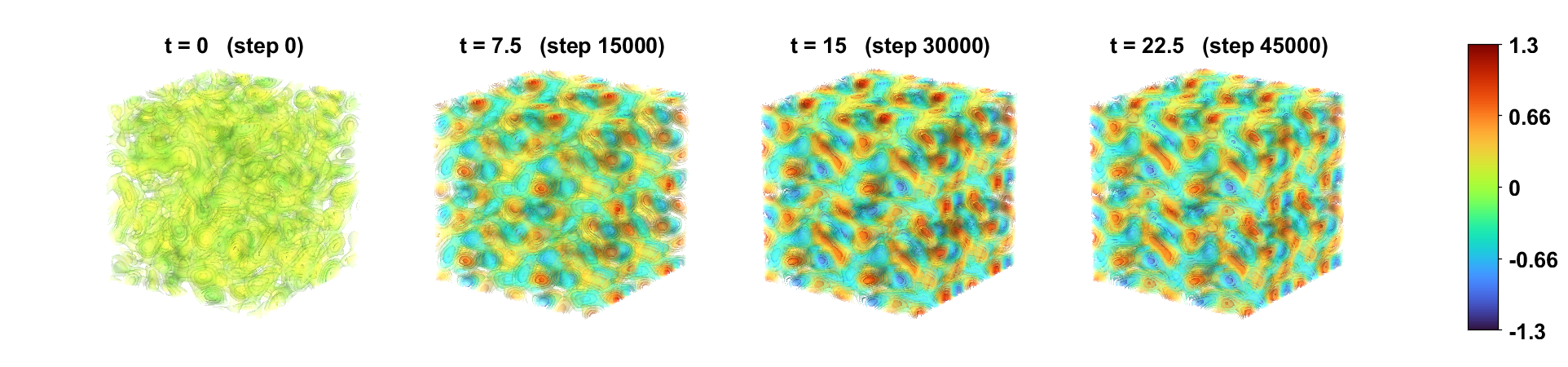}
  \caption{Reference isosurfaces (fixed levels; shared color scale).}
  \label{fig:sh3d_ref_iso}
\end{subfigure}

\vspace{3pt}

\begin{subfigure}[t]{\textwidth}
  \centering
  \includegraphics[width=\linewidth]{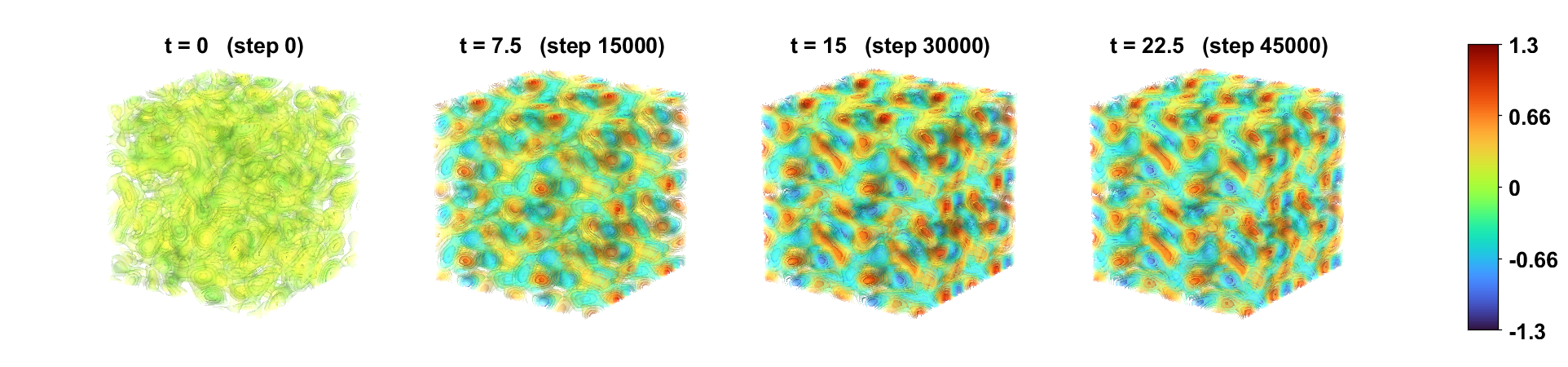}
  \caption{LegONet isosurfaces (same levels and color scale as (a)).}
  \label{fig:sh3d_learn_iso}
\end{subfigure}

\vspace{3pt}

\begin{subfigure}[t]{0.49\textwidth}
  \centering
  \includegraphics[width=\linewidth]{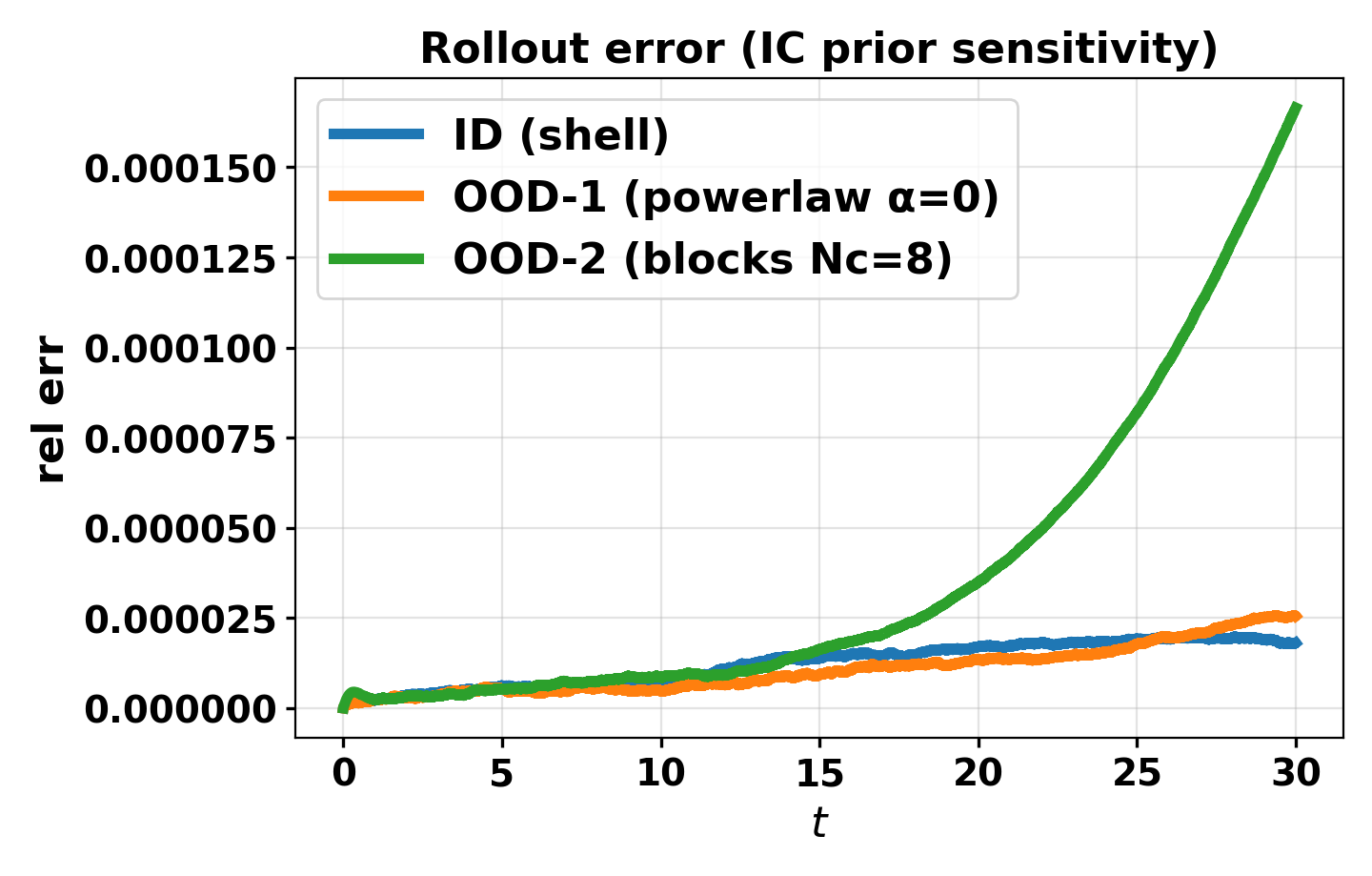}
  \caption{Closed-loop rollout error over time.}
  \label{fig:sh3d_relerr}
\end{subfigure}\hfill
\begin{subfigure}[t]{0.49\textwidth}
  \centering
  \includegraphics[width=\linewidth]{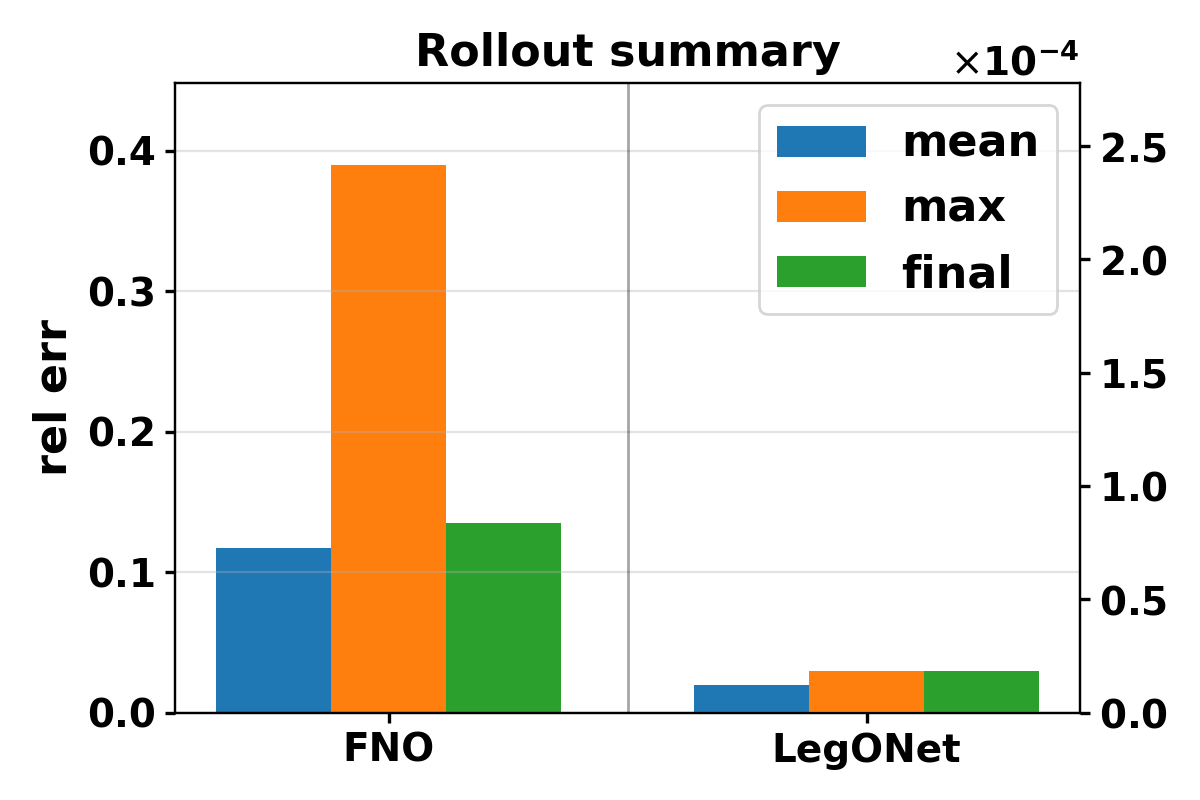}
  \caption{Rollout summary: LegONet vs.\ FNO.}
  \label{fig:sh3d_fno_vs_lego_bar}
\end{subfigure}

\caption{\textbf{3D Swift--Hohenberg: stiff operator reuse and OOD initial conditions.}
Figs. \ref{fig:sh3d_ref_iso}--\ref{fig:sh3d_learn_iso}: reference and LegONet trajectories visualized by multi-level isosurfaces using identical global levels and a shared color scale.
Fig. \ref{fig:sh3d_relerr}: closed-loop rollout error under in-distribution and shifted initial-condition priors (Supplementary Information).
Fig. \ref{fig:sh3d_fno_vs_lego_bar}: rollout summary (mean / max / final-time) comparing LegONet with the supervised FNO baseline on $[0,30]$.}
\label{fig:sh3d}
\end{figure}

These limitations point to several directions for future work. Automated block discovery and selection could reduce reliance on manual mechanism specification. Extending baseplates to more general geometries and adaptive discretizations would broaden applicability. Data-adaptive or state-dependent structure operators may further improve robustness in strongly nonlinear regimes. Expanding the block library toward constrained, stochastic and multiphysics operators would further increase the range of systems that can be assembled within the same framework. More broadly, because LegONet exposes plug-and-play blocks on a common representation, it suggests a path toward community-built libraries of interoperable scientific operators. With open implementations and sustained community contribution, such libraries could eventually mature into practical, immediately usable packages for scientific computing.

Overall, LegONet points to a different way of building learned PDE solvers. Instead of training a monolithic surrogate for each new system, one can train and reuse structure-preserving operator blocks that remain composable, diagnosable and extensible. In that sense, the contribution of this work is not only a new model, but also a step toward plug-and-play mechanism-level infrastructure for scientific machine learning.

\section*{Methods}\label{sec:Methodology}

LegONet constructs neural PDE solvers by composing pretrained operator blocks on a shared coefficient-space representation.
The mechanism decomposition of the PDE, the coefficient representation, and the generator-induced block template for evolution blocks are introduced in
\eqref{eq:intro_pde_sum}--\eqref{eq:intro_block_form}.
Here we specify how these ingredients are instantiated and combined during training and rollout.

A baseplate is the tuple $(u_{\mathrm{lift}},\Phi_b,\mathcal{P}_b)$, which defines a boundary-adapted representation of the solution together with the maps between fields and coefficients.
Given coefficients $\mathbf a\in\mathbb{R}^K$, the operator $\Phi_b$ reconstructs the homogeneous field component in the trial space, while $\mathcal{P}_b$ maps fields back to coefficients.
All learned blocks act only on $\mathbf a$.
Changing the physical-space resolution affects only the evaluation of $\Phi_b$ and $\mathcal{P}_b$ on the grid, while leaving the underlying block representation unchanged.

Throughout, non-bold symbols such as $u(\cdot,t)$ denote continuous fields, whereas bold symbols denote coefficient or nodal vectors.
In particular, $\mathbf a(t)\in\mathbb{R}^K$ denotes the coefficient state, and $\mathbf u(t):=\bigl(u(x_q,t)\bigr)_{q=1}^{Q}$ denotes the corresponding nodal evaluation vector on the common nodes $\{x_q\}_{q=1}^{Q}$.
At discrete times $t_n=n\Delta t$, we write $\mathbf a_n:=\mathbf a(t_n)$ and $\mathbf u_n:=\mathbf u(t_n)$.
When convenient, $u_n$ denotes the continuous field $u(\cdot,t_n)$, while $\mathbf u_n$ always denotes its nodal evaluation vector.
Accordingly, rollout dynamics are analyzed on the coefficient state $\mathbf a_n$, whereas accuracy and structure diagnostics are reported on the reconstructed nodal field $\mathbf u_n$.

\subsection*{Boundary-adapted baseplate}\label{sec:baseplate_eval}

The first role of the framework is to separate boundary handling from mechanism learning. To do this, all computation is organized around the coefficient state $\mathbf{a}\in\mathbb{R}^K$ in \eqref{eq:intro_coeff_rep}, while the baseplate $b$ specifies the boundary-adapted trial space and the transforms associated with it. In this way, boundary compliance is handled by construction, and learned blocks can focus only on plug-and-play mechanism modeling.

In particular, given $\mathbf{a}$, we reconstruct the homogeneous component on evaluation nodes and then recover the full field by adding a lifting term,
\begin{equation}\label{eq:method_reconstruct}
\mathbf{u}_0 \;=\; \Phi_b \mathbf{a}, \qquad
\mathbf{u} \;=\; \mathbf{u}_{\mathrm{lift}} + \mathbf{u}_0 ,
\end{equation}
where $\mathbf{u}_{\mathrm{lift}}$ is the vector of nodal evaluations of the lifting function $u_{\mathrm{lift}}$ at the quadrature/grid points $\{x_q\}_{q=1}^Q$, which encodes the prescribed non-homogeneous boundary data (when present), and $\mathbf{u}_0$ satisfies the corresponding homogeneous boundary conditions by construction of $\{\phi_k^{(b)}\}_{k=1}^K$.
%Let $\{x_q\}_{q=1}^{Q}$ denote the evaluation nodes (grid points or quadrature nodes).
In practice, $\Phi_b\in\mathbb{R}^{Q\times K}$ is the basis evaluation matrix, $(\Phi_b)_{qk}=\phi_k^{(b)}(x_q)$, such as Fourier for periodic boundaries, cosine for Neumann boundaries, and boundary-adapted polynomials for Dirichlet boundaries (Fig. \ref{fig:block}).

Conversely, grid values are mapped back to coefficients through the baseplate projection
\begin{equation}\label{eq:method_project}
\mathbf{a} \;=\; \mathcal{P}_b(\mathbf{u}),
\end{equation}
implemented by the standard modal transform or discrete $L^2$ projection associated with the chosen basis (FFT/DCT in Fourier/cosine bases
and quadrature-based $L^2$ projection in polynomial bases).

Nonlinear mechanisms are evaluated by a reconstruct--evaluate--project step. We first reconstruct $\mathbf{u}_0=\Phi_b\mathbf{a}$, then apply the nonlinearity pointwise on the nodes, and finally project the result back with $\mathcal{P}_b$. In Fourier- and cosine-based settings, nonlinear products are computed pseudospectrally and de-aliased by the $2/3$ rule. In polynomial bases, nonlinearities are evaluated at quadrature nodes and projected by the discrete $L^2$ inner product.

The key consequence is that every learned block shares the same input–output form:
\[
F^{\boldsymbol{\theta}}:\mathbb{R}^K\to\mathbb{R}^K.
\]
The baseplate enforces boundary compatibility through the basis and lifting, while the blocks learn plug-and-play dynamics on a fixed coefficient representation.

\begin{remark}\label{rem:baseplate_transfer}
The boundary-adapted baseplate makes boundary conditions explicit and decouples them from mechanism learning, but it makes blocks baseplate-specific through $\Phi_b$ and $\mathcal{P}_b$. A natural next step is to enable transfer across baseplates by learning or deriving mappings between coefficient representations.
\end{remark}

% ============================================================
\subsection*{Operator block decomposition}\label{sec:template_structure}

The second role of the framework is to separate mechanism modeling from full-solver training. Starting from the mechanism decomposition in \eqref{eq:intro_pde_sum}, the lifted representation $\mathbf u=\mathbf u_{\mathrm{lift}}+\Phi_b\mathbf{a}$ induces coefficient-space reference blocks
\begin{equation}\label{eq:ref_dyn}
\mathbf{a}_t \;=\; \sum_{i=1}^{N_{\mathrm{blk}}} F_i^{\mathrm{ref}}(\mathbf{a}),
\qquad
F_i^{\mathrm{ref}}(\mathbf{a})
\;:=\;
\mathcal{P}_b\!\Big(L_i\big(\mathbf u_{\mathrm{lift}}+\Phi_b\mathbf{a}\big)\Big)\in\mathbb{R}^K,
\end{equation}
so each mechanism contributes an additive vector field on the shared state $\mathbf{a}$.
LegONet replaces these reference updates $F_i^{\mathrm{ref}}$ with learned blocks $F_i^{\boldsymbol{\theta}}:\mathbb{R}^K\to\mathbb{R}^K$ and assembles the reduced dynamics from their sum:
\[
\mathbf{a}_t \;=\; \sum_{i=1}^{N_{\mathrm{blk}}} F_i^{\boldsymbol{\theta}}(\mathbf{a}).
\]

Although all blocks act on the same coefficient state, they play different roles. Evolution blocks contribute directly to the reduced dynamics $\mathbf{a}_t$.
These are instantiated either as dissipative E-blocks of the form $-G\nabla E$ (E-blocks), conservative H-blocks of the form $J\nabla H$ (H-blocks), or residual R-blocks when a scalar-generator form is not natural. Auxiliary operator maps also act in coefficient space, but they are used inside other components rather than appearing as separate evolution terms. Examples include spatial differentiation, projections, and algebraic solves such as Poisson inversion.

Rather than regressing a free vector field $\mathbf{a}\mapsto F_i^{\mathrm{ref}}(\mathbf{a})$,
we parameterize evolution blocks by scalar generators and fixed, baseplate-consistent structure operators.
Concretely, we consider the general coefficient-space form
\[
F_i^{\boldsymbol{\theta}}(\mathbf{a})
=
-\,G_{i}\,\nabla_{\mathbf{a}} E_{i}^{a,\boldsymbol{\theta}}(\mathbf{a})
\;+\;
J_{i}\,\nabla_{\mathbf{a}} H_{i}^{a,\boldsymbol{\theta}}(\mathbf{a})
\;+\;
R_{i}^a(\mathbf{a}).
\]
Here $G_i\in\mathbb{R}^{K\times K}$ is symmetric positive semidefinite and $J_i\in\mathbb{R}^{K\times K}$ is skew-symmetric, so the block inherits dissipative or conservative structure at the discrete level.
The residual term $R_i^a$ captures effects that are not naturally expressed by the chosen scalar-generator form, such as forcing or closure terms.
The corresponding block-level dissipation, conservation, and Strang-composition consequences are summarized in the Supplementary Information.

Note that the expression above is a general template. In practice, to maximize reuse and enable plug-and-play composition,
we typically instantiate each block as a single-purpose component that activates only one term:
an E-block ($-G\nabla E$), an H-block ($J\nabla H$), or an R-block ($R$).
This makes the learned blocks easier to interpret and easier to reuse across PDE assemblies.

When $R_i^a$ is available in closed form, we evaluate it directly on the baseplate.
Otherwise, we parameterize $R_{i}^{a,\boldsymbol{\theta}}:\mathbb{R}^K\to\mathbb{R}^K$
and train it by the same operator-matching objective used for structured blocks.

\begin{remark}
    Although LegONet is designed for assembling time-evolution mechanisms, the same block principle also applies to non-evolution operator maps that act purely in space.
    In our experiments, for example, the Poisson inversion block approximates $(-\Delta)^{-1}$ on the retained modes.
    The operator-matching view is therefore not limited to temporal generators.
\end{remark}

% ============================================================
\subsection*{Trajectory-free block pretraining}\label{sec:block_pretraining}

Once the block roles are fixed, training is carried out independently at the mechanism level rather than on full trajectories. Each learned evolution block is pretrained offline by instantaneous operator matching on the chosen baseplate. This makes training modular and allows the same block to be reused across multiple PDE assemblies supported by the same coefficient representation.

We sample admissible coefficient states $\mathbf{a}\sim\mu_b$ and compute the corresponding reference targets
$F_i^{\mathrm{ref}}(\mathbf{a})$ from a trusted discretization consistent with $(u_{\mathrm{lift}},\Phi_b,\mathcal{P}_b)$.
The block parameters are learned by
\begin{equation}\label{eq:method_loss}
\min_{\boldsymbol{\theta}}\;
\mathbb{E}_{\mathbf{a}\sim\mu_b}\!\left[
\big\|F_i^{\boldsymbol{\theta}}(\mathbf{a})-F_i^{\mathrm{ref}}(\mathbf{a})\big\|_{2}^2
\right].
\end{equation}
The sampling distribution $\mu_b$ specifies a training protocol on the boundary-compatible coefficient space.
In our experiments, the default choice is a spectral-decay Gaussian prior, which covers smooth states while controlling modal amplitudes. Auxiliary operator maps on the same retained modes are either evaluated in closed form or trained by the same objective \eqref{eq:method_loss}.

In practice, we approximate the expectation in \eqref{eq:method_loss} with an empirical risk over $M$ i.i.d.\ coefficient samples
$\{\mathbf{a}^{(m)}\}_{m=1}^{M}$ and optimize it with mini-batch stochastic gradient methods:
\begin{equation}\label{eq:method_loss_erm}
\min_{\boldsymbol{\theta}}\;
\frac{1}{M}\sum_{m=1}^{M}
\big\|F_i^{\boldsymbol{\theta}}\!\big(\mathbf{a}^{(m)}\big)-F_i^{\mathrm{ref}}\!\big(\mathbf{a}^{(m)}\big)\big\|_{2}^2.
%\;\approx\;
%\min_{\boldsymbol{\theta}}\;
%\frac{1}{B}\sum_{b=1}^{B}
%\big\|F_i^{\boldsymbol{\theta}}\!\big(\mathbf{a}_{b}\big)-F_i^{\mathrm{ref}}\!\big(\mathbf{a}_{b}\big)\big\|_{2}^2,
\end{equation}
%where the right-hand objective is the mini-batch loss with batch size $B$ used during training.

This training cost is paid upfront, but it is naturally amortized when the same mechanisms are reused across many equations, parameter settings or solver reconfigurations. In the present study, each block is trained with $20{,}000$ independent coefficient samples and then reused without additional trajectory-level training. A practical consequence of Eq.~\eqref{eq:method_loss} is that block pretraining assumes access to reliable instantaneous operator labels. This is well matched to settings where accurate spectral or high-order discretizations are available, but it may be restrictive in regimes where trusted operator evaluations are difficult to obtain.

\subsection*{Plug-and-play inference}\label{sec:method_inference}

At deployment, LegONet does not retrain a solver end-to-end. Instead, it selects pretrained blocks and composes them on the shared coefficient state associated with the chosen baseplate. In this sense, changing the PDE becomes a selection-and-assembly problem rather than a retraining problem.

Given a target PDE decomposition in \eqref{eq:intro_pde_sum}, inference evolves the reduced state $\mathbf{a}(t)\in\mathbb{R}^K$ from \eqref{eq:intro_coeff_rep}
over a uniform time grid $t_n=n\Delta t$, $n=0,\dots,N_{\mathrm{steps}}$, with $T=N_{\mathrm{steps}}\Delta t$. Each selected mechanism $i\in\{1,\dots,N_{\mathrm{blk}}\}$ contributes a learned coefficient-space vector field
$F_i^{\boldsymbol{\theta}}:\mathbb{R}^K\to\mathbb{R}^K$ (cf.\ \eqref{eq:intro_block_form}), defining the isolated sub-dynamics
\begin{equation}\label{eq:method_subdyn}
%\frac{d\mathbf{a}}{dt} 
\mathbf a_t\;=\; F_i^{\boldsymbol{\theta}}(\mathbf{a}).
\end{equation}
Let $S_{i,\tau}^{\boldsymbol{\theta}}:\mathbb{R}^K\to\mathbb{R}^K$ denote the block update that advances \eqref{eq:method_subdyn} over a substep $\tau\in\{\Delta t/2,\Delta t\}$.
The implementation of $S_{i,\tau}^{\boldsymbol{\theta}}$ is chosen to respect the intended structure of the block, such as dissipative or conservative behavior. The assembled dynamics are then advanced by the symmetric Strang macro-step
\begin{equation}\label{eq:method_rollout}
\mathbf{a}_{n+1}^{\boldsymbol{\theta}}
\;:=\;
S_{\Delta t}^{\boldsymbol{\theta}}\!\big(\mathbf{a}_n^{\boldsymbol{\theta}}\big),
\qquad
S_{\Delta t}^{\boldsymbol{\theta}}
:=
S_{1,\Delta t/2}^{\boldsymbol{\theta}}\circ
S_{2,\Delta t/2}^{\boldsymbol{\theta}}\circ\cdots\circ
S_{N_{\mathrm{blk}},\Delta t}^{\boldsymbol{\theta}}\circ\cdots\circ
S_{2,\Delta t/2}^{\boldsymbol{\theta}}\circ
S_{1,\Delta t/2}^{\boldsymbol{\theta}}.
\end{equation}
The physical field is reconstructed through
\begin{equation}\label{eq:method_u_recon}
\mathbf{u}_n^{\boldsymbol{\theta}}
\;=\;
\mathbf{u}_{\mathrm{lift}}(\cdot,t_n) \;+\; \Phi_b\,\mathbf{a}_n^{\boldsymbol{\theta}}.
\end{equation}
Because every update acts on the coefficient state inside the fixed trial space, boundary conditions are satisfied by construction throughout rollout.

Algorithm~\ref{alg:inference_strang} and Fig.~\ref{fig:workflow} summarize this procedure. When a reference discretization is used, we apply the same composition schedule with $F_i^{\mathrm{ref}}$ and the associated maps
$S_{i,\tau}^{\mathrm{ref}}$. The only difference is the underlying block vector fields. In practice, primitive blocks are often trained for canonical operators (e.g. $u_{xx}$, $uu_x$, $\Delta u$), while the deployment may require scaled, shifted, or composed variants such as
$\nu \Delta u$, $\alpha uu_x$, or higher-order operators
$-(\Delta + k_0^2)^2 u$.
These are implemented through linear combinations
or repeated compositions of primitive blocks. Since scaling preserves Lipschitz continuity and smoothness,
and composition is handled explicitly by splitting,
the error decomposition in Theorem~\ref{thm:main_structure_error} extends directly to
these deployed forms.

\begin{algorithm}[H]
\caption{Rollout by composing pretrained blocks}
\label{alg:inference_strang}
\begin{algorithmic}[1]
\State \textbf{Inputs:} baseplate $(u_{\mathrm{lift}}(\cdot,t),\,\Phi_b,\,\mathcal{P}_b)$; step size $\Delta t$; blocks
$\{F_i^{\boldsymbol{\theta}}\}_{i=1}^{N_{\mathrm{blk}}}$ with within-block maps $\{S_{i,\tau}^{\boldsymbol{\theta}}\}_{i=1}^{N_{\mathrm{blk}}}$.
\State \textbf{Initialize:} $\mathbf{a}_0=\mathcal{P}_b\!\big(\mathbf{u}(\cdot,0)-\mathbf{u}_{\mathrm{lift}}(\cdot,0)\big)$.
\For{$n=0,1,\ldots,N_{\mathrm{steps}}-1$}
    \State \textbf{Strang macro-step:}
    \[
    \mathbf{a}_{n+1}^{\boldsymbol{\theta}}
    =
    S_{\Delta t}^{\boldsymbol{\theta}}\!\left(\mathbf{a}_n^{\boldsymbol{\theta}}\right),
    \qquad
    S_{\Delta t}^{\boldsymbol{\theta}}
    =
    S_{1,\Delta t/2}^{\boldsymbol{\theta}}\circ\cdots\circ S_{N_{\mathrm{blk}},\Delta t}^{\boldsymbol{\theta}}\circ\cdots\circ S_{1,\Delta t/2}^{\boldsymbol{\theta}}.
    \]
    \State \textbf{Reconstruct (optional):}
    $\mathbf{u}^{\boldsymbol{\theta}}(\cdot,t_{n+1})=\mathbf{u}_{\mathrm{lift}}(\cdot,t_{n+1})+\Phi_b\,\mathbf{a}_{n+1}^{\boldsymbol{\theta}}$.
    %with $t_{n+1}=t_n+\Delta t$.
\EndFor
\State \textbf{Output:} $\{\mathbf{a}_n^{\boldsymbol{\theta}}\}_{n=0}^{N_{\mathrm{steps}}}$ and $\{u^{\boldsymbol{\theta}}(\cdot,t_n)\}_{n=0}^{N_{\mathrm{steps}}}$.
\end{algorithmic}
\end{algorithm}

%===================================================
\subsection*{Implementation details}\label{sec:Implementation}

The remaining ingredients specify how the block templates are parameterized on a fixed baseplate $(u_{\mathrm{lift}},\,\Phi_b,\,\mathcal{P}_b)$. Across all settings, the shared coefficient representation is fixed, and learning enters only through the hypothesis class chosen for each block.

Each E-block or H-block follows the generator-induced template in Eq.~\eqref{eq:intro_block_form}, where the learnable objects are scalar
generators $E_{i}^{a,\boldsymbol{\theta}}$ and/or $H_{i}^{a,\boldsymbol{\theta}}$.
Their gradients with respect to $\mathbf{a}$ are obtained by automatic differentiation.
By default, each scalar generator is implemented as a compact map $\mathbb{R}^K\to\mathbb{R}$, typically a multilayer perceptron(MLP), unless additional structure is known.

When the mechanism and the chosen baseplate suggest a lower-complexity form, we restrict the hypothesis class to improve identifiability and data efficiency. For mechanisms close to mode-decoupled responses in the chosen basis, such as linear diffusion or dispersion on aligned spectral bases, we use
a structured quadratic generator
\[
E_{i}^{a,\boldsymbol{\theta}}(\mathbf{a})
=
\tfrac12\,\mathbf{a}^\top K_{i}^{\boldsymbol{\theta}}\,\mathbf{a},
\qquad
K_{i}^{\boldsymbol{\theta}}=\mathrm{diag}(\mathbf{k}_{i}^{\boldsymbol{\theta}})+U_{i}^{\boldsymbol{\theta}}(U_{i}^{\boldsymbol{\theta}})^\top,
\]
which enforces symmetry by construction and combines a dominant diagonal response with a low-rank coupling correction.

When the generator admits an integral form, we learn a pointwise density $\rho_{i}^{\boldsymbol{\theta}}$ and define
\[
H_{i}^{a,\boldsymbol{\theta}}(\mathbf{a})
=
\int_{\Omega}\rho_{i}^{\boldsymbol{\theta}}\!\big(u_0(x)\big)\,dx,
\qquad
u_0(x)=\sum_{k=1}^{K} a_k\,\phi_k^{(b)}(x).
\]
In practice, the integral is evaluated on nodes consistent with the chosen baseplate and projection operator.

The same principle also supports other structure-aware simplifications, depending on the block being learned and the information available on the baseplate.
Examples include restricting generators to diagonal or banded forms in bases where the target operator is nearly diagonal, enforcing
symmetries implied by the domain or boundary conditions, sharing parameters across mode groups, or using low-rank or separable
parameterizations when the mechanism exhibits approximate separability.
These choices are optional, but they help align the learned block with known operator structure while keeping the coefficient representation unchanged.

\begin{remark}\label{rem:learn_GJ}
Fixing $G_i$ and $J_i$ isolates learning to the generators and ensures that each evolution block inherits the intended discrete structure
while remaining reusable across resolutions and PDE configurations supported by the baseplate.
Learning $G_i$ or $J_i$ is possible, but would require constrained parameterizations and additional stability control.
\end{remark}

%====================================================
\subsection*{Structural properties and error bounds}
\label{sec:structural-properties}

The theory in this section clarifies two points that are central to the LegONet design. First, the blockwise structure declared at the level of individual updates is preserved inside the Strang composition. Second, finite-horizon rollout error separates cleanly into a modeling term from block mismatch and a numerical term from time discretization. All standing assumptions and proofs are given in Supplementary Information.

We use $\|\cdot\|_2$ for the Euclidean norm on $\mathbb{R}^K$.
For a physical field $\mathbf v$, we use the baseplate-induced norm  $\|\mathbf v\|_{w,2} \;=\; \left(\sum_{q=1}^{Q} w_q\,|v(x_q)|^2\right)^{1/2}$, where $\{x_q,w_q\}_{q=1}^Q$ are the grid or quadrature nodes and weights used by the baseplate projection.
The field--coefficient relation follows the baseplate reconstruction \eqref{eq:method_reconstruct}:
\[
\mathbf u(t) \;=\; \mathbf u_{\mathrm{lift}}(\cdot,t)+\Phi_b\,\mathbf{a}(t),
\qquad
\mathbf u_n^{\boldsymbol{\theta}} \;=\; \mathbf u_{\mathrm{lift}}(\cdot,t_n)+\Phi_b\,\mathbf{a}_n^{\boldsymbol{\theta}},
\]
where $t_n=n\Delta t$.
The learned rollout $\{\mathbf{a}_n^{\boldsymbol{\theta}}\}_{n=0}^{N_{\mathrm{steps}}}$ is generated by the Strang macro-step \eqref{eq:method_rollout} with within-block maps $S_{i,\tau}^{\boldsymbol{\theta}}$.

\begin{theorem}[Structure-preserving rollout and total error bound]
\label{thm:main_structure_error}
Fix $T>0$ and an integer $N_{\mathrm{steps}}\ge 1$, and set $\Delta t:=T/N_{\mathrm{steps}}$ and $t_n:=n\Delta t$.
Let $\mathcal{K}\subset\mathbb{R}^K$ be compact and assume the standing conditions in Supplementary Information.
Let $\mathbf{a}(t)$ solve the reference reduced dynamics \eqref{eq:ref_dyn} with $\mathbf{a}(0)=\mathbf{a}_0$, and let
$\{\mathbf{a}_n^{\boldsymbol{\theta}}\}_{n=0}^{N_{\mathrm{steps}}}$ be generated by the learned Strang macro-step \eqref{eq:method_rollout}.

\smallskip
\noindent
(i) \emph{Structure preservation.}
If a dissipative block $i$ uses a within-block update satisfying
\[
E_{i}^{a,\boldsymbol{\theta}}\!\big(S_{i,\tau}^{\boldsymbol{\theta}}(\mathbf{a})\big)\le E_{i}^{a,\boldsymbol{\theta}}(\mathbf{a}),
\qquad \forall\,\mathbf{a}\in\mathcal{K},\ \tau\in\{\Delta t/2,\Delta t\},
\]
and a conservative block $i$ uses a within-block update satisfying
\[
H_{i}^{a,\boldsymbol{\theta}}\!\big(S_{i,\tau}^{\boldsymbol{\theta}}(\mathbf{a})\big)= H_{i}^{a,\boldsymbol{\theta}}(\mathbf{a}),
\qquad \forall\,\mathbf{a}\in\mathcal{K},\ \tau\in\{\Delta t/2,\Delta t\},
\]
then the same inequalities and equalities hold at the corresponding substeps inside the Strang schedule \eqref{eq:method_rollout} for every macro-step.

\smallskip
\noindent
(ii) \emph{Total error bound.}
Define the uniform block mismatch on $\mathcal{K}$ by
\[
\varepsilon_i
\;:=\;
\sup_{\mathbf{a}\in\mathcal{K}}
\big\|\mathbf{F}_i^{\boldsymbol{\theta}}(\mathbf{a})-\mathbf{F}_i^{\mathrm{ref}}(\mathbf{a})\big\|_2.
\]
Then there exists $C_T>0$, independent of $\Delta t$, such that
\begin{equation}\label{eq:main_error_bound}
\big\|\mathbf u_{N_{\mathrm{steps}}}^{\boldsymbol{\theta}}-\mathbf u(T)\big\|_{w,2}
\;\le\;
C_T\Big(
T\sum_{i=1}^{N_{\mathrm{blk}}}\varepsilon_i
+
\Delta t^2
\Big).
\end{equation}
\end{theorem}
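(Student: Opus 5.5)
Part (i) is essentially bookkeeping, and I will dispose of it first. Each macro-step \eqref{eq:method_rollout} is a finite composition of within-block maps $S_{i,\tau}^{\boldsymbol{\theta}}$ with $\tau\in\{\Delta t/2,\Delta t\}$. At every substep, the intermediate state $\mathbf b$ is the input to some $S_{i,\tau}^{\boldsymbol{\theta}}$. Under the standing assumption that the learned rollout and all its substep states remain in $\mathcal K$, the hypothesis applies at $\mathbf b$. This directly gives $E_i^{a,\boldsymbol\theta}(S_{i,\tau}^{\boldsymbol\theta}\mathbf b)\le E_i^{a,\boldsymbol\theta}(\mathbf b)$ or the corresponding equality for $H_i$. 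Induction over substeps and then over macro-steps finishes (i). If that invariance is not part of the standing conditions, I will enlarge $\mathcal K$ to a compact neighbourhood of the reference trajectory. I will then use the bound from (ii), together with smallness of $\varepsilon_i$ and $\Delta t$, to keep the iterates inside it.

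For (ii), the plan is to introduce an intermediate trajectory $\mathbf a_n^{\mathrm{ref}}$ and split the error as
\[
\|\mathbf a_n^{\boldsymbol\theta}-\mathbf a(t_n)\|_2\le \|\mathbf a_n^{\boldsymbol\theta}-\mathbf a_n^{\mathrm{ref}}\|_2+\|\mathbf a_n^{\mathrm{ref}}-\mathbf a(t_n)\|_2 .
\]
Here $\mathbf a_n^{\mathrm{ref}}$ is generated by the same Strang schedule with the exact subflows $S_{i,\tau}^{\mathrm{ref}}$ of $F_i^{\mathrm{ref}}$. The lifting term cancels in $\mathbf u^{\boldsymbol\theta}_N-\mathbf u(T)$, so the field error is $\Phi_b(\mathbf a_N^{\boldsymbol\theta}-\mathbf a(T))$. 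Its $\|\cdot\|_{w,2}$ norm is at most $\|\Phi_b\|_{2\to w,2}$ times the coefficient error, and that operator-norm constant is absorbed into $C_T$.

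The second term is the classical Strang splitting error. The $F_i^{\mathrm{ref}}$ are assumed $C^3$ with bounded derivatives on a neighbourhood of $\mathcal K$. Because the composition is symmetric, a BCH or Taylor expansion gives a local error $O(\Delta t^3)$ in the commutator terms. The exact flow is Lipschitz with constant $e^{L\Delta t}$, so Lady Windermere's fan accumulates these local errors to $O(T e^{LT}\Delta t^2)$.

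For the first term, I compare the two one-step maps on $\mathcal K$. Consider the flows of $F_i^{\boldsymbol\theta}$ and $F_i^{\mathrm{ref}}$ over a time $\tau$, both started at $\mathbf b$. By Gronwall, they differ by at most $\tau\,\varepsilon_i e^{L\tau}$, where $L$ is a common Lipschitz constant of the block fields. The within-block maps $S^{\boldsymbol\theta}_{i,\tau}$ may be numerical rather than exact. In that case I add their own consistency error, which the standing conditions assume is at least $O(\tau^3)$ and can be lumped into the $\Delta t^2$ term.

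Next I telescope through the $2N_{\mathrm{blk}}-1$ substeps, using Lipschitz stability $e^{L\tau}$ of the subsequent substeps. This gives
\[
\|S^{\boldsymbol\theta}_{\Delta t}\mathbf b-S^{\mathrm{ref}}_{\Delta t}\mathbf b\|_2\le C\Delta t\sum_i\varepsilon_i ,
\]
since each block appears for total time $\Delta t$. Writing $e_n=\|\mathbf a_n^{\boldsymbol\theta}-\mathbf a_n^{\mathrm{ref}}\|_2$, I then get $e_{n+1}\le e^{L'\Delta t}e_n+C\Delta t\sum_i\varepsilon_i$. A discrete Gronwall argument yields $e_N\le C e^{L'T}T\sum_i\varepsilon_i$. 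Combining the two terms gives \eqref{eq:main_error_bound} with $C_T\sim\|\Phi_b\|e^{L'T}(1+\text{splitting constant})$.

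I expect the main obstacle to be the invariance of $\mathcal K$. Both Gronwall steps need the Lipschitz and derivative bounds to hold along the learned iterates, and the learned iterates are not a priori confined to $\mathcal K$. I will first try a bootstrap argument. Suppose the iterates stay within distance $\delta$ of the reference trajectory up to step $n$. Then the estimate above holds up to step $n$. If $T\sum_i\varepsilon_i+\Delta t^2$ is small enough that $C_T$ times it is below $\delta$, the iterates stay in the neighbourhood at step $n+1$. Failing that, I will simply state the invariance as one of the standing conditions.
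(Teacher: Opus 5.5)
Your proposal is correct and follows essentially the same route as the paper. Both arguments use an intermediate reference Strang trajectory, bound one term by the classical $O(\Delta t^2)$ splitting estimate, and bound the other by a per-substep Gr\"onwall defect $\tau e^{L\tau}\varepsilon_i$ (plus $O(\tau^3)$ integrator error) telescoped into an $O(\Delta t\sum_i\varepsilon_i)$ macro-step defect, followed by a discrete Gr\"onwall recursion and the bounded reconstruction $\Phi_b$. Part (i) and containment in $\mathcal K$ rest on the standing assumptions in both. The only differences are refinements: you derive the macro-step stability from the Lipschitz bounds instead of assuming it, and you impose $C^3$ smoothness for the Strang term, which is genuinely needed and which the paper's Lipschitz-only standing conditions gloss over.
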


The error bound in Eq.~\eqref{eq:main_error_bound} separates modeling and numerics in a form that is directly aligned with the LegONet design. For a fixed horizon $T$, the term $T\sum_i \varepsilon_i$ accumulates mismatch between the learned and reference blocks, whereas the $\Delta t^2$ term is the global splitting error of the Strang macro-step. The quantity $\varepsilon_i$ is a worst-case block mismatch over a compact set $\mathcal K\subset\mathbb R^K$ containing all coefficient states visited up to time $T$. In practice, we do not compute this supremum directly. Instead, we estimate it empirically on a held-out collection of admissible coefficient states $\{\mathbf a^{(m)}\}_{m=1}^M\subset\mathcal K$ through both a maximum and a mean statistic,
\begin{comment}
The bound~\eqref{eq:main_error_bound} separates modeling and numerics.
For a fixed horizon $T$, the term $T\sum_i \varepsilon_i$ accumulates the block approximation errors, whereas the $\Delta t^2$ term is the global splitting error of the Strang macro-step.
The quantity $\varepsilon_i$ is a worst-case mismatch between the learned and reference coefficient-space vector fields, measured uniformly over a compact set $\mathcal K\subset\mathbb R^K$ that contains all coefficient states visited by the reference and learned substeps up to time $T$.
We do not compute the supremum directly.
Instead, we estimate an empirical proxy on a held-out collection of admissible coefficient states $\{a^{(m)}\}_{m=1}^M\subset\mathcal K$ and report both a max and a mean statistic,
\end{comment}
\[
\hat{\varepsilon}_i^{\max}:=\max_{m}\big\|F_i^{\boldsymbol{\theta}}(\mathbf a^{(m)})-F_i^{\mathrm{ref}}(\mathbf a^{(m)})\big\|_2,
\qquad
\hat{\varepsilon}_i^{\mathrm{mean}}:=\frac1M\sum_{m=1}^M\big\|F_i^{\boldsymbol{\theta}}(\mathbf a^{(m)})-F_i^{\mathrm{ref}}(\mathbf a^{(m)})\big\|_2.
\]
Improved block pretraining reduces these empirical mismatches and is typically accompanied by smaller rollout errors.

%===============================================

% ==============================
% Section: Acknowledgments
% ==============================
\section*{Acknowledgments}
% This section was empty in the original file. A placeholder is added.
We would like to thank the support of National Science Foundation (DMS-2533878, DMS-2053746, DMS-2134209, ECCS-2328241, CBET-2347401 and OAC-2311848), and U.S.~Department of Energy (DOE) Office of Science Advanced Scientific Computing Research program DE-SC0023161, the SciDAC LEADS Institute, and DOE–Fusion Energy Science, under grant number: DE-SC0024583.

\section*{Code availability}
The code used in this study, including Python scripts for data analysis, is available at \href{https://github.com/Yooki-YueqiWang/LegONet}{https://github.com/Yooki-YueqiWang/LegONet}.

\bibliography{sn-bibliography}% common bib file
%% if required, the content of .bbl file can be included here once bbl is generated
%%\input sn-article.bbl

\clearpage
\section*{Extended Data}

% Reset counters within Extended Data
\setcounter{equation}{0}
\setcounter{figure}{0}
\setcounter{table}{0}

% Use plain arabic numbering inside ED
\renewcommand{\theequation}{\arabic{equation}}
\renewcommand{\thefigure}{\arabic{figure}}
\renewcommand{\thetable}{\arabic{table}}

% Caption name prefix
\renewcommand{\figurename}{Extended Data Fig.}
\renewcommand{\tablename}{Extended Data Table}

% Caption style (Nature-like)
\captionsetup{
  labelfont={bf,color=black},
  textfont=normalfont,
  labelsep=space
}

% =========================
% Figure 1: 1D Dirichlet experiments
% =========================
\begin{figure}[H]
  \centering

  % Row 1: operator comparisons
  \begin{subfigure}[t]{0.49\linewidth}
    \centering
    \includegraphics[width=\linewidth]{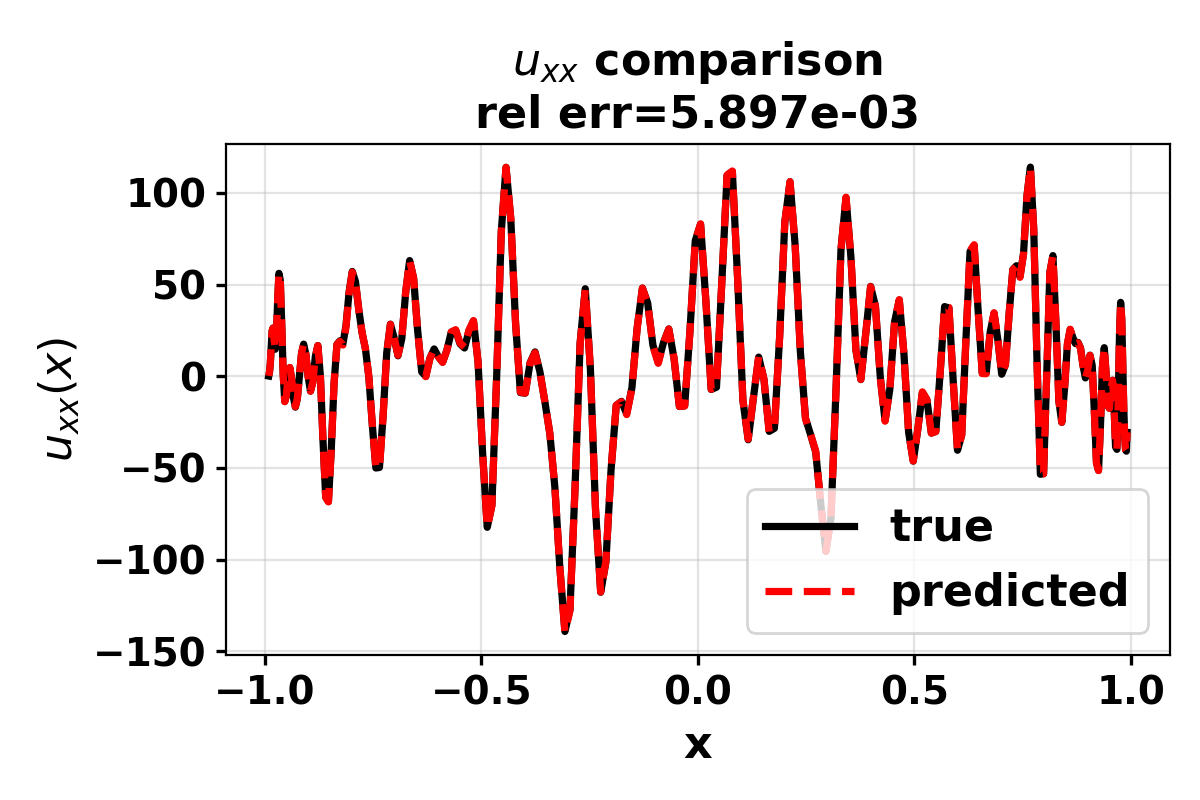}
    \caption{$u_{xx}$ block.}
    \label{fig:1d_uxx_op}
  \end{subfigure}\hfill
  \begin{subfigure}[t]{0.49\linewidth}
    \centering
    \includegraphics[width=\linewidth]{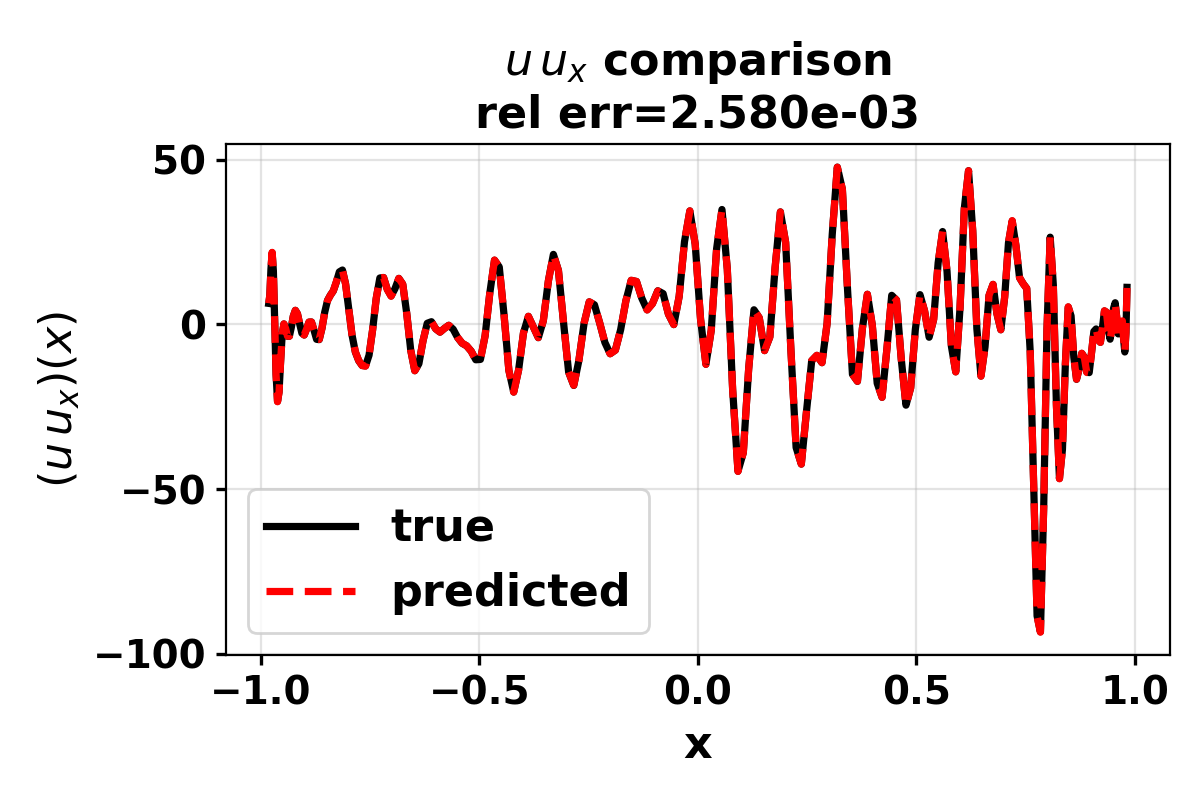}
    \caption{$u\,u_x$ block.}
    \label{fig:1d_uux_op}
  \end{subfigure}

  \vspace{0.8ex}

  % Row 2: energy diagnostics
  \begin{subfigure}[t]{0.49\linewidth}
    \centering
    \includegraphics[width=\linewidth]{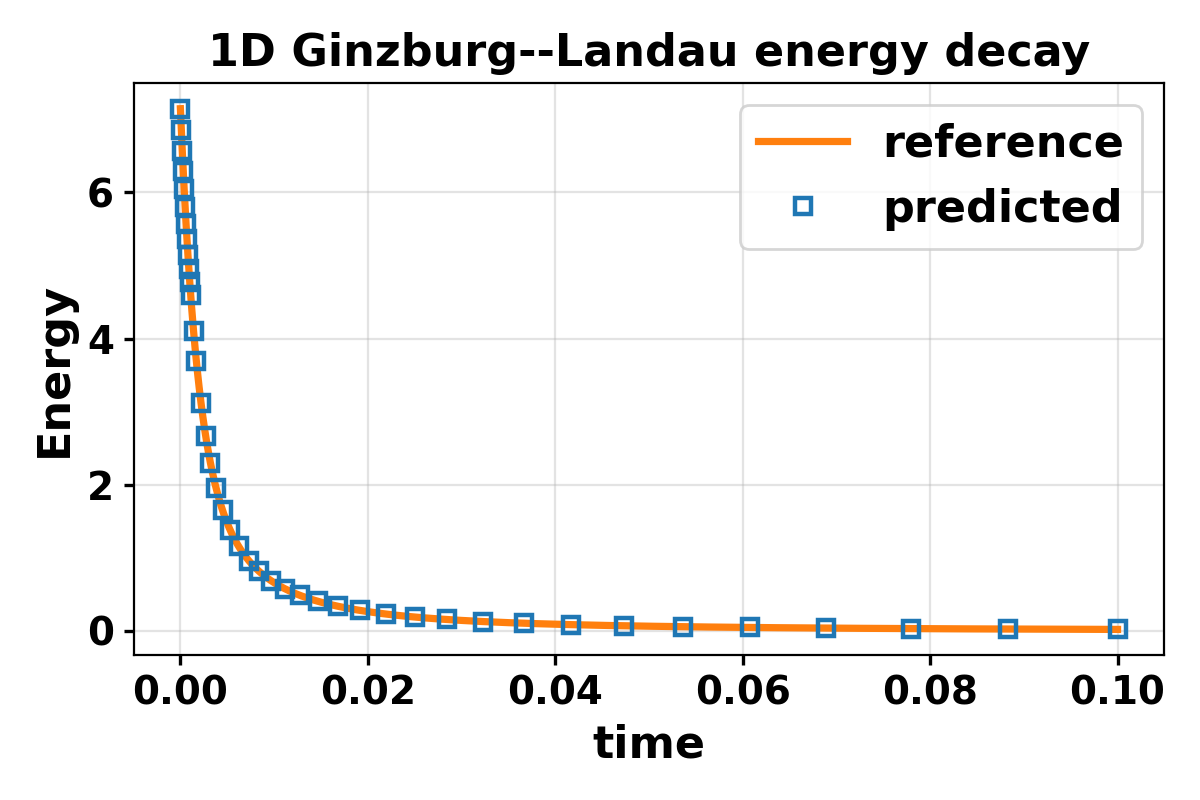}
    \caption{1D Ginzburg--Landau: energy decay.}
    \label{fig:1d_gl_energy}
  \end{subfigure}\hfill
  \begin{subfigure}[t]{0.49\linewidth}
    \centering
    \includegraphics[width=\linewidth]{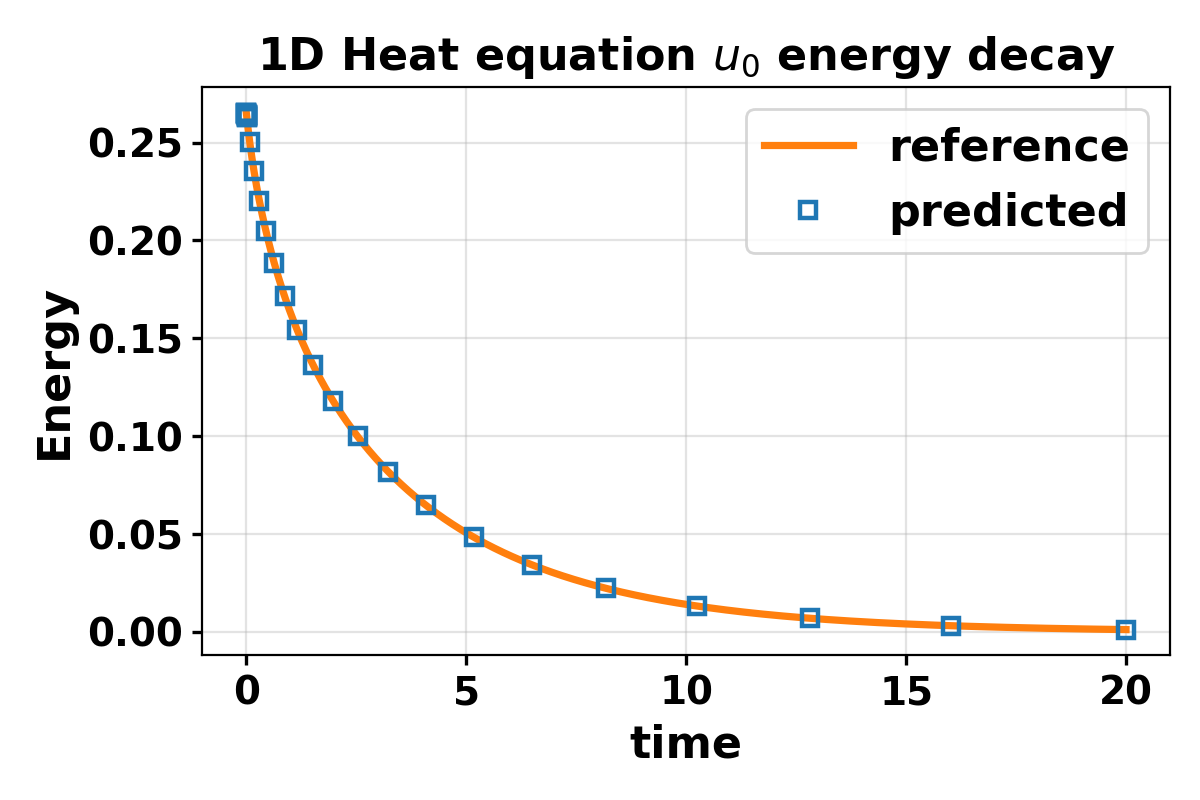}
    \caption{1D heat with time-dependent Dirichlet data: energy of the lifted interior state $u_0$.}
    \label{fig:1d_heat_u0_energy}
  \end{subfigure}

  \caption{
  \textbf{1D Dirichlet benchmarks: block-level operator matching and structure diagnostics.}
  Figs. \ref{fig:1d_uxx_op}--\ref{fig:1d_uux_op}: physical-space comparisons for pretrained blocks on a held-out sample; errors use a weighted $L^2$ norm on Gauss--Legendre nodes.
  Figs. \ref{fig:1d_gl_energy}--\ref{fig:1d_heat_u0_energy}: energy diagnostics for 1D Ginzburg--Landau and for the interior component $u_0$ in the heat equation after time-dependent boundary lifting.
  }
  \label{fig:1d_summary}
\end{figure}

% =========================
% Figure 2: 2D experiments (periodic + Neumann)
% =========================
\begin{figure}[H]
  \centering

  % Row 1: 2D Allen--Cahn
  \begin{subfigure}[t]{0.49\linewidth}
    \centering
    \includegraphics[width=\linewidth]{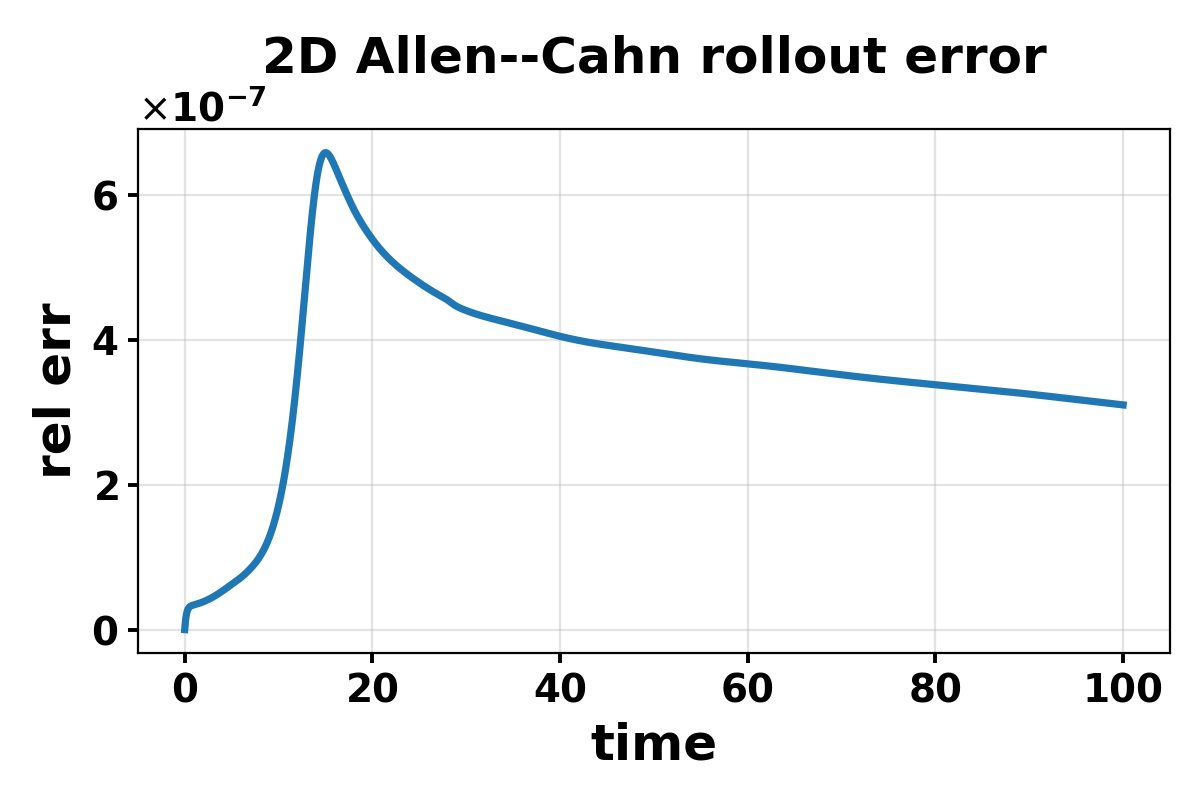}
    \caption{2D Allen--Cahn: rollout relative $L^2$ error.}
    \label{fig:ac2d_relerr}
  \end{subfigure}\hfill
  \begin{subfigure}[t]{0.49\linewidth}
    \centering
    \includegraphics[width=\linewidth]{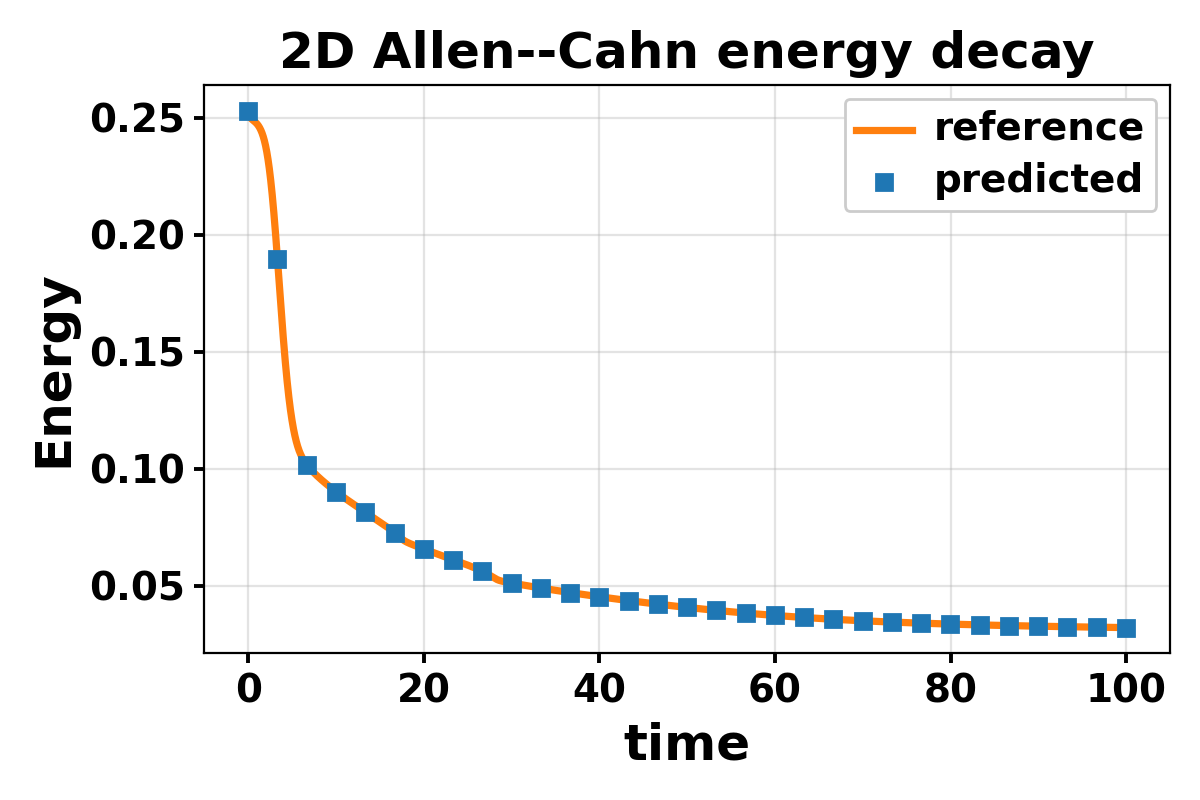}
    \caption{2D Allen--Cahn: energy decay.}
    \label{fig:ac2d_energy}
  \end{subfigure}

  \vspace{0.8ex}

  % Row 2: 2D vector Burgers
  \begin{subfigure}[t]{0.49\linewidth}
    \centering
    \includegraphics[width=\linewidth]{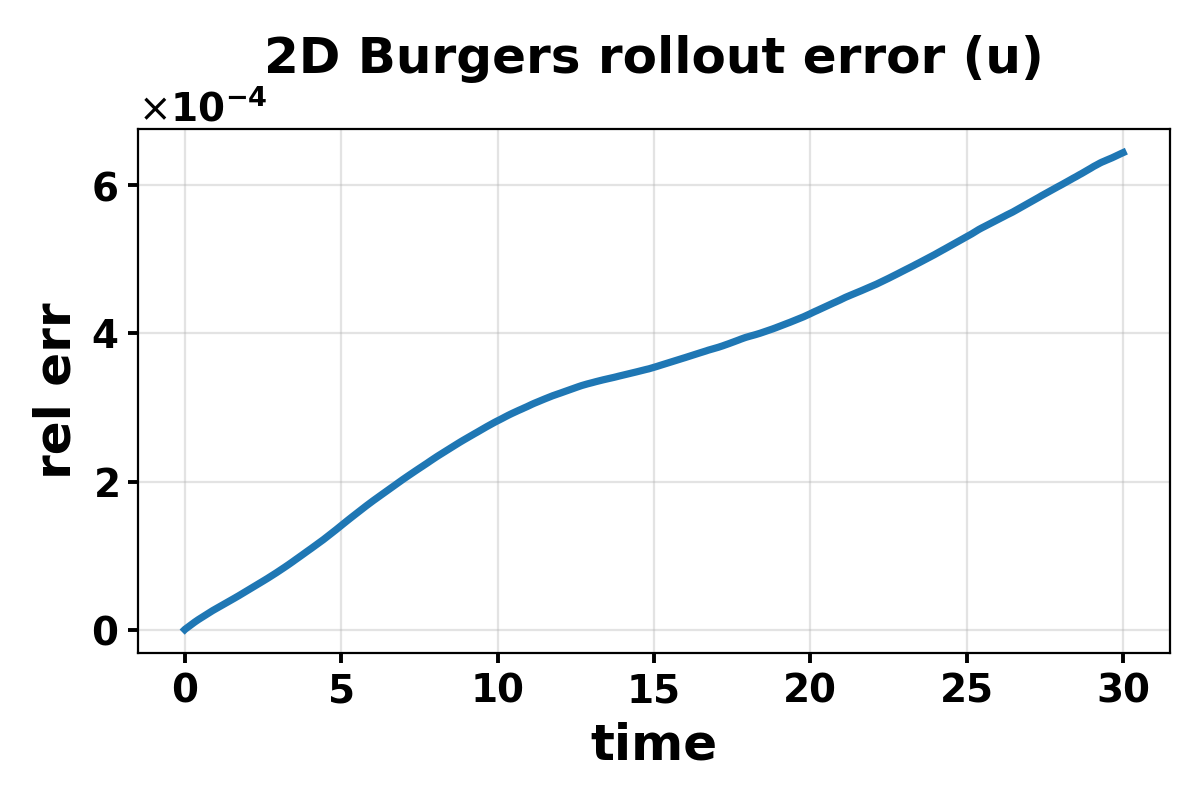}
    \caption{2D vector Burgers: rollout error for $u$.}
    \label{fig:burgers2d_u_relerr}
  \end{subfigure}\hfill
  \begin{subfigure}[t]{0.49\linewidth}
    \centering
    \includegraphics[width=\linewidth]{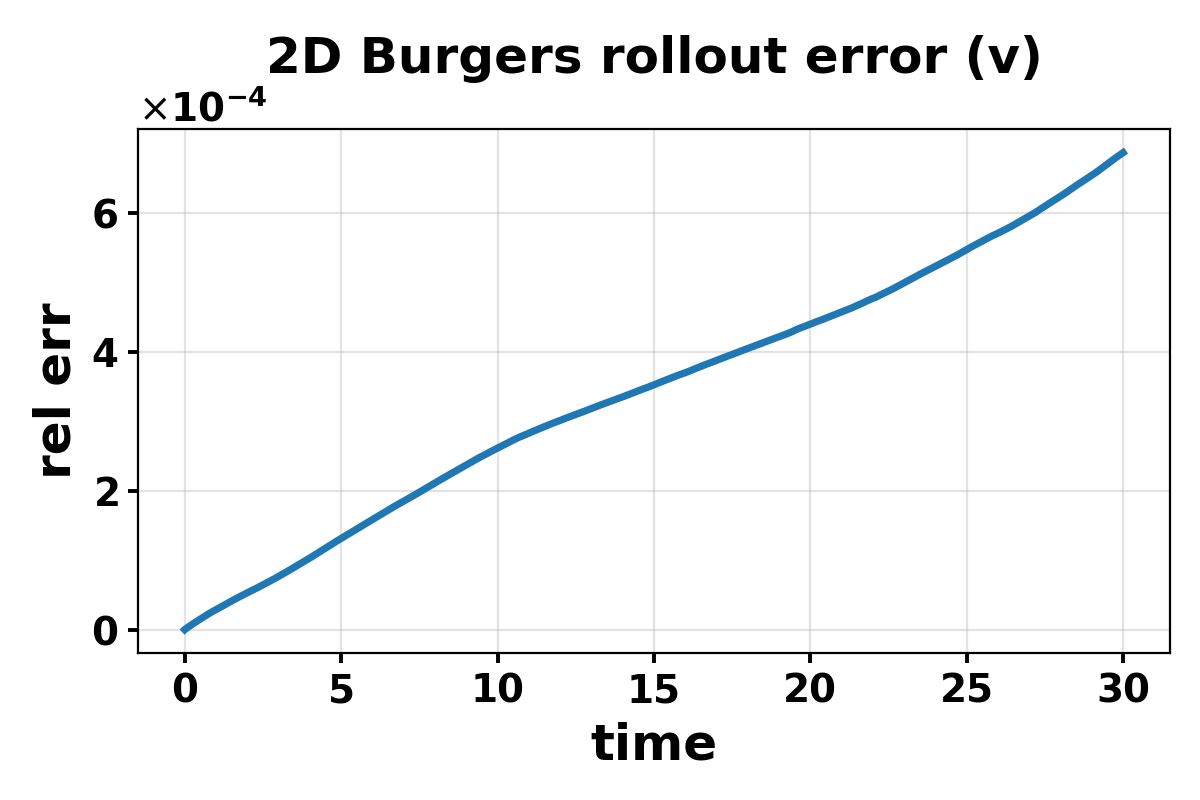}
    \caption{2D vector Burgers: rollout error for $v$.}
    \label{fig:burgers2d_v_relerr}
  \end{subfigure}

  \vspace{0.8ex}

  % Row 3: 2D Neumann (cosine baseplate)
  \begin{subfigure}[t]{0.49\linewidth}
    \centering
    \includegraphics[width=\linewidth]{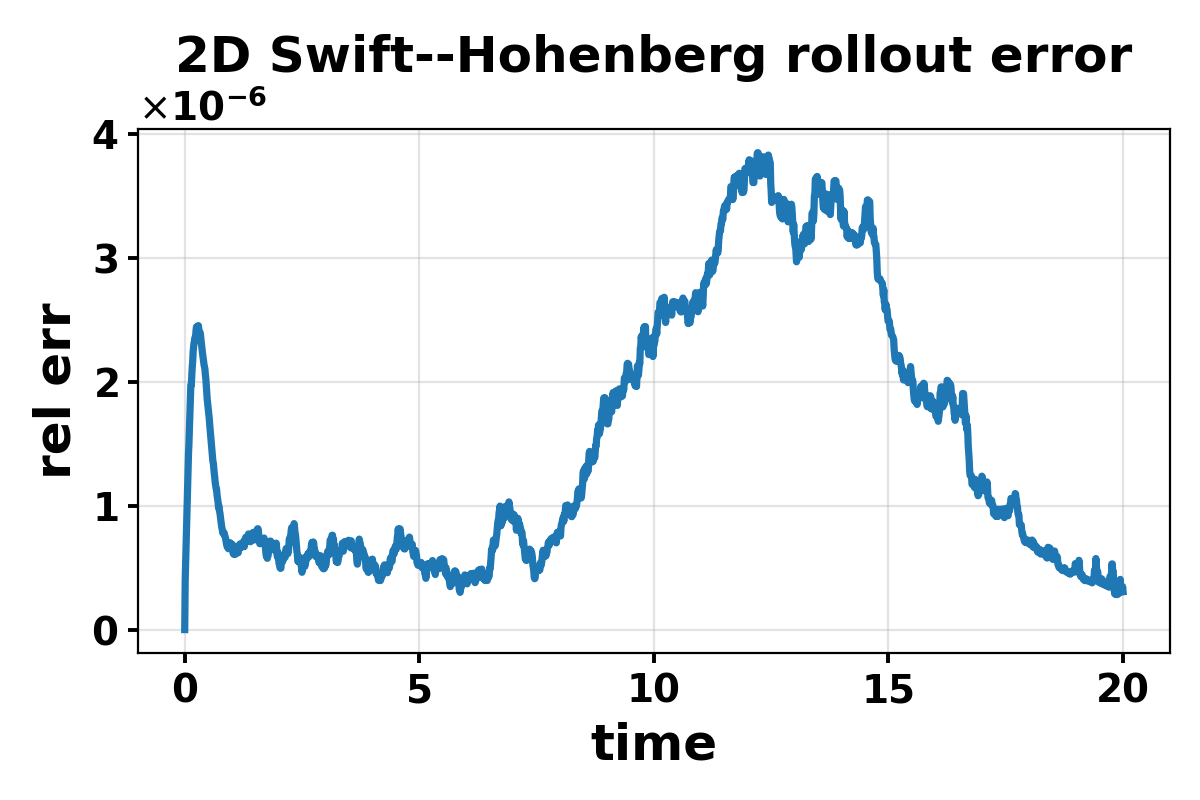}
    \caption{2D Swift--Hohenberg (Neumann): rollout relative $L^2$ error.}
    \label{fig:sh2d_relerr}
  \end{subfigure}\hfill
  \begin{subfigure}[t]{0.49\linewidth}
    \centering
    \includegraphics[width=\linewidth]{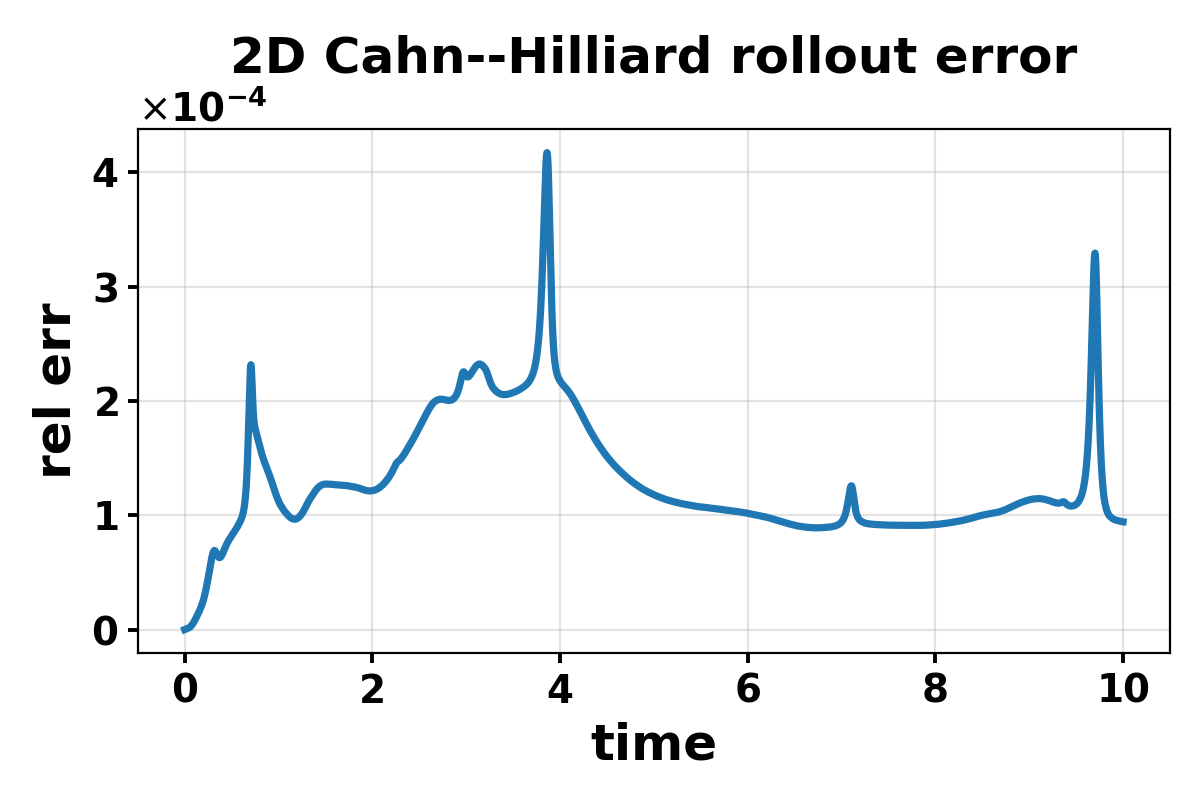}
    \caption{2D Cahn--Hilliard with lifted Neumann data: rollout relative $L^2$ error.}
    \label{fig:ch2d_relerr}
  \end{subfigure}

  \caption{
  \textbf{2D benchmarks: rollout accuracy and energy diagnostics across periodic and Neumann baseplates.}
  Figs. \ref{fig:ac2d_relerr}--\ref{fig:ac2d_energy}: 2D Allen--Cahn: relative $L^2$ trajectory error and Allen--Cahn energy decay.
  Figs. \ref{fig:burgers2d_u_relerr}--\ref{fig:burgers2d_v_relerr}: 2D vector Burgers: relative $L^2$ trajectory error for both velocity components.
  Figs. \ref{fig:sh2d_relerr}--\ref{fig:ch2d_relerr}: 2D Neumann benchmarks in a cosine trial space: Swift--Hohenberg and Cahn--Hilliard rollout error curves.
  }
  \label{fig:2d_summary}
\end{figure}

\clearpage
\section*{Supplementary Information}

\setcounter{equation}{0}
\setcounter{figure}{0}
\setcounter{table}{0}
\renewcommand{\theequation}{S\arabic{equation}}
\renewcommand{\thefigure}{S\arabic{figure}}
\renewcommand{\thetable}{S\arabic{table}}

\section{Proof of Theorem~\ref{thm:main_structure_error}}
\label{sec:appendix_error}

We collect the standing conditions and supporting estimates used in the proof of
Theorem~\ref{thm:main_structure_error}.
Throughout, $\|\cdot\|_2$ denotes the Euclidean norm on $\mathbb{R}^K$, and $\|\cdot\|_{w,2}$ is the fixed physical norm induced by the baseplate.

\begin{assumption}[Standing conditions]\label{ass:standing_app}
Fix $T>0$ and let $\mathcal{K}\subset\mathbb{R}^K$ be compact.
Let $\mathbf{a}(t)$ solve the reference reduced dynamics \eqref{eq:ref_dyn} with $\mathbf{a}(0)=\mathbf{a}_0$.
Set $t_n=n\Delta t$ and $N_{\mathrm{steps}}=T/\Delta t$.
\begin{enumerate}\setlength{\itemsep}{2pt}
\item \textbf{Containment on $\mathcal K$.}
The reference trajectory satisfies $\mathbf{a}(t)\in\mathcal K$ for all $t\in[0,T]$.
Moreover, there exists $\Delta t_0>0$ such that for $0<\Delta t\le \Delta t_0$,
all intermediate states produced by the reference and learned Strang schedules up to time $T$
remain in $\mathcal K$.

\item \textbf{Regularity of block fields.}
For each block $i$, the vector fields $F_i^{\mathrm{ref}}$ and $F_i^{\boldsymbol\theta}$
are Lipschitz on $\mathcal K$ with constants $L_i>0$.
In particular, $F^{\mathrm{ref}}=\sum_{i=1}^{N_{\mathrm{blk}}}F_i^{\mathrm{ref}}$ is Lipschitz on $\mathcal K$.

\item \textbf{Within-block accuracy (second order).}
For each $i$ and each $\tau\in\{\Delta t/2,\Delta t\}$ used by the Strang schedule, there exist one-step maps
$S_{i,\tau}^{\mathrm{ref}}$ and $S_{i,\tau}^{\boldsymbol\theta}$ for the isolated sub-dynamics
$\mathbf a_t=F_i^{\mathrm{ref}}(\mathbf a)$ and $\mathbf a_t=F_i^{\boldsymbol\theta}(\mathbf a)$ such that,
for all $\mathbf a\in\mathcal K$,
\[
\big\|S_{i,\tau}^{\mathrm{ref}}(\mathbf a)-\varphi_{i,\tau}^{\mathrm{ref}}(\mathbf a)\big\|_2\le C_i^{\mathrm{ref}}\tau^{3},
\qquad
\big\|S_{i,\tau}^{\boldsymbol\theta}(\mathbf a)-\varphi_{i,\tau}^{\boldsymbol\theta}(\mathbf a)\big\|_2\le C_i^{\boldsymbol\theta}\tau^{3},
\]
where $\varphi_{i,t}^{\mathrm{ref}}$ and $\varphi_{i,t}^{\boldsymbol\theta}$ denote the exact subflows of
$\mathbf a_t=F_i^{\mathrm{ref}}(\mathbf a)$ and $\mathbf a_t=F_i^{\boldsymbol\theta}(\mathbf a)$, and
the constants $C_i^{\mathrm{ref}},C_i^{\boldsymbol\theta}>0$ are
independent of $\tau$ and $\Delta t$.

\item \textbf{Reference macro-step stability.}
Let $S_{\Delta t}^{\mathrm{ref}}$ be the symmetric Strang composition built from $\{S_{i,\tau}^{\mathrm{ref}}\}_{i=1}^{N_{\mathrm{blk}}}$.
There exist $\Delta t_0>0$ and $L\ge 0$ such that for all $\mathbf a,\mathbf b\in\mathcal K$ and $0<\Delta t\le \Delta t_0$,
\[
\big\|S_{\Delta t}^{\mathrm{ref}}(\mathbf a)-S_{\Delta t}^{\mathrm{ref}}(\mathbf b)\big\|_2
\le (1+L\Delta t)\,\|\mathbf a-\mathbf b\|_2.
\]

\item \textbf{Uniform block mismatch on $\mathcal K$.}
For each $i$,
\[
\varepsilon_i
:=
\sup_{\mathbf a\in\mathcal K}
\big\|F_i^{\boldsymbol\theta}(\mathbf a)-F_i^{\mathrm{ref}}(\mathbf a)\big\|_2
<\infty.
\]

\item \textbf{Bounded reconstruction.}
There exists $C_\Phi>0$ such that $\|\Phi_b \mathbf v\|_{w,2}\le C_\Phi\|\mathbf v\|_2$ for all $\mathbf v\in\mathbb{R}^K$.
\end{enumerate}
\end{assumption}

Assumption~\ref{ass:standing_app}\,(3) is standard in practice.
Each block evolves on the same finite-dimensional coefficient interface, so its isolated sub-dynamics can be advanced by a block-adapted second-order one-step scheme, such as an exact subflow when available or a standard second-order integrator; see \cite{hairer2006structure,sanz2018numerical,strang1968construction}.
Assumption~\ref{ass:standing_app}\,(5) quantifies the uniform block mismatch on $\mathcal{K}$.
Its size depends on the approximation class and on training quality, including the expressiveness of the block model and the coverage and accuracy of the operator-matching samples.
In principle, $\varepsilon_i$ can be made small with richer model classes, more representative training data, and sufficiently effective optimization, provided the target block map is well approximated within the chosen class; see \cite{cybenko1989approximation,hornik1989multilayer}.

After Assumption~\ref{ass:standing_app}, we summarize the block-level structural consequences of the parameterization
\[
F_i^{\boldsymbol{\theta}}(\mathbf a)
=
-\,G_i\nabla_{\mathbf a}E_i^{a,\boldsymbol{\theta}}(\mathbf a)
+
J_i\nabla_{\mathbf a}H_i^{a,\boldsymbol{\theta}}(\mathbf a)
+
R_i^a(\mathbf a).
\]
For clarity, we first state the dissipative and conservative cases in isolation, where the residual term is absent; see
\cite{ambrosio2005gradient,hairer2006structure}.

\begin{property}[Energy dissipation for learned dissipative blocks]
\label{prop:dissipation}
Let $E_i^{a,\boldsymbol{\theta}}:\mathbb{R}^{K}\to\mathbb{R}$ be continuously differentiable and consider the isolated dissipative coefficient dynamics
\begin{equation}\label{eq:dissip_block_theta}
\mathbf a_t
=
-\,G_i\,\nabla_{\mathbf a} E_i^{a,\boldsymbol{\theta}}(\mathbf a),
\qquad
G_i^\top=G_i,\quad G_i\succeq 0 .
\end{equation}
Here, $G_i^\top=G_i$ and $G_i\succeq 0$ mean that $G_i$ is symmetric positive semidefinite.
Then along any trajectory $\mathbf a(t)$ of \eqref{eq:dissip_block_theta}, the learned scalar generator is non-increasing:
\[
\frac{d}{dt}E_i^{a,\boldsymbol{\theta}}\bigl(\mathbf a(t)\bigr)\le 0.
\]
\end{property}

\begin{proof}
By the chain rule,
\[
\frac{d}{dt}E_i^{a,\boldsymbol{\theta}}\bigl(\mathbf a(t)\bigr)
=
\nabla_{\mathbf a}E_i^{a,\boldsymbol{\theta}}(\mathbf a)^\top \mathbf a_t
=
-\nabla_{\mathbf a}E_i^{a,\boldsymbol{\theta}}(\mathbf a)^\top
G_i
\nabla_{\mathbf a}E_i^{a,\boldsymbol{\theta}}(\mathbf a)
\le 0,
\]
because $G_i\succeq 0$ implies $\xi^\top G_i \xi\ge 0$ for all $\xi\in\mathbb R^K$.
\end{proof}

\begin{property}[Hamiltonian conservation for learned conservative blocks]
\label{prop:hamiltonian}
Let $H_i^{a,\boldsymbol{\theta}}:\mathbb{R}^{K}\to\mathbb{R}$ be continuously differentiable and consider the isolated conservative coefficient dynamics
\begin{equation}\label{eq:cons_block_theta}
\mathbf a_t
=
J_i\,\nabla_{\mathbf a} H_i^{a,\boldsymbol{\theta}}(\mathbf a),
\qquad
J_i^\top=-J_i .
\end{equation}
Here, $J_i^\top=-J_i$ means that $J_i$ is skew-symmetric.
Then along any trajectory $\mathbf a(t)$ of \eqref{eq:cons_block_theta}, the learned Hamiltonian is conserved:
\[
\frac{d}{dt}H_i^{a,\boldsymbol{\theta}}\bigl(\mathbf a(t)\bigr)=0.
\]
\end{property}

\begin{proof}
By the chain rule,
\[
\frac{d}{dt}H_i^{a,\boldsymbol{\theta}}\bigl(\mathbf a(t)\bigr)
=
\nabla_{\mathbf a}H_i^{a,\boldsymbol{\theta}}(\mathbf a)^\top \mathbf a_t
=
\nabla_{\mathbf a}H_i^{a,\boldsymbol{\theta}}(\mathbf a)^\top
J_i
\nabla_{\mathbf a}H_i^{a,\boldsymbol{\theta}}(\mathbf a).
\]
For any skew-symmetric matrix $J_i$, one has $\xi^\top J_i \xi=0$ for all $\xi\in\mathbb R^K$.
Hence the derivative vanishes.
\end{proof}

Under Assumption~\ref{ass:standing_app}, the following is a standard consequence of symmetric Strang splitting; see
\cite{strang1968construction,hairer2006structure}.

\begin{lemma}[Strang composition {\normalfont\cite{strang1968construction,hairer2006structure}}]
\label{lem:strang_global_app}
%Let $F^{\mathrm{ref}}(\mathbf a)=\sum_{i=1}^{N_{\mathrm{blk}}}F_i^{\mathrm{ref}}(\mathbf a)$ and let $\varphi^{\mathrm{ref}}_t$ denote the exact flow of $\mathbf a_t=F^{\mathrm{ref}}(\mathbf a)$.
%Let $S_{\Delta t}^{\mathrm{ref}}$ be the symmetric Strang macro-step built from the isolated within-block maps
%$\{S_{i,\tau}^{\mathrm{ref}}\}$ with $\tau\in\{\Delta t/2,\Delta t\}$.
Under Assumption~\ref{ass:standing_app}, for sufficiently small $\Delta t$, there exist constant $C_T^{\mathrm{spl}}>0$, independent of $\Delta t$, such that
\[
\bigl\|\mathbf a_{N_{\mathrm{steps}}}^{\mathrm{ref}}-\mathbf a(T)\bigr\|_2
\le
C_T^{\mathrm{spl}}\,\Delta t^2,
\]
where $\mathbf a_{n+1}^{\mathrm{ref}}=S_{\Delta t}^{\mathrm{ref}}(\mathbf a_n^{\mathrm{ref}})$ and $\mathbf a_0^{\mathrm{ref}}=\mathbf a_0$.
Moreover, suppose that, for a dissipative block $i$, its isolated within-block update satisfies
\[
E_i^{a,\bullet}\!\left(S_{i,\tau}^{\bullet}(\mathbf a)\right)\le E_i^{a,\bullet}(\mathbf a),
\qquad
\forall\,\mathbf a\in\mathcal K,\ \tau\in\{\Delta t/2,\Delta t\},
\]
or, for a conservative block $i$, its isolated within-block update satisfies
\[
H_i^{a,\bullet}\!\left(S_{i,\tau}^{\bullet}(\mathbf a)\right)= H_i^{a,\bullet}(\mathbf a),
\qquad
\forall\,\mathbf a\in\mathcal K,\ \tau\in\{\Delta t/2,\Delta t\},
\]
where $\bullet\in\{\mathrm{ref},\boldsymbol{\theta}\}$.
Then the same inequality or equality holds at the corresponding substeps inside the symmetric Strang schedule.
\end{lemma}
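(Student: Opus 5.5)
The statement to prove is Lemma~\ref{lem:strang_global_app}. It has two parts: a global second-order bound for the reference Strang rollout, and preservation of blockwise monotonicity or invariance inside the schedule. I would handle them separately, the structural part first because it is purely combinatorial.

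For the structural claim, I would unroll the macro-step \eqref{eq:method_rollout} into its sequence of substeps. Write the intermediate states inside one macro-step as $\mathbf b_0=\mathbf a_n^{\bullet}$, $\mathbf b_1=S^{\bullet}_{1,\Delta t/2}(\mathbf b_0)$, and so on. Each substep applies a single within-block map $S^{\bullet}_{i,\tau}$ with $\tau\in\{\Delta t/2,\Delta t\}$ to a state $\mathbf b_j$. By Assumption~\ref{ass:standing_app}(1), every such state lies in $\mathcal K$ for $\Delta t\le\Delta t_0$. The hypothesis $E_i^{a,\bullet}(S^{\bullet}_{i,\tau}(\mathbf a))\le E_i^{a,\bullet}(\mathbf a)$ (respectively equality for $H_i^{a,\bullet}$) can therefore be applied pointwise at $\mathbf a=\mathbf b_j$. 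This gives the inequality or equality at that substep, for every macro-step, by induction on $n$. No claim is made about the composite macro-step, since other blocks may change $E_i$ or $H_i$.

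For the error bound, I would use the standard Lady Windermere's fan argument. Let $\varphi^{\mathrm{ref}}_t$ be the exact flow of $F^{\mathrm{ref}}$. The first step is local consistency: for $\mathbf a\in\mathcal K$,
\[
\|S^{\mathrm{ref}}_{\Delta t}(\mathbf a)-\varphi^{\mathrm{ref}}_{\Delta t}(\mathbf a)\|_2\le C_{\mathrm{loc}}\Delta t^3 .
\]
I would prove this in two pieces. First, replace each $S^{\mathrm{ref}}_{i,\tau}$ by the exact subflow $\varphi^{\mathrm{ref}}_{i,\tau}$. By Assumption~\ref{ass:standing_app}(3) each replacement costs $O(\tau^3)$. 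Each such error is propagated through the remaining subflows, which are Lipschitz with constant $e^{L_i\tau}$ by Gronwall and Assumption~\ref{ass:standing_app}(2). Since there are finitely many substeps, the total is $O(\Delta t^3)$. Second, bound the gap between the exact-subflow Strang composition and $\varphi^{\mathrm{ref}}_{\Delta t}$ by $O(\Delta t^3)$. This is the classical symmetric-splitting estimate of \cite{strang1968construction,hairer2006structure}. It follows from the BCH/Taylor expansion: symmetry kills the $\Delta t^2$ commutator terms.

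The global step then telescopes:
\[
\mathbf a^{\mathrm{ref}}_N-\mathbf a(T)=\sum_{n=0}^{N-1}\Big[(S^{\mathrm{ref}}_{\Delta t})^{N-n-1}S^{\mathrm{ref}}_{\Delta t}(\mathbf a(t_n))-(S^{\mathrm{ref}}_{\Delta t})^{N-n-1}\varphi^{\mathrm{ref}}_{\Delta t}(\mathbf a(t_n))\Big].
\]
Each bracket is bounded using the stability hypothesis Assumption~\ref{ass:standing_app}(4) by $(1+L\Delta t)^{N-n-1}C_{\mathrm{loc}}\Delta t^3\le e^{LT}C_{\mathrm{loc}}\Delta t^3$. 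Summing $N=T/\Delta t$ terms gives $C_T^{\mathrm{spl}}=Te^{LT}C_{\mathrm{loc}}$. For the stability estimate to apply, the intermediate points $(S^{\mathrm{ref}}_{\Delta t})^k\varphi^{\mathrm{ref}}_{\Delta t}(\mathbf a(t_n))$ must lie in $\mathcal K$. I would invoke Assumption~\ref{ass:standing_app}(1) for this, possibly after enlarging $\mathcal K$ slightly and shrinking $\Delta t_0$.

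The main obstacle is the second piece of the local-consistency step. The Strang $O(\Delta t^3)$ defect requires the fields $F_i^{\mathrm{ref}}$ to be $C^2$ (so that commutators and third derivatives are bounded on $\mathcal K$), not merely Lipschitz. My first attempt would be to make this smoothness explicit as part of the "standing conditions." I would then cite the classical result for the $N_{\mathrm{blk}}$-fold symmetric composition, which follows by nesting the two-block case: $S_{1,\Delta t/2}\circ(\text{inner symmetric composition})\circ S_{1,\Delta t/2}$ is again symmetric and consistent, hence second order.
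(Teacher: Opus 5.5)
Your proposal is correct and essentially follows the paper. The paper treats the structural part exactly as you do: the macro-step is a fixed composition of the within-block maps, so each per-substep inequality or equality is inherited at its location in the schedule. It obtains the $\Delta t^2$ bound only by citing \cite{strang1968construction,hairer2006structure}, and your local-$O(\Delta t^3)$-consistency-plus-stability argument is what those references do. You are right that the symmetric-splitting defect needs $C^2$ block fields, not just the Lipschitz continuity of Assumption~\ref{ass:standing_app}(2); the paper leaves this implicit in the citation. If you run the global step recursively, $\|\mathbf a_{n+1}^{\mathrm{ref}}-\mathbf a(t_{n+1})\|_2\le(1+L\Delta t)\|\mathbf a_{n}^{\mathrm{ref}}-\mathbf a(t_{n})\|_2+C_{\mathrm{loc}}\Delta t^3$ with Lemma~\ref{lem:gronwall_app} (as in the paper's proof of Theorem~\ref{thm:main_structure_error}), stability is only needed at pairs $(\mathbf a_n^{\mathrm{ref}},\mathbf a(t_n))$, which Assumption~\ref{ass:standing_app}(1) already places in $\mathcal K$, so you no longer need to enlarge $\mathcal K$ for the fan iterates $(S^{\mathrm{ref}}_{\Delta t})^k(\mathbf a(t_{n+1}))$.
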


\begin{lemma}[Exact subflow perturbation on $\mathcal K$]\label{lem:subflow_perturb}
Assume Assumption~\ref{ass:standing_app}. Fix a block $i$.
For any $\mathbf a\in\mathcal K$ and any $t\ge 0$ such that both subflows stay in $\mathcal K$ on $[0,t]$,
\[
\big\|\varphi_{i,t}^{\boldsymbol\theta}(\mathbf a)-\varphi_{i,t}^{\mathrm{ref}}(\mathbf a)\big\|_2
\le
\frac{e^{L_i t}-1}{L_i}\,\varepsilon_i
\le
t\,e^{L_i t}\,\varepsilon_i.
\]
\end{lemma}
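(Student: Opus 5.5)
The statement to prove is Lemma~\ref{lem:subflow_perturb}. The plan is a standard Grönwall argument comparing the two exact subflows started from the same point $\mathbf a\in\mathcal K$.

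Write $\mathbf x(s):=\varphi_{i,s}^{\boldsymbol\theta}(\mathbf a)$ and $\mathbf y(s):=\varphi_{i,s}^{\mathrm{ref}}(\mathbf a)$ for $s\in[0,t]$. By hypothesis both curves stay in $\mathcal K$, so Assumption~\ref{ass:standing_app}\,(2) and (5) apply pointwise along them. Set $\mathbf e(s):=\mathbf x(s)-\mathbf y(s)$, so that $\mathbf e(0)=0$.

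The key step is to split the derivative by adding and subtracting $F_i^{\mathrm{ref}}(\mathbf x(s))$:
\[
\mathbf e'(s)=\big[F_i^{\boldsymbol\theta}(\mathbf x(s))-F_i^{\mathrm{ref}}(\mathbf x(s))\big]+\big[F_i^{\mathrm{ref}}(\mathbf x(s))-F_i^{\mathrm{ref}}(\mathbf y(s))\big].
\]
The first bracket has norm at most $\varepsilon_i$, since $\mathbf x(s)\in\mathcal K$. The second has norm at most $L_i\|\mathbf e(s)\|_2$, by the Lipschitz bound for $F_i^{\mathrm{ref}}$ on $\mathcal K$.

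To avoid differentiating the norm where $\mathbf e=0$, I would work with the integral form
\[
\|\mathbf e(s)\|_2\le\int_0^s\big(\varepsilon_i+L_i\|\mathbf e(r)\|_2\big)\,dr .
\]
Grönwall's inequality in integral form (equivalently, comparison with the solution of $z'=\varepsilon_i+L_i z$, $z(0)=0$) then gives
\[
\|\mathbf e(s)\|_2\le\frac{\varepsilon_i}{L_i}\big(e^{L_i s}-1\big).
\]
Evaluating at $s=t$ yields the first inequality of the lemma.

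The second inequality is elementary. By the mean value theorem, $e^{L_i t}-1=L_i t\,e^{L_i\xi}$ for some $\xi\in[0,t]$, so $e^{L_i t}-1\le L_i t\,e^{L_i t}$. Alternatively, $(e^{x}-1)/x\le e^{x}$ for $x>0$ follows from the series expansion.

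The only delicate point is justifying that the Lipschitz and mismatch bounds may be applied along the whole interval $[0,t]$. This is exactly the assumption that both subflows remain in $\mathcal K$, together with $L_i>0$ so that division by $L_i$ is legitimate.
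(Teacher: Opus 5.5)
Your proof is correct and is essentially the paper's Gr\"onwall argument. The only difference in the split is which field's Lipschitz bound you use: you add and subtract $F_i^{\mathrm{ref}}(\mathbf x(s))$ and use the mismatch along the learned trajectory, whereas the paper adds and subtracts $F_i^{\boldsymbol\theta}(\mathbf a_{\mathrm{ref}})$ and uses the mismatch along the reference trajectory; both work because Assumption~\ref{ass:standing_app}(2) gives both fields the constant $L_i$ on $\mathcal K$. Your integral form is also slightly more careful than the paper's differential inequality for $\|\mathbf a_\theta-\mathbf a_{\mathrm{ref}}\|_2$, since that norm need not be differentiable where the difference vanishes.
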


\begin{proof}
Let $\mathbf a_\theta(s)=\varphi_{i,s}^{\boldsymbol\theta}(\mathbf a)$ and $\mathbf a_{\mathrm{ref}}(s)=\varphi_{i,s}^{\mathrm{ref}}(\mathbf a)$.
Then
\[
\frac{d}{ds}\big(\mathbf a_\theta-\mathbf a_{\mathrm{ref}}\big)
=
F_i^{\boldsymbol\theta}(\mathbf a_\theta)-F_i^{\mathrm{ref}}(\mathbf a_{\mathrm{ref}})
=
\underbrace{F_i^{\boldsymbol\theta}(\mathbf a_\theta)-F_i^{\boldsymbol\theta}(\mathbf a_{\mathrm{ref}})}_{\text{Lipschitz}}
+
\underbrace{F_i^{\boldsymbol\theta}(\mathbf a_{\mathrm{ref}})-F_i^{\mathrm{ref}}(\mathbf a_{\mathrm{ref}})}_{\le \varepsilon_i}.
\]
Taking norms and using Lipschitz continuity on $\mathcal K$ yields
\[
\frac{d}{ds}\|\mathbf a_\theta-\mathbf a_{\mathrm{ref}}\|_2
\le
L_i\|\mathbf a_\theta-\mathbf a_{\mathrm{ref}}\|_2+\varepsilon_i.
\]
By Gr\"onwall \cite{hartman2002ordinary},
\(
\|\mathbf a_\theta(t)-\mathbf a_{\mathrm{ref}}(t)\|_2
\le
\int_0^t e^{L_i(t-s)}\varepsilon_i\,ds
=
\frac{e^{L_i t}-1}{L_i}\varepsilon_i
\),
and the second inequality follows from $\frac{e^{x}-1}{x}\le e^{x}$ for $x\ge 0$.
\end{proof}

\begin{lemma}[Substep defect]\label{lem:substep_defect}
Assume Assumption~\ref{ass:standing_app}. Fix a block $i$ and let $\tau\in\{\Delta t/2,\Delta t\}$.
Then for all $\mathbf a\in\mathcal K$,
\[
\big\|S_{i,\tau}^{\boldsymbol\theta}(\mathbf a)-S_{i,\tau}^{\mathrm{ref}}(\mathbf a)\big\|_2
\le
\tau e^{L_i\tau}\,\varepsilon_i
+
\big(C_i^{\boldsymbol\theta}+C_i^{\mathrm{ref}}\big)\tau^{3}.
\]
\end{lemma}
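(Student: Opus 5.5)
The plan is a triangle inequality through the exact subflows of the two block fields. For $\mathbf a\in\mathcal K$, $\tau\in\{\Delta t/2,\Delta t\}$, write
\[
S_{i,\tau}^{\boldsymbol\theta}(\mathbf a)-S_{i,\tau}^{\mathrm{ref}}(\mathbf a)
=\big(S_{i,\tau}^{\boldsymbol\theta}(\mathbf a)-\varphi_{i,\tau}^{\boldsymbol\theta}(\mathbf a)\big)
+\big(\varphi_{i,\tau}^{\boldsymbol\theta}(\mathbf a)-\varphi_{i,\tau}^{\mathrm{ref}}(\mathbf a)\big)
+\big(\varphi_{i,\tau}^{\mathrm{ref}}(\mathbf a)-S_{i,\tau}^{\mathrm{ref}}(\mathbf a)\big),
\]
and bound each of the three terms separately.

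The first and third terms are controlled directly by the within-block accuracy in Assumption~\ref{ass:standing_app}\,(3). They give $C_i^{\boldsymbol\theta}\tau^3$ and $C_i^{\mathrm{ref}}\tau^3$ respectively, which together produce the $(C_i^{\boldsymbol\theta}+C_i^{\mathrm{ref}})\tau^3$ term.

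The middle term is exactly the quantity estimated in Lemma~\ref{lem:subflow_perturb} with $t=\tau$. That lemma gives the bound $\tau e^{L_i\tau}\varepsilon_i$, using the uniform mismatch $\varepsilon_i$ from Assumption~\ref{ass:standing_app}\,(5) and the Lipschitz constant $L_i$ from Assumption~\ref{ass:standing_app}\,(2). Summing the three bounds yields the claimed inequality.

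The only point needing care is the hypothesis of Lemma~\ref{lem:subflow_perturb}: both exact subflows $\varphi_{i,s}^{\boldsymbol\theta}(\mathbf a)$ and $\varphi_{i,s}^{\mathrm{ref}}(\mathbf a)$ must stay in $\mathcal K$ for $s\in[0,\tau]$. I would obtain this from the containment condition, Assumption~\ref{ass:standing_app}\,(1), read as covering the exact subflows over one substep for $\Delta t\le\Delta t_0$.

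If that reading is felt to be too generous, there are two fallbacks. One is to enlarge $\mathcal K$ to a compact neighbourhood $\mathcal K'$; since the fields are Lipschitz, hence bounded on compacts, the subflows move at most $O(\tau)$ and remain in $\mathcal K'$ for small $\tau$. The price is that the constants, including $\varepsilon_i$, must then be taken on $\mathcal K'$. The other is to state the lemma, as the paper implicitly does, for those $\mathbf a$ whose substep trajectories remain in $\mathcal K$. Beyond this containment issue the argument is routine.
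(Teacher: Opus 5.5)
Your proof is correct and follows the paper's argument: insert and subtract the exact subflows $\varphi_{i,\tau}^{\boldsymbol\theta}(\mathbf a)$ and $\varphi_{i,\tau}^{\mathrm{ref}}(\mathbf a)$, bound the two outer terms by Assumption~\ref{ass:standing_app}\,(3), and bound the middle term by Lemma~\ref{lem:subflow_perturb} with $t=\tau$. Your containment caveat is also valid: Assumption~\ref{ass:standing_app}\,(1) covers only the discrete Strang states, so the paper's proof tacitly assumes the exact subflows stay in $\mathcal K$ over one substep, and either of your fallbacks makes this explicit.
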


\begin{proof}
Insert and subtract the exact subflows:
\[
\big\|S_{i,\tau}^{\boldsymbol\theta}(\mathbf a)-S_{i,\tau}^{\mathrm{ref}}(\mathbf a)\big\|_2
\le
\big\|S_{i,\tau}^{\boldsymbol\theta}(\mathbf a)-\varphi_{i,\tau}^{\boldsymbol\theta}(\mathbf a)\big\|_2
+
\big\|\varphi_{i,\tau}^{\boldsymbol\theta}(\mathbf a)-\varphi_{i,\tau}^{\mathrm{ref}}(\mathbf a)\big\|_2
+
\big\|\varphi_{i,\tau}^{\mathrm{ref}}(\mathbf a)-S_{i,\tau}^{\mathrm{ref}}(\mathbf a)\big\|_2.
\]
Apply Assumption~\ref{ass:standing_app}(3) to the first and third terms, and Lemma~\ref{lem:subflow_perturb} to the middle term.
\end{proof}

\begin{lemma}[One-step macro defect]\label{lem:macro_defect_app_revised}
Assume Assumption~\ref{ass:standing_app}. Let $S_{\Delta t}^{\boldsymbol\theta}$ and $S_{\Delta t}^{\mathrm{ref}}$
be the learned and reference symmetric Strang macro-steps built from the same schedule and substep durations
$\tau\in\{\Delta t/2,\Delta t\}$. Then there exist constants $C_{\mathrm{blk}},C_{\mathrm{num}}>0$,
independent of $\Delta t$, such that for all $\mathbf a\in\mathcal K$ and sufficiently small $\Delta t$,
\[
\big\|S_{\Delta t}^{\boldsymbol\theta}(\mathbf a)-S_{\Delta t}^{\mathrm{ref}}(\mathbf a)\big\|_2
\le
C_{\mathrm{blk}}\,\Delta t\sum_{i=1}^{N_{\mathrm{blk}}}\varepsilon_i
\;+\;
C_{\mathrm{num}}\,\Delta t^{3}.
\]
\end{lemma}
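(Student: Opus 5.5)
The argument is a telescoping comparison of the two compositions, substep by substep. Write the symmetric schedule \eqref{eq:method_rollout} as a composition of $m:=2N_{\mathrm{blk}}-1$ substep maps, $S_{\Delta t}^{\bullet}=T_m^{\bullet}\circ\cdots\circ T_1^{\bullet}$ for $\bullet\in\{\mathrm{ref},\boldsymbol\theta\}$. Each $T_j^{\bullet}=S_{i(j),\tau_j}^{\bullet}$ with $\tau_j\in\{\Delta t/2,\Delta t\}$, and the learned and reference schedules share the same indices $i(j)$ and durations $\tau_j$. Define the intermediate states $\mathbf b_0=\mathbf a$, $\mathbf b_j^{\bullet}=T_j^{\bullet}(\mathbf b_{j-1}^{\bullet})$. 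By Assumption~\ref{ass:standing_app}(1), for $\Delta t\le\Delta t_0$ all of these lie in $\mathcal K$.

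\textbf{Step 1: Lipschitz bound for each reference substep.} The telescoping step needs $\|S_{i,\tau}^{\mathrm{ref}}(\mathbf x)-S_{i,\tau}^{\mathrm{ref}}(\mathbf y)\|_2\le(1+\Lambda\tau)\|\mathbf x-\mathbf y\|_2$ on $\mathcal K$. The natural way to get this is to compare with the exact subflow. Gr\"onwall and Assumption~\ref{ass:standing_app}(2) give $\|\varphi_{i,\tau}^{\mathrm{ref}}(\mathbf x)-\varphi_{i,\tau}^{\mathrm{ref}}(\mathbf y)\|_2\le e^{L_i\tau}\|\mathbf x-\mathbf y\|_2$. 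Assumption~\ref{ass:standing_app}(3), however, only controls $S-\varphi$ pointwise. It does not control differences of $S-\varphi$ between two points.

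This is the main obstacle. My first attempt avoids needing a substep Lipschitz bound altogether. I would bound
\[
\|T_j^{\mathrm{ref}}(\mathbf x)-T_j^{\mathrm{ref}}(\mathbf y)\|_2\le e^{L_i\tau_j}\|\mathbf x-\mathbf y\|_2+2C_i^{\mathrm{ref}}\tau_j^3.
\]
The additive $O(\Delta t^3)$ slack is harmless: there are only finitely many ($m$) substeps per macro-step, so the extra terms just add to $C_{\mathrm{num}}\Delta t^3$. If that route turns out awkward, the fallback is to assume (as is standard for second-order one-step schemes built from Lipschitz fields) that each $S_{i,\tau}^{\mathrm{ref}}$ is $(1+c\tau)$-Lipschitz on $\mathcal K$.

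\textbf{Step 2: telescoping.} Write
\[
\mathbf b_m^{\boldsymbol\theta}-\mathbf b_m^{\mathrm{ref}}=\sum_{j=1}^m\Big[T_m^{\mathrm{ref}}\circ\cdots\circ T_{j+1}^{\mathrm{ref}}\big(T_j^{\boldsymbol\theta}(\mathbf b_{j-1}^{\boldsymbol\theta})\big)-T_m^{\mathrm{ref}}\circ\cdots\circ T_{j+1}^{\mathrm{ref}}\big(T_j^{\mathrm{ref}}(\mathbf b_{j-1}^{\boldsymbol\theta})\big)\Big],
\]
so that each bracket swaps exactly one learned substep for a reference one, starting from the same input $\mathbf b_{j-1}^{\boldsymbol\theta}\in\mathcal K$. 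Lemma~\ref{lem:substep_defect} bounds the local swap by $\tau_je^{L_{i(j)}\tau_j}\varepsilon_{i(j)}+(C^{\boldsymbol\theta}_{i(j)}+C^{\mathrm{ref}}_{i(j)})\tau_j^3$.

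Propagating this through the remaining reference substeps via Step 1 multiplies the local bound by at most $\prod_{k>j}e^{L_{i(k)}\tau_k}\le e^{2\max_iL_i\,\Delta t}$, which is at most $e^{2\max_iL_i\Delta t_0}$. It also adds at most $m\cdot 2\max_iC_i^{\mathrm{ref}}\Delta t^3$.

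\textbf{Step 3: collecting constants.} Each block index appears at most twice in the schedule, with $\tau_j\le\Delta t$. Summing therefore gives
\[
\|S_{\Delta t}^{\boldsymbol\theta}(\mathbf a)-S_{\Delta t}^{\mathrm{ref}}(\mathbf a)\|_2\le C_{\mathrm{blk}}\Delta t\sum_i\varepsilon_i+C_{\mathrm{num}}\Delta t^3,
\]
with the constants
\[
C_{\mathrm{blk}}=2e^{3\max_iL_i\Delta t_0},\qquad C_{\mathrm{num}}=2\sum_i(C_i^{\boldsymbol\theta}+C_i^{\mathrm{ref}})e^{2\max L\Delta t_0}+2m^2\max_iC_i^{\mathrm{ref}}.
\]
Both depend only on $\Delta t_0$, the Lipschitz constants and the within-block constants, and so are independent of $\Delta t$.
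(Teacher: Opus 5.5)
Your proof is correct and has the same skeleton as the paper's. You split the macro-step into its fixed $m=2N_{\mathrm{blk}}-1$ substeps, bound each local swap by Lemma~\ref{lem:substep_defect}, and carry each local defect to the end of the macro-step with a stability estimate. The genuine difference lies in that stability estimate.

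The paper propagates through the learned maps $T_k^{\boldsymbol\theta}$ and asserts that each substep map is Lipschitz on $\mathcal K$, ``implied by Assumption~\ref{ass:standing_app}(2)''. But (2) concerns only the vector fields, and (3) controls $S_{i,\tau}-\varphi_{i,\tau}$ only pointwise. So this holds for standard schemes (Heun, Crank--Nicolson, exact flows) but is not a consequence of the stated assumptions; it is essentially your fallback. Your quasi-Lipschitz bound $\|S_{i,\tau}^{\mathrm{ref}}(\mathbf x)-S_{i,\tau}^{\mathrm{ref}}(\mathbf y)\|_2\le e^{L_i\tau}\|\mathbf x-\mathbf y\|_2+2C_i^{\mathrm{ref}}\tau^3$ uses only (2), (3) and Gr\"onwall. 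Its additive slack costs only an extra $\mathcal O(m^2\Delta t^3)$, which goes into $C_{\mathrm{num}}$. So your primary route proves the lemma under the assumptions as actually stated.

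Your exact fan identity is also sounder than the paper's displayed inequality. There, the first sum contains $\|\mathbf w_m-\mathbf z_m\|_2$ itself as its $j=m$ term. The claimed ``absorption for small $\Delta t$'' therefore cannot close as written, since it would need the multiplying coefficient to be below $1$.

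Two small corrections:
\begin{itemize}
\item Each block runs for total time $\Delta t$ per macro-step, so the total substep time is $N_{\mathrm{blk}}\Delta t$. The propagation factor is therefore $\prod_{k>j}e^{L_{i(k)}\tau_k}\le e^{\Delta t\sum_i L_i}$, not $e^{2\max_i L_i\Delta t}$ (the latter fails once $N_{\mathrm{blk}}\ge 3$). The propagated $\tau^3$ slacks carry the same exponential factor. Your explicit $C_{\mathrm{blk}}$ and $C_{\mathrm{num}}$ change accordingly, but they remain independent of $\Delta t$.
\item The hybrid states $T_k^{\mathrm{ref}}\circ\cdots\circ T_{j+1}^{\mathrm{ref}}(\mathbf b_j^{\boldsymbol\theta})$, and the exact subflows you feed into Gr\"onwall in Step~1, must lie in $\mathcal K$. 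Assumption~\ref{ass:standing_app}(1) literally covers only the two rollouts. You need the same broad reading of containment that the paper's own proof relies on, since its propagation through learned maps creates analogous hybrid states.
\end{itemize}
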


\begin{proof}
Write the Strang schedule as a fixed finite composition of substep maps,
\[
S_{\Delta t}^{\mathrm{ref}}=T_m^{\mathrm{ref}}\circ\cdots\circ T_1^{\mathrm{ref}},
\qquad
S_{\Delta t}^{\boldsymbol\theta}=T_m^{\boldsymbol\theta}\circ\cdots\circ T_1^{\boldsymbol\theta},
\]
where each $T_j$ is some $S_{i,\tau}$ with $\tau\in\{\Delta t/2,\Delta t\}$.
Define intermediate states
$\mathbf z_0=\mathbf a$, $\mathbf z_j=T_j^{\mathrm{ref}}(\mathbf z_{j-1})$
and
$\mathbf w_0=\mathbf a$, $\mathbf w_j=T_j^{\boldsymbol\theta}(\mathbf w_{j-1})$.
By containment, $\mathbf z_j,\mathbf w_j\in\mathcal K$.

A telescoping bound gives
\[
\|\mathbf w_m-\mathbf z_m\|_2
\le
\sum_{j=1}^m
\Big\|
T_m^{\boldsymbol\theta}\circ\cdots\circ T_{j+1}^{\boldsymbol\theta}(\mathbf w_j)
-
T_m^{\boldsymbol\theta}\circ\cdots\circ T_{j+1}^{\boldsymbol\theta}(\mathbf z_j)
\Big\|_2
+
\sum_{j=1}^m
\|T_j^{\boldsymbol\theta}(\mathbf z_{j-1})-T_j^{\mathrm{ref}}(\mathbf z_{j-1})\|_2.
\]
Using Lipschitz continuity of each substep map on $\mathcal K$ (implied by Assumption~\ref{ass:standing_app}(2) for sufficiently small $\tau$)
bounds the first sum by a constant multiple of $\max_j\|\mathbf w_j-\mathbf z_j\|_2$ and is absorbed into the second sum for small $\Delta t$.
For the second sum, apply Lemma~\ref{lem:substep_defect} to each occurrence of block $i$:
each contributes $\mathcal O(\tau\,\varepsilon_i)+\mathcal O(\tau^3)$.
Since the schedule contains a fixed finite number of substeps and $\sum_j \tau = \mathcal O(\Delta t)$, we obtain
\[
\|S_{\Delta t}^{\boldsymbol\theta}(\mathbf a)-S_{\Delta t}^{\mathrm{ref}}(\mathbf a)\|_2
\le
C_{\mathrm{blk}}\,\Delta t\sum_i \varepsilon_i
+
C_{\mathrm{num}}\,\Delta t^3,
\]
with constants depending only on the Lipschitz bounds on $\mathcal K$ and the fixed schedule.
\end{proof}

\begin{lemma}[Discrete Gr\"onwall {\normalfont\cite{hairer1993solving}}]\label{lem:gronwall_app}
Let $\Delta t>0$, $N\in\mathbb{N}$, and set $T_N:=N\Delta t$.
Assume $L\ge 0$, $\alpha\ge 0$, and a nonnegative sequence $\{e_n\}_{n=0}^N$ satisfies
\[
e_{n+1}\le (1+L\Delta t)e_n + \alpha \Delta t,
\qquad n=0,\ldots,N-1,
\qquad e_0=0.
\]
Then
\[
e_N \le \alpha\,T_N\,\exp(LT_N)
\le \alpha\,T\,\exp(LT),
\qquad \text{for any }T\ge T_N.
\]
\end{lemma}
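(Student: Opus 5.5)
The statement immediately above is the discrete Gr\"onwall inequality (Lemma~\ref{lem:gronwall_app}). I would prove it by first unrolling the recursion into a closed-form bound and then estimating the resulting geometric sum. Set $q:=1+L\Delta t\ge 1$. Induction on $n$ gives
\[
e_n\le q^n e_0+\alpha\Delta t\sum_{k=0}^{n-1}q^k .
\]
The base case $n=0$ is trivial. For the inductive step, I multiply the hypothesis for $e_n$ by $q\ge 0$, which preserves the inequality, and then add $\alpha\Delta t$; this shifts the index range of the sum by one. Using $e_0=0$ and the crude bound $q^k\le q^{N}$ for $0\le k\le N-1$, which holds because $q\ge1$, I obtain
\[
e_N\le \alpha\,N\Delta t\,q^{N}=\alpha\,T_N\,(1+L\Delta t)^N .
\]

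Next I control the power by the exponential. The elementary inequality $1+x\le e^{x}$ for $x\ge 0$ follows from convexity of the exponential, or from the Taylor expansion with nonnegative terms. It gives $(1+L\Delta t)^N\le e^{LN\Delta t}=e^{LT_N}$, so that $e_N\le \alpha T_N\exp(LT_N)$.

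Finally, for any $T\ge T_N$, the map $s\mapsto \alpha s e^{Ls}$ is nondecreasing on $[0,\infty)$ because $\alpha,L\ge 0$. This yields the second inequality $\alpha T_N e^{LT_N}\le \alpha T e^{LT}$.

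There is no serious obstacle here. The only points needing care are the sign conditions: $q\ge1$ is needed for $q^k\le q^N$, and $q\ge0$ together with $e_n\ge0$ keeps the inequalities in the same direction when multiplying through. If a sharper constant were wanted, one could sum the geometric series exactly as $\alpha\Delta t\,(q^N-1)/(q-1)=\alpha(q^N-1)/L$ for $L>0$. This gives $e_N\le \alpha(e^{LT_N}-1)/L$, which implies the stated bound via $(e^x-1)/x\le e^x$, exactly as in the proof of Lemma~\ref{lem:subflow_perturb}. The crude bound above is enough for the statement, so I would present that version.
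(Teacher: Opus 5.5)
Your proof is correct and complete. You unroll the recursion, bound the geometric sum by $N(1+L\Delta t)^N$, apply $(1+L\Delta t)^N\le e^{LT_N}$, and use monotonicity of $s\mapsto \alpha s e^{Ls}$, which gives exactly both stated inequalities. The paper gives no proof of its own and only cites Hairer et al.; your unrolling argument is the standard proof from that source, so it is essentially the same approach.
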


\begin{proof}[Proof of Theorem~\ref{thm:main_structure_error}]
The structure claim is immediate: as shown in Lemma \ref{lem:strang_global_app}, the Strang macro-step is a fixed composition of the within-block maps
$S_{i,\tau}^{\boldsymbol{\theta}}$, so any per-substep monotonicity (resp.\ invariance) of
$E_i^{a,\boldsymbol{\theta}}$ (resp.\ $H_i^{a,\boldsymbol{\theta}}$) is inherited at the corresponding locations in the schedule.

For the error bound, decompose
\[
\|\mathbf{a}_{N_{\mathrm{steps}}}^{\boldsymbol{\theta}}-\mathbf{a}(T)\|_2
\le
\|\mathbf{a}_{N_{\mathrm{steps}}}^{\boldsymbol{\theta}}-\mathbf{a}_{N_{\mathrm{steps}}}^{\mathrm{ref}}\|_2
+
\|\mathbf{a}_{N_{\mathrm{steps}}}^{\mathrm{ref}}-\mathbf{a}(T)\|_2.
\]
The second term is controlled by Lemma~\ref{lem:strang_global_app}.
For the first term, set $\mathbf{e}_n:=\mathbf{a}_n^{\boldsymbol{\theta}}-\mathbf{a}_n^{\mathrm{ref}}$.
Using triangle inequality, for each step
\begin{align*}
\|\mathbf{e}_{n+1}\|_2
&=
\big\|S_{\Delta t}^{\boldsymbol{\theta}}(\mathbf{a}_n^{\boldsymbol{\theta}})-S_{\Delta t}^{\mathrm{ref}}(\mathbf{a}_n^{\mathrm{ref}})\big\|_2 \\
&\le
\underbrace{\big\|S_{\Delta t}^{\boldsymbol{\theta}}(\mathbf{a}_n^{\boldsymbol{\theta}})
      -S_{\Delta t}^{\mathrm{ref}}(\mathbf{a}_n^{\boldsymbol{\theta}})\big\|_2}_{\text{macro defect at the same input}}
+
\underbrace{\big\|S_{\Delta t}^{\mathrm{ref}}(\mathbf{a}_n^{\boldsymbol{\theta}})
      -S_{\Delta t}^{\mathrm{ref}}(\mathbf{a}_n^{\mathrm{ref}})\big\|_2}_{\text{stability of }S_{\Delta t}^{\mathrm{ref}}}.
\end{align*}
By Lemma~\ref{lem:macro_defect_app_revised},
the first term is bounded by
$C_{\mathrm{blk}}\Delta t\sum_{i=1}^{N_{\mathrm{blk}}}\varepsilon_i + C_{\mathrm{num}}\Delta t^{3}$.
By Assumption~\ref{ass:standing_app}\,(4), the second term is bounded by $(1+L\Delta t)\|\mathbf{e}_n\|_2$.
Hence,
\[
\|\mathbf{e}_{n+1}\|_2
\le
(1+L\Delta t)\|\mathbf{e}_n\|_2
+
C_{\mathrm{blk}}\Delta t\sum_{i=1}^{N_{\mathrm{blk}}}\varepsilon_i
+
C_{\mathrm{num}}\Delta t^{3}.
\]
Applying Lemma~\ref{lem:gronwall_app} gives
\[
\|\mathbf{e}_{N_{\mathrm{steps}}}\|_2
\le
C_T^{\mathrm{blk}}\,T\sum_{i=1}^{N_{\mathrm{blk}}}\varepsilon_i
+
C_T^{\mathrm{num}}\,T\Delta t^{2}.
\]
Here we set constants $C_T^{\mathrm{blk}}:=C_{\mathrm{blk}}e^{LT}$ (and similarly for $C_T^{\mathrm{num}}$).
Combining with Lemma~\ref{lem:strang_global_app} gives
\[
\|\mathbf{a}_{N_{\mathrm{steps}}}^{\boldsymbol{\theta}}-\mathbf{a}(T)\|_2
\le
C_T^{\mathrm{blk}}\,T\sum_{i=1}^{N_{\mathrm{blk}}}\varepsilon_i
+
C_T^{\mathrm{spl}}\Delta t^2.
\]
Finally, by Assumption~\ref{ass:standing_app}\,(6),
$\|u_{N_{\mathrm{steps}}}^{\boldsymbol{\theta}}-u(T)\|_{w,2}
=\|\Phi_b(\mathbf{a}_{N_{\mathrm{steps}}}^{\boldsymbol{\theta}}-\mathbf{a}(T))\|_{w,2}
\le C_\Phi\|\mathbf{a}_{N_{\mathrm{steps}}}^{\boldsymbol{\theta}}-\mathbf{a}(T)\|_2$,
which yields \eqref{eq:main_error_bound} after absorbing constants into $C_T$.
\end{proof}

\section{Block architectures and training details}
\label{sec:appendix_trainblocks}

This section records implementation details for block parameterizations and pretraining used in the numerical experiments.
Each block is pretrained by the unified operator-matching objective~\eqref{eq:method_loss} on coefficient samples
$\mathbf a\sim \mu_b$, where $\mu_b$ is a baseplate-dependent spectral-decay Gaussian prior on the retained coefficient interface.

\subsection{Coefficient prior $\mu_b$.}
In 1D, we sample independent coordinates
\begin{equation}\label{eq:app_gaussian_prior_1d}
a_k \sim \mathcal{N}(0,\sigma_k^2),
\qquad
\sigma_k=\frac{\mathrm{amp}}{(1+k)^\alpha},
\qquad k=1,\ldots,K .
\end{equation}
In 2D and 3D bases, we retain modes indexed by multi-indices $(j,\ell)$ (2D) or $(j,\ell,m)$ (3D).
For notational simplicity, we fix a bijection between the retained multi-indices and a scalar index
$k\in\{1,\dots,K\}$, and denote the corresponding coefficient by $a_k$.
We then set
\begin{equation}\label{eq:app_gaussian_prior_k}
\sigma_k=\frac{\mathrm{amp}}{\bigl(1+\|k\|_2\bigr)^{\alpha}}.
\end{equation}
where $k$ denotes the underlying multi-index.
Here, we take $\mathrm{amp}=1$, $\alpha=0.5$.
For periodic Fourier baseplates, we draw coefficients in the real-valued Hermitian-packed representation:
for each non self-conjugate mode, we sample
$\Re(a_{k}),\Im(a_{k})\sim\mathcal{N}\!\bigl(0,\sigma_k^2\bigr)$ independently,
while for self-conjugate modes only the real part is sampled; unpacking enforces Hermitian symmetry.
For cosine (Neumann) baseplates, coefficients are real, and we sample
$a_{k}\sim\mathcal{N}\!\bigl(0,\sigma_k^2\bigr)$ using \eqref{eq:app_gaussian_prior_k}.
Unless otherwise stated, we generate $20{,}000$ coefficient samples for training.

Targets in~\eqref{eq:method_loss} are generated by the corresponding exact Galerkin/spectral operators restricted to the retained modes.
Optimization uses AdamW with StepLR schedules, and all pretrained blocks are reused unchanged at inference time.

\subsection{1D Dirichlet baseplate (Shen--Legendre)}
\label{sec:app_train_1d}

We consider $\Omega=(-1,1)$ with $u(\pm 1)=0$ and represent fields in Shen's Legendre basis
\cite{shen1994efficient}
\[
\phi_k(x)=L_{k-1}(x)-L_{k+1}(x),\qquad k=1,\ldots,K,
\]
so that $\phi_k(\pm 1)=0$ and $u(x)=\sum_{k=1}^{K} a_k\,\phi_k(x)$.
Grid evaluations use a Gauss--Legendre quadrature grid $\{x_q\}_{q=1}^Q$ with basis matrix
$\Phi_b\in\mathbb{R}^{Q\times K}$, $(\Phi_b)_{qk}=\phi_k(x_q)$.
We use $Q=256$ and $K=96$.
The baseplate projection $\mathcal{P}_b$ is the discrete $L^2$ projection induced by the mass matrix
$M_{ij}=\langle \phi_i,\phi_j\rangle_{L^2}$.

\subsubsection*{Diffusion block ($u\mapsto u_{xx}$).}
We parameterize the energy generator $E_{u_{xx}}^{a,\boldsymbol{\theta}}:\mathbb{R}^K\to\mathbb{R}$ by an MLP
(4 hidden layers, width 128, GELU activations) and define the learned vector field by the coefficient-space gradient-flow form,
\begin{equation}
F_{u_{xx}}^{\boldsymbol{\theta}}(\mathbf a) \;=\; -\,G\,\nabla_a E_{u_{xx}}^{a,\boldsymbol{\theta}}(\mathbf a),
\end{equation}
where $G=M^{-1}$ is the fixed mobility induced by the discrete $L^2$ metric on the Shen space.
We train with AdamW (learning rate $10^{-3}$) and StepLR (step size 50, decay factor 0.3) for 200 epochs with batch size 128.

\subsubsection*{Transport block ($u\mapsto u\,u_x$).}
We learn a pointwise density $h_{\boldsymbol{\theta}}:\mathbb{R}\to\mathbb{R}$ (depth 4, width 128, GELU)
and induce a Hamiltonian generator via the density construction described in Section~\ref{sec:Methodology}.
The learned transport vector field takes the form
\begin{equation}
F_{uu_x}^{\boldsymbol{\theta}}(\mathbf a) \;=\; J\,\nabla_a H^{a,\boldsymbol{\theta}}_{uu_{x}}(\mathbf a),
\end{equation}
where $J=M^{-1}S$ is fixed and represents $\partial_x$ on the Shen space, with
$S_{ij}=\langle \partial_x\phi_i,\phi_j\rangle_{L^2}$.
We train $h_{\boldsymbol{\theta}}$ with AdamW (learning rate $10^{-4}$, weight decay $10^{-4}$) for 100 epochs with batch size 128.

Extended Data Figs.~\ref{fig:1d_uxx_op}-\ref{fig:1d_uux_op} provide block-level sanity checks on a held-out
coefficient state, comparing the learned operators against their
Galerkin-projected reference counterparts in physical space. The reported
discrepancies are on the order of $10^{-3}$ in the weighted $L^2$ norm
evaluated at Gauss--Legendre nodes.

\subsection{2D periodic baseplate (Fourier)}
\label{sec:app_train_2d_fourier}

We consider $\Omega=[0,2\pi)^2$ and represent real fields by a band-limited Fourier expansion
\[
u(x,y)=\sum_{|j|\le K_{\rm cut}}\sum_{|\ell|\le K_{\rm cut}}
a_{j,\ell}\,e^{i(jx+\ell y)},\qquad a_{-\!j,-\!\ell}=\overline{a_{j,\ell}} .
\]
Coefficients are stored in a real Hermitian-packed coordinate vector $\mathbf a\in\mathbb{R}^K$ that uniquely represents a real band-limited field.
Grid evaluations use an $N\times N$ uniform grid with FFT/iFFT transforms for $\Phi_b$ and $\mathcal{P}_b$.
Unless stated otherwise, $N=64$ and we retain modes up to $K_{\rm cut}=21$, so the retained complex mode set is
$(2K_{\rm cut}+1)^2$ and the packed real dimension is denoted by $K$.

\subsubsection*{Laplacian diffusion block ($u\mapsto \Delta u$).}
Since the Laplacian is mode-decoupled in the Fourier baseplate, we restrict the energy generator to a structured quadratic form,
\begin{equation}
E_{\Delta}^{a,\boldsymbol{\theta}}(\mathbf a)
\;=\;
\frac12\, \mathbf a^\top \!\operatorname{diag}(\mathbf c^{\boldsymbol{\theta}})\, \mathbf a,
\qquad \mathbf c^{\boldsymbol{\theta}}\in\mathbb{R}^K.
\end{equation}
We then define the diffusion vector field by the gradient-flow template
\begin{equation}\label{eq:fourier_delta}
F_{\Delta}^{\boldsymbol{\theta}}(\mathbf a)
\;=\;
-\,G\,\nabla_a E_{\Delta}^{a,\boldsymbol{\theta}}(\mathbf a),
\end{equation}
with $G=I$ for the orthonormal Fourier coefficient metric.
Training uses AdamW (learning rate $10^{-3}$) with StepLR (step size 40, decay factor 0.3) for 80 epochs (batch size 128).

\subsubsection*{Hamiltonian transport blocks ($u\mapsto uu_x$ and $u\mapsto uu_y$).}
We train two pointwise density networks $\rho_{\boldsymbol{\theta}_x}$ and $\rho_{\boldsymbol{\theta}_y}$
with identical architecture (depth 4, width 128, GELU), and assemble directional transport via fixed derivative operators
$J_x$ and $J_y$ that represent $\partial_x$ and $\partial_y$ on the retained modes:
\begin{equation}
F_{uu_x}^{\boldsymbol{\theta}}(\mathbf a)=J_x\,\nabla_a H^{a,\boldsymbol{\theta}_x}_{uu_x}(\mathbf a),
\qquad
F_{uu_y}^{\boldsymbol{\theta}}(\mathbf a)=J_y\,\nabla_a H^{a,\boldsymbol{\theta}_y}_{uu_y}(\mathbf a).
\end{equation}
Training uses AdamW (learning rate $5\times 10^{-4}$, weight decay $10^{-6}$) for 500 epochs (batch size 16),
with StepLR (step size 150, decay factor 0.5).

\subsubsection*{Poisson inversion block ($\Delta\psi=\omega$).}
Given $\omega$, we predict $\psi$ on the same retained Fourier modes.
We parameterize a diagonal quadratic generator with softplus-constrained weights and fit it using exact Fourier inversion targets.
Training uses AdamW for 200 epochs (batch size 128) with StepLR (step size 80, decay factor 0.3).

\subsection{2D Neumann baseplate (cosine/DCT)}
\label{sec:app_train_2d_neumann}

We consider $\Omega=[0,1]^2$ with homogeneous Neumann boundary conditions and represent fields in a tensor-product cosine basis
\[
u(x,y)=\sum_{j=0}^{K_{\rm cut}}\sum_{\ell=0}^{K_{\rm cut}} a_{j,\ell}\,
\cos(\pi j x)\cos(\pi \ell y).
\]
Grid evaluations use an endpoint $N\times N$ grid with IDCT-I/DCT-I transforms for $\Phi_b$ and $\mathcal{P}_b$.
Unless stated otherwise, $N=65$ and $K_{\rm cut}=\min\!\bigl(K_{\max},\lfloor (N-1)/3\rfloor\bigr)=21$, so $K=(K_{\rm cut}+1)^2$.
Let $\mathbf a\in\mathbb{R}^K$ be the vector
obtained by stacking the coefficients $\{a_{k,\ell}\}$ in a fixed order.

%\paragraph{Neumann Laplacian diffusion block ($u\mapsto \Delta u$).}
%We learn a Neumann Laplacian diffusion block on retained cosine modes by parameterizing
%$E_{\Delta}^{\boldsymbol{\theta}}:\mathbb{R}^{K}\to\mathbb{R}$ with an MLP (4 hidden layers, width 256, GELU) and setting
%\begin{equation}
%F_{\Delta}^{\boldsymbol{\theta}}(a)\;=\;-\nabla_a E_{\Delta}^{\boldsymbol{\theta}}(a).
%\end{equation}
%Training uses AdamW (learning rate $10^{-3}$) with StepLR (step size 40, decay factor 0.3) for 80 epochs (batch size 128).

\subsubsection*{Neumann Laplacian diffusion block ($u\mapsto \Delta u$).}
This block follows the same Laplacian design and training recipe as the Fourier-baseplate diffusion block \eqref{eq:fourier_delta}, with the only change being the baseplate.

\subsection{3D periodic baseplate (Fourier/FFT)}
\label{sec:app_train_3d_fourier}

We consider $\Omega=[0,2\pi)^3$ and represent real fields by a band-limited 3D Fourier expansion
\[
u(x,y,z)=\sum_{|j|\le K_{\rm cut}}\sum_{|\ell|\le K_{\rm cut}}\sum_{|m|\le K_{\rm cut}}
a_{j,\ell,m}\,e^{i(jx+\ell y+m z)},\qquad
a_{-\!j,-\!\ell,-\!m}=\overline{a_{j,\ell,m}} .
\]
Coefficients are stored in a real Hermitian-packed vector $\mathbf a\in\mathbb{R}^K$.
Grid evaluations use an $N\times N\times N$ uniform grid with 3D FFT/iFFT transforms for $\Phi_b$ and $\mathcal{P}_b$.
We retain modes up to $K_{\rm cut}$, following the same truncation convention as in the 2D periodic baseplate.

\subsubsection*{3D Laplacian diffusion block ($u\mapsto \Delta u$).}
The 3D Laplacian block is trained in exactly the same way as the 2D periodic Fourier Laplacian block \eqref{eq:fourier_delta}, except that the baseplate is the 3D Fourier expansion and the retained index set is three-dimensional.

%\paragraph{3D Laplacian diffusion block ($u\mapsto \Delta u$).}
%We pretrain a 3D Laplacian diffusion block by learning an energy generator $E_{\Delta}^{\boldsymbol{\theta}}:\mathbb{R}^K\to\mathbb{R}$
%and defining
%\begin{equation}
%F_{\Delta}^{\boldsymbol{\theta}}(a)\;=\;-\nabla_a E_{\Delta}^{\boldsymbol{\theta}}(a).
%\end{equation}
%Targets are generated by the exact 3D spectral Laplacian restricted to the retained band-limited modes.
%The architecture, sampling protocol, and optimizer settings follow the 2D periodic configuration in
%Section~\ref{sec:app_train_2d_fourier}.

\section{Additional numerical experiments}
\label{sec:app_additional_experiments}

This section reports supporting experiments that follow the same baseplate interface, pretrained-block reuse, and Strang block-composition protocol as in the main text, and collectively reinforce the advantages of LegONet in accuracy, stability, and plug-and-play reuse across PDE settings.
Each experiment is evaluated by the weighted relative error $\mathrm{rel}$ and the normalized pointwise profile $e(x)$ defined in \eqref{eq:err_metrics}.
Unless otherwise stated, linear or stiff coefficient-space substeps use exact or Crank--Nicolson-type updates, whereas nonlinear substeps, including reaction, forcing, and transport terms, use second-order explicit updates such as Heun schemes, or exact pointwise maps when available. Thus the within-block updates are consistent with the second-order assumption used in the analysis.

% -------------------------
\subsection{1D Dirichlet domains}
\label{sec:app_1d_dirichlet_exps}

\subsubsection*{Experiment A1: 1D Ginzburg--Landau.}
We consider
\begin{equation}
  u_t = u_{xx} - (u+u^3),
  \qquad x\in(-1,1),\qquad u(\pm1,t)=0,
\end{equation}
advanced by a diffusion--reaction Strang composition
$S^{\boldsymbol{\theta}}_{u_{xx},\Delta t/2}\circ S_{-(u+u^3),\Delta t}\circ S^{\boldsymbol{\theta}}_{u_{xx},\Delta t/2}$.
The diffusion substep $u_{xx}$ uses a Crank--Nicolson update in the Shen--Legendre coefficient space, while the reaction substep applies a second-order Heun update to the projected local term $-(u+u^3)$ on quadrature nodes.
We take $\Delta t=10^{-4}$ and $N_{\mathrm{steps}}=10^3$ ($T=0.1$).
Fig. \ref{fig:bench_gl_1d} shows snapshots and the corresponding $e(x)$.
Extended Data Fig.~\ref{fig:1d_gl_energy} reports the energy diagnostics.

\subsubsection*{Experiment A2: 1D heat equation with time-dependent Dirichlet data.}
We consider
\begin{equation}
  u_t=\nu u_{xx},\qquad x\in(-1,1),\qquad u(-1,t)=A(t),\ \ u(1,t)=B(t),
\end{equation}
with $\nu=2\times 10^{-2}$ and impose the non-homogeneous boundary data by lifting,
$u=u_0+u_{\mathrm{lift}}$, where
$u_{\mathrm{lift}}(x,t)=\frac{1-x}{2}A(t)+\frac{1+x}{2}B(t)$ so that $u_0(\pm1,t)=0$.
The interior component satisfies
\[
(u_0)_t=\nu (u_0)_{xx}+g(x,t),
\qquad
g(x,t)=-(u_{\mathrm{lift}})_t,
\]
and is advanced by a diffusion--forcing Strang composition
$S_{g(x,t),\Delta t/2}\circ S^{\boldsymbol{\theta}}_{\nu (u_0)_{xx},\Delta t}\circ S_{g(x,t),\Delta t/2}$,
with Crank--Nicolson diffusion and Heun forcing.
We take $A(t)=A_0+\alpha_A\sin(2\pi\omega t)$, $B(t)=B_0+\alpha_B\cos(2\pi\omega t)$ with $(A_0,B_0)=(0.2,-0.2)$, $\alpha_A=\alpha_B=5.6$, and $\omega=1$.
We use $\Delta t=10^{-3}$ and $N_{\mathrm{steps}}=2\times10^4$ ($T=20$).
Fig.~\ref{fig:bench_heat_1d} reports snapshots and $e(x)$.
Energy diagnostics are collected in Extended Data Fig.~\ref{fig:1d_heat_u0_energy}.

% -------------------------
\subsection{2D periodic domains}
\label{sec:app_2d_periodic_exps}

\subsubsection*{Experiment A3: 2D Allen--Cahn.}
We consider
\begin{equation}
  u_t=\varepsilon\,\Delta u + u-u^3,\qquad (x,y)\in\Omega=[0,2\pi)^2,
\end{equation}
with $\varepsilon=10^{-2}$, advanced by a diffusion--reaction Strang composition.
The diffusion substep applies a coefficient-space update for $\varepsilon\Delta$ on the retained Fourier modes, and the reaction substep applies a Heun update to $u-u^3$ pointwise on the grid followed by projection and de-aliasing.
We take $\Delta t=10^{-3}$ and $N_{\mathrm{steps}}=10^5$ ($T=100$).
Fig.~\ref{fig:bench_ac2d} and Extended Data Figs. \ref{fig:ac2d_relerr}, \ref{fig:ac2d_energy} report snapshots, $\mathrm{rel}$, and energy decay.

\subsubsection*{Experiment A4: 2D vector Burgers.}
We consider
\begin{equation}
  \begin{aligned}
    u_t + u\,u_x + v\,u_y &= \nu\,\Delta u,\\
    v_t + u\,v_x + v\,v_y &= \nu\,\Delta v,
  \end{aligned}
  \qquad (x,y)\in\Omega=[0,2\pi)^2,
\end{equation}
with $\nu=10^{-3}$, advanced by a symmetric Strang composition on the retained Fourier modes as shown in Fig. \ref{fig:strang}.
Diffusion uses an implicit coefficient-space update, while transport is evaluated pseudo-spectrally: the self-advection terms are provided by the pretrained density blocks for $uu_x$ and $uu_y$, and the cross terms are computed on the grid with Fourier differentiation before projection.
We take $\Delta t=2\times10^{-3}$ and $N_{\mathrm{steps}}=1.5\times10^4$ ($T=30$).
Figs.~\ref{fig:bench_burgers2d_u}, \ref{fig:bench_burgers2d_v} and Extended Data Figs. \ref{fig:burgers2d_u_relerr}, \ref{fig:burgers2d_v_relerr} report snapshots and $\mathrm{rel}$ for both components.

% -------------------------
\subsection{2D Neumann domains}
\label{sec:app_2d_neumann_exps}

\subsubsection*{Experiment A5: 2D Swift--Hohenberg.}
We consider
\begin{equation}
\begin{cases}
  u_t = -(\Delta+k_0^2)^2 u + \mu u-u^3, & (x,y)\in\Omega=[0,1]^2,\\
  \partial_n u=0,\quad \partial_n(\Delta u)=0, & (x,y)\in\partial\Omega,
\end{cases}
\end{equation}
with $\mu=0.5$ and $k_0=6$, advanced by a linear--reaction Strang composition.
The stiff linear substep advances $u_t=-(\Delta+k_0^2)^2u$ in cosine coefficient space by mode-wise diagonal updates, and the reaction substep applies a pointwise update to $\mu u-u^3$ on the physical grid followed by projection to the retained cosine modes.
We take $\Delta t=10^{-2}$ and $N_{\mathrm{steps}}=2\times10^3$ ($T=20$).
Fig.~\ref{fig:bench_sh2d} and Extended Data Fig. \ref{fig:sh2d_relerr} report snapshots and $\mathrm{rel}$.

\subsubsection*{Experiment A6: 2D Cahn--Hilliard with non-homogeneous Neumann flux.}
We consider
\begin{equation}
  u_t=\Delta\mu,\qquad \mu=-\varepsilon^2\Delta u+(u^3-u),\qquad (x,y)\in\Omega=[0,1]^2,
\end{equation}
with boundary flux
$\partial_y u(x,0,t)=g(x)$ and $\partial_y u(x,1,t)=-g(x)$, where $g(x)=g_{\mathrm{amp}}\cos(\pi x)$, and homogeneous Neumann conditions in $x$.
We set $\varepsilon=5\times10^{-2}$ and $g_{\mathrm{amp}}=5\times10^{-2}$.
We impose the flux by a harmonic lifting $u=u_0+u_{\mathrm{lift}}$ with $\Delta u_{\mathrm{lift}}=0$, so that $u_0$ satisfies homogeneous Neumann conditions and is represented in the cosine basis.
Time stepping advances $u_0$ by a Strang splitting between the stiff linear operator $-\varepsilon^2\Delta^2 u_0$ and the remaining nonlinear term evaluated on the physical grid.
We take $\Delta t=5\times10^{-4}$ and $N_{\mathrm{steps}}=2\times10^4$ ($T=10$).
Fig.~\ref{fig:bench_ch2d} and Extended Data Fig. \ref{fig:ch2d_relerr} report snapshots and $\mathrm{rel}$.

% -------------------------

\subsection{3D periodic domains}
\label{sec:app_3d_periodic_exps}

\subsubsection*{Experiment A7: 3D Allen--Cahn with volume constraint.}

We consider
\begin{equation}
  u_t=\varepsilon\Delta u + u-u^3-\lambda(t),
  \qquad \lambda(t)=\langle u-u^3\rangle,
  \qquad (x,y,z)\in\Omega=[0,2\pi)^3,
\end{equation}
with $\varepsilon=10^{-2}$, advanced by the same diffusion--reaction Strang template as in 2D, with the spatial-average correction applied in the reaction step.
Here $\langle f\rangle:=|\Omega|^{-1}\int_\Omega f(x)\,dx$ denotes the spatial average.
We take $\Delta t=5\times10^{-3}$ and $N_{\mathrm{steps}}=800$ ($T=4$).
This experiment deliberately departs from the spectral-decay Gaussian prior \eqref{eq:app_gaussian_prior_k} used in block pretraining and instead employs a two-phase voxel initialization with sharp interfaces as shown in Fig.~\ref{fig:bench_ac3d}.
Such non-smooth morphology induces a substantially different coefficient distribution, activating higher-frequency modes absent from the training prior.
The test therefore probes robustness under coefficient-distribution shift.
Fig.~\ref{fig:bench_ac3d} reports phase renderings and $\mathrm{rel}$.

\subsubsection*{OOD initial-condition priors for 3D Swift--Hohenberg}\label{sec:app_sh3d_ood}

We evaluate robustness under two out-of-distribution initial-condition priors while keeping the same PDE parameters, baseplate, and rollout solver.
The in-distribution (ID) prior follows the spectral Gaussian construction used throughout the paper, with modewise standard deviation
$\sigma_k=\mathrm{amp}/(1+\|k\|_2)^{\alpha}$.
OOD1 uses the same Gaussian family but removes spectral decay by setting $\alpha=0$, so that all retained Fourier modes have identical variance and the resulting fields carry increased high-frequency energy.
OOD2 uses a blocky two-phase prior that introduces sharp interfaces in physical space:
we first sample a piecewise-constant random field on a coarse grid of size $N_c^3$ with $N_c=8$,
\[
u_0^{(c)}(\xi)=-1+2B(\xi),
\qquad
B(\xi)\sim\mathrm{Bernoulli}(p),
\qquad
\xi\in\{1,\ldots,N_c\}^3,
\]
so that $\mathbb{P}[u_0^{(c)}(\xi)=+1]=p$ and $\mathbb{P}[u_0^{(c)}(\xi)=-1]=1-p$ independently over $\xi$.
We set $p=0.35$, upsample $u_0^{(c)}$ to the $N^3$ simulation grid, and apply the same amplitude normalization as in the ID setting.
Together, OOD1 and OOD2 probe robustness to increased high-frequency content in coefficient space and to non-smooth initial interfaces in physical space, respectively.

\section{Baseline configuration}\label{sec:app_baselines_1to3}

This section summarizes the baseline setups for the main-text solver-level comparisons.
We compare LegONet with two widely used supervised neural-operator baselines:
Fourier Neural Operator (FNO)~\cite{li2020fourier} and DeepONet~\cite{lu2019deeponet},
representing canonical spectral-convolution and branch--trunk architectures.
All methods are evaluated in closed loop under an identical rollout protocol, starting from the same initial condition.

For the 1D Burgers experiment, we additionally include a standard physics-informed neural network (PINN) baseline, which learns a continuous surrogate $u^{\boldsymbol{\theta}}(x,t)$ from PDE residual and boundary/initial constraints without trajectory supervision.
We do not include PINNs in 2D/3D because a like-for-like solver-level comparison would require long-horizon optimization over high-dimensional space–time fields under stiff/higher-order operators and coupled constraints.
This makes training highly sensitive to collocation design and loss balancing, preventing a controlled comparison in our setting.

\subsection{Baseline models}\label{sec:app_baseline_models}

\subsubsection*{FNO.}
We use an FNO time-stepper in residual-update form,
\begin{equation}\label{eq:fno_stepper}
\mathbf u_{n+1} \;=\; \mathbf u_n \;+\; \alpha\,\delta^{\boldsymbol\theta}(\mathbf u_n;\Delta t),
\end{equation}
where $\delta^{\boldsymbol\theta}$ is an FNO backbone that maps the current discrete field to an increment on the same set of nodes, and $\alpha>0$ is a residual scale.
Residual parameterizations are commonly used to stabilize long-horizon rollouts of neural time-steppers.

\subsubsection*{DeepONet.}
We use the standard DeepONet branch--trunk factorization to represent the one-step operator as a low-rank bilinear form with a residual update,
\[
\mathbf u_{n+1}(\mathbf{x}) \;=\; \mathbf u_n(\mathbf{x}) \;+\; \alpha \,\big\langle b^{\boldsymbol\theta}(\mathbf{s}_n),\, t^{\boldsymbol\theta}(\mathbf{x})\big\rangle,
\]
where $\alpha>0$ is a residual scale and $\langle\cdot,\cdot\rangle$ denotes the Euclidean inner product in $\mathbb{R}^r$.
The branch network $b^{\boldsymbol\theta}:\mathbb{R}^{N_s}\!\to\mathbb{R}^r$ takes as input sensor measurements
$\mathbf{s}_n=\big(u_n(\mathbf{x}_1),\ldots,u_n(\mathbf{x}_{N_s})\big)$ and outputs coefficients in a rank-$r$ latent space.
The trunk network $t^{\boldsymbol\theta}:\Omega\!\to\mathbb{R}^r$ maps a query location $\mathbf{x}\in\Omega$ to a location-dependent basis vector.
The increment is evaluated as
$\langle b^{\boldsymbol\theta}(\mathbf{s}_n), t^{\boldsymbol\theta}(\mathbf{x})\rangle
=\sum_{j=1}^{r} b^{\boldsymbol\theta}_j(\mathbf{s}_n)\, t^{\boldsymbol\theta}_j(\mathbf{x})$,
and is computed at the experiment-specific evaluation nodes.

For each experiment, baselines are trained to advance the experiment-specific discretization and are evaluated on the same macro-time snapshots and evaluation nodes.
We use standard instantiations of FNO and DeepONet and tune widths (and, for DeepONet, the rank) so that parameter counts are comparable to those of the corresponding LegONet blocks. Exact configurations are reported below.
A key failure mode of teacher-forced one-step training is train--test mismatch in closed-loop rollouts.
To mitigate this effect, we train FNO and DeepONet with a rollout-aware objective over a short unrolled window.
In contrast to LegONet block pretraining, which matches instantaneous operator labels via \eqref{eq:method_loss},
the supervised baselines minimize a rollout-aware $K_{\mathrm{roll}}$-step loss over short trajectory windows:
\begin{equation}\label{eq:baseline_rollout_loss}
\min_{\boldsymbol{\theta}}\;
\mathbb{E}_{(\mathbf u_n,\mathbf u_{n+1},\ldots,\mathbf u_{n+K_{\mathrm{roll}}})}\!\left[
\frac{1}{K_{\mathrm{roll}}}\sum_{k=1}^{K_{\mathrm{roll}}}
\big\|\mathbf u_{n+k} - \widehat{\mathbf u}_{n+k}^{\boldsymbol{\theta}}\big\|_{w,2}^2
\right],
\end{equation}
where $\widehat{\mathbf u}_{n}^{\boldsymbol{\theta}}=\mathbf u_n$ and
$\widehat{\mathbf u}_{n+k}^{\boldsymbol{\theta}}=\mathcal{S}_{\boldsymbol{\theta}}\!\left(\widehat{\mathbf u}_{n+k-1}^{\boldsymbol{\theta}}\right)$
for $k\ge 1$.
Here $\mathcal{S}_{\boldsymbol{\theta}}$ denotes the learned one-step snapshot map (FNO or DeepONet).
For each experiment, we choose $\Delta t_{\mathrm{eff}}$, $K_{\mathrm{roll}}$, and $N_{\mathrm{traj}}$ defined below so that the resulting number of rollout windows is matched, up to a small tolerance, to the total block-training samples used by the corresponding LegONet assembly.

\subsection*{Experiment 1 (1D Burgers)}

For baseline feasibility, we train supervised operator learners on a coarser effective time step rather than the fine micro step used by the reference solver.
When the snapshot spacing is too small, the one-step map becomes near-identity and teacher-forced training is dominated by trivial identity fitting, which exacerbates train--test mismatch in closed-loop rollouts.
We therefore use a shortened horizon $T=0.2$ and an effective step $\Delta t_{\mathrm{eff}}=10^{-3}$, generate $N_{\mathrm{traj}}=220$ reference rollouts, and store trajectories on Gauss--Legendre quadrature nodes with $Q=256$ points as a tensor of shape $(N_{\mathrm{traj}},T_m,Q)$, where $T_m=T/\Delta t_{\mathrm{eff}}+1=201$.
All reported errors are computed on these Gauss--Legendre nodes using the weighted norm $\|\cdot\|_{w,2}$ in \eqref{eq:err_metrics}.
Note that each LegONet block is pretrained by operator matching using $20{,}000$ independent coefficient samples drawn from the Gaussian prior, yielding $40{,}000$ samples total for the two blocks.
The resulting rollout-window count matches the LegONet training-sample budget for this experiment.
%For supervised baselines, we train on rollout windows extracted from the stored trajectories, which yields a number of training windows on the same numerical scale as the total block-training samples used by the corresponding LegONet assembly.
%For supervised baselines, we use $N_{\mathrm{traj}}(T_m-1)=200\times 200=40{,}000$ one-step transitions, matching the number of training pairs to the LegONet coefficient-sample budget.

While evaluation is always performed on the Gauss--Legendre nodes, the training interface depends on the baseline architecture.
DeepONet is trained directly on the quadrature representation.
FNO requires an equispaced grid to enable FFT-based spectral convolutions, so we resample each snapshot from the Gauss--Legendre nodes to an equispaced grid of the same resolution and use a fixed odd extension across the boundaries before the FFT layers, which is more consistent with the homogeneous Dirichlet boundary values.
%FNO predictions are then interpolated back to the Gauss--Legendre nodes for loss evaluation and reporting.
PINN is trained in continuous space--time using interior and boundary collocation points, and is evaluated by querying $u^\theta(x,t_n)$ at the Gauss--Legendre nodes.

To reduce capacity confounders, we tune the baseline widths and ranks so that their trainable parameter counts are comparable to those of the full LegONet assembly used in this experiment.
%To reduce capacity confounders, we match parameter counts between the baselines and the two LegONet blocks used in this experiment.
Our FNO uses retained Fourier modes $m=16$, channel width $40$, depth $2$, and residual scaling $\alpha=0.1$.
DeepONet follows the branch--trunk factorization: the branch ingests $N_s=64$ sensor values of the current snapshot (selected as a fixed subset of the $Q=256$ Gauss--Legendre nodes) and outputs a latent vector in $\mathbb{R}^{r}$ with $r=145$, while the trunk maps query locations $x$ to $\mathbb{R}^{r}$. We use $3$-layer MLPs (width $128$, GELU) for both branch and trunk, and apply the same residual scaling $\alpha=0.1$ in the one-step map.
We train FNO and DeepONet with the rollout-aware objective \eqref{eq:baseline_rollout_loss} using $K_{\mathrm{roll}}=25$ and AdamW (learning rate $5\times 10^{-4}$, weight decay $10^{-6}$), batch size $16$, for $2000$ epochs.

The PINN represents the solution as a continuous function on $x\in[-1,1]$ and $t\in[0,T]$ and is trained without trajectory supervision by minimizing a weighted sum of a PDE-residual loss and initial/boundary penalties,
\begin{equation}\label{eq:pinn_loss}
\min_{\boldsymbol{\theta}}\;
w_{\mathrm{phys}}\,\mathcal{L}_{\mathrm{phys}}
\;+\;
w_{\mathrm{ic}}\,\mathcal{L}_{\mathrm{ic}}
\;+\;
w_{\mathrm{bc}}\,\mathcal{L}_{\mathrm{bc}},
\qquad
(w_{\mathrm{phys}},w_{\mathrm{ic}},w_{\mathrm{bc}})=(1,10,10),
\end{equation}
where $\mathcal{L}_{\mathrm{phys}}$ is the mean-squared PDE residual evaluated at interior collocation points and
$\mathcal{L}_{\mathrm{ic}},\mathcal{L}_{\mathrm{bc}}$ are mean-squared penalties enforcing the initial and boundary conditions.
We use an MLP with width $149$, depth $6$, and $\tanh$ activations, optimized with Adam (learning rate $2\times10^{-4}$) for $8000$ epochs.
At each epoch, we resample collocation points uniformly in space--time and use $N_{\mathrm{int}}=4096$ interior points and $N_{\mathrm{bd}}=1024$ initial/boundary points.

\subsection*{Experiment 2 (2D Navier--Stokes)}

We generate $N_{\mathrm{traj}}=800$ reference rollouts on a $64\times 64$ grid up to $T=50$ and store effective snapshots with step size $\Delta t_{\mathrm{eff}}=1.0$.
This choice matches the LegONet sample budget up to a small tolerance.
We train an FNO time-stepper in residual-update form with scaling $\alpha=1$,
using a 2D spectral-convolution backbone with retained modes $m=12$, channel width $64$, and depth $4$.
Training uses the rollout-aware objective \eqref{eq:baseline_rollout_loss} with unroll length $K_{\mathrm{roll}}=10$ and AdamW
(learning rate $10^{-3}$, weight decay $10^{-4}$), batch size $4$, for $2000$ epochs.
DeepONet follows the standard branch--trunk construction on the same effective-time data, also in residual-update form with $\alpha=1$.
The branch network ingests $N_s=1024$ fixed spatial sensors (a uniform subsampling of the $64\times 64$ grid) and outputs a rank-$r$ latent vector with $r=128$,
while the trunk maps 2D query locations $\mathbf{x}=(x,y)$ to $\mathbb{R}^{r}$.
Both branch and trunk are implemented as MLPs (width $256$, depth $3$, GELU), and training follows the same rollout-aware protocol and model selection criteria as for FNO.
In this experiment, LegONet reuses a pretrained Laplacian diffusion block and a Poisson inversion block on the periodic Fourier baseplate with structured diagonal parametrization.
Supervised baselines, by contrast, must approximate the full nonlinear time-advance operator at step size $\Delta t_{\mathrm{eff}}$ and therefore require substantially higher-capacity networks for stable long-horizon rollouts.

%For data budgets, LegONet pretrains each block by operator matching using $20{,}000$ independent coefficient samples.
%The supervised baselines are trained from strided rollout transitions, yielding $N_{\mathrm{traj}}(T_{m}-1)=400\times 50=20{,}000$ effective one-step pairs.
%We thus match the number of training samples across paradigms, while noting that the supervision differs: baselines receive fieldwise one-step labels, whereas LegONet matches instantaneous mechanism operators on the retained coefficient interface.

\subsection*{Experiment 3 (3D Swift--Hohenberg)}

We construct a supervised rollout dataset using the Strang-splitting reference solver on a $64^3$ periodic grid up to $T=30$.
We record effective snapshots every $\Delta t_{\mathrm{eff}}=0.15$.
The dataset contains $N_{\mathrm{traj}}=100$ trajectories, yielding $T_m=T/\Delta t_{\mathrm{eff}}+1=201$ stored frames per trajectory.

We train a 3D FNO time-stepper using the residual update with fixed scale $\alpha=0.6$.
The model uses a spectral-convolution backbone with retained modes $m=16$, channel width $64$, depth $6$, and LayerNorm.
Training uses the rollout-aware objective \eqref{eq:baseline_rollout_loss} with unroll length $K_{\mathrm{roll}}=8$ and AdamW
(learning rate $6\times 10^{-4}$, weight decay $10^{-6}$) for $2000$ epochs.
We do not include a DeepONet baseline in 3D because the standard branch--trunk evaluation requires producing an $r$-dimensional trunk feature at every spatial query location to form a full-field update.
On a $64^3$ grid, this introduces a prohibitive compute and memory footprint for rollout-aware training under the same closed-loop field supervision used for FNO and LegONet.
Subsampling query points would change the evaluation interface and would no longer constitute a like-for-like solver-level comparison.

%LegONet reuses a pretrained Laplacian block learned by operator matching on $20{,}000$ independent coefficient samples.
%The supervised FNO baseline is trained from macro-time rollout transitions, yielding $N_{\mathrm{traj}}(T_m-1)=100\times 200=20{,}000$ one-step training pairs.
%We thus match the number of training samples across paradigms.
     % SI

\end{document}